\documentclass[]{scrartcl}

\usepackage[utf8]{luainputenc}
\usepackage[USenglish]{babel}
\usepackage{csquotes}

\usepackage[a4paper,top=27mm,bottom=20mm,inner=25mm,outer=20mm]{geometry}

\usepackage[%
  backend=bibtex,bibencoding=ascii,
  style=numeric-comp,
  giveninits=true, uniquename=init, 
  natbib=true,
  url=true,
  doi=true,
  isbn=false,
  backref=false,
  maxnames=99,
  ]{biblatex}
\addbibresource{references.bib}

\usepackage{amsmath}
\allowdisplaybreaks
\numberwithin{equation}{section}
\usepackage{amssymb}
\usepackage{commath}
\usepackage{mathtools}
\usepackage{bbm}
\usepackage{nicefrac}
\usepackage{subdepth}
\usepackage{adjustbox}

\usepackage{siunitx}
\sisetup{separate-uncertainty=true, multi-part-units=single}

\usepackage{algorithm}
\usepackage{algpseudocode}

\usepackage{amsthm}
\usepackage{thmtools}
\usepackage{etoolbox}
\makeatletter
\patchcmd{\thmt@setheadstyle}
 {\bgroup\thmt@space}
 {\thmt@space}
 {}{}
\patchcmd{\thmt@setheadstyle}
 {\egroup\fi}
 {\fi}
 {}{}
\makeatother
\declaretheoremstyle[
  bodyfont=\normalfont\itshape,
  headformat=\NAME\ \NUMBER\NOTE,
]{myplain}
\declaretheoremstyle[
  headformat=\NAME\ \NUMBER\NOTE,
]{mydefinition}
\newcommand{\envqed}{{\lower-0.3ex\hbox{$\triangleleft$}}}
\declaretheorem[style=myplain,numberwithin=section]{theorem}

\declaretheorem[style=mydefinition,numberlike=theorem,qed=\envqed]{definition}
\declaretheorem[style=mydefinition,numberlike=theorem,qed=\envqed]{remark}
\declaretheorem[style=mydefinition,numberlike=theorem,qed=\envqed]{example}

\usepackage[plainpages=false,pdfpagelabels,hidelinks,unicode]{hyperref}

\usepackage{color}
\usepackage{graphicx}
\usepackage[small]{caption}
\usepackage{subcaption}

\begingroup\expandafter\expandafter\expandafter\endgroup
\expandafter\ifx\csname pdfsuppresswarningpagegroup\endcsname\relax
\else
  \pdfsuppresswarningpagegroup=1\relax
\fi

\usepackage{booktabs}
\usepackage{rotating}
\usepackage{multirow}

\usepackage{enumitem}

\usepackage{ifluatex}
\ifluatex
  \usepackage[no-math]{fontspec}
\else
  \usepackage[T1]{fontenc}
\fi
\usepackage{newpxtext,newpxmath}


\usepackage{xparse}

\let\epsilon\varepsilon
\let\phi\varphi
\let\rho\varrho

\usepackage{xspace}



\providecommand\R{}
\renewcommand{\R}{\mathbb{R}}


\NewDocumentCommand{\RK}{o m O{\the\numexpr#2-1\relax} m O{} O{} o}{%
  \IfValueTF{#1}{#1}{RK}%
  #2(#3)#4%
  \ifblank{#6}{}{\textsubscript{F}}%
  \ifblank{#5}{}{[#5]}%
  \IfValueT{#7}{#7}%
}

\renewcommand{\Re}{\operatorname{Re}}

\newcommand{\dx}{\Delta x}

\newcommand{\diag}{\operatorname{diag}}

\renewcommand{\vec}[1]{\pmb{#1}}

\newcommand{\xmin}{x_\mathrm{min}}
\newcommand{\xmax}{x_\mathrm{max}}

\newcommand{\tL}{\vec{t}_L}
\newcommand{\tR}{\vec{t}_R}

\newcommand{\SAT}{\vec{\mathrm{SAT}}}
\newcommand{\BTS}{\vec{\mathrm{BTs}}}

\newcommand{\Ngeo}{N_{\mathrm{geo}}}

\NewDocumentCommand{\opD}{m+g}{%
  \IfNoValueTF{#2}
    {D_{#1}}
    {D_{#1,#2}}%
}
\NewDocumentCommand{\opDsplit}{m+g}{%
  \IfNoValueTF{#2}
    {\widetilde{D}_{#1}}
    {\widetilde{D}_{#1,#2}}%
}
\NewDocumentCommand{\opM}{g}{%
  \IfNoValueTF{#1}
    {M}
    {M_{#1}}%
}
\NewDocumentCommand{\opQ}{g}{%
  \IfNoValueTF{#1}
    {Q}
    {Q_{#1}}%
}
\NewDocumentCommand{\opI}{g}{%
  \IfNoValueTF{#1}
    {I}
    {I_{#1}}%
}
\NewDocumentCommand{\opV}{g}{%
  \IfNoValueTF{#1}
    {V}
    {V_{#1}}%
}
\NewDocumentCommand{\opB}{g}{%
  \IfNoValueTF{#1}
    {B}
    {B_{#1}}%
}
\NewDocumentCommand{\opR}{g}{%
  \IfNoValueTF{#1}
    {R}
    {R_{#1}}%
}
\NewDocumentCommand{\opN}{m+g}{%
  \IfNoValueTF{#2}
    {N_{#1}}
    {N_{#1,#2}}%
}

\NewDocumentCommand{\fnum}{g}{%
  \IfNoValueTF{#1}
    {f^{\mathrm{num}}}
    {f^{\mathrm{num,#1}}}%
}
\NewDocumentCommand{\vecfnum}{g}{%
  \IfNoValueTF{#1}
    {\vec{f}^{\mathrm{num}}}
    {\vec{f}^{\mathrm{num,#1}}}%
}
\NewDocumentCommand{\vecfcorr}{g}{%
  \IfNoValueTF{#1}
    {\vec{f}^{\mathrm{corr}}}
    {\vec{f}^{\mathrm{corr,#1}}}%
}
\NewDocumentCommand{\fvol}{g}{%
  \IfNoValueTF{#1}
    {f^{\smash{\mathrm{vol}}}}
    {f^{\smash{\mathrm{vol,#1}}}}%
}


\newcommand{\orcid}[1]{ORCID:~\href{https://orcid.org/#1}{#1}}
\usepackage{authblk}

\newenvironment{keywords}{\par\textbf{Key words.}}{\par}
\newenvironment{AMS}{\par\textbf{AMS subject classification.}}{\par}

\title{On the robustness of high-order upwind summation-by-parts methods for nonlinear conservation laws}

\author[1]{Hendrik~Ranocha\thanks{\orcid{0000-0002-3456-2277}}}
\affil[1]{Institute of Mathematics, Johannes Gutenberg University Mainz, Germany}

\author[2]{Andrew~R.~Winters\thanks{\orcid{0000-0002-5902-1522}}}
\affil[2]{Department of Mathematics; Applied Mathematics, Linköping University, Sweden}

\author[3,4,5]{Michael~Schlottke-Lakemper\thanks{\orcid{0000-0002-3195-2536}}}
\affil[3]{High-Performance Scientific Computing, University of Augsburg, Germany}
\affil[4]{Applied and Computational Mathematics, RWTH Aachen University, Germany}
\affil[5]{High-Performance Computing Center Stuttgart (HLRS), University of Stuttgart, Germany}

\author[1,6]{Philipp~Öffner\thanks{\orcid{0000-0002-1367-1917}}}
\affil[6]{Institute of Mathematics, TU Clausthal, Germany}

\author[7]{Jan~Glaubitz\thanks{\orcid{0000-0002-3434-5563}}}
\affil[7]{Aeronautics and Astronautics, Massachusetts Institute of Technology, USA}

\author[8,9]{Gregor~J.~Gassner\thanks{\orcid{0000-0002-1752-1158}}}
\affil[8]{Department of Mathematics and Computer Science, University of Cologne, Germany}
\affil[9]{Center for Data and Simulation Science, University of Cologne, Germany}

\date{September 18, 2024} 

\makeatletter
\hypersetup{pdfauthor={Hendrik Ranocha et al.}} 
\hypersetup{pdftitle={On the robustness of high-order upwind summation-by-parts methods for nonlinear conservation laws}} 
\makeatother

\begin{document}

\maketitle

\begin{abstract}
\noindent
  We use the framework of upwind summation-by-parts (SBP) operators developed
by Mattsson (2017, \href{https://doi.org/10.1016/j.jcp.2017.01.042}{doi:10.1016/j.jcp.2017.01.042}) and
study different flux vector splittings in this context. To do so, we
introduce discontinuous-Galerkin-like interface terms for multi-block upwind
SBP methods applied to nonlinear conservation laws.
We investigate the behavior of the upwind SBP methods for flux vector splittings
of varying complexity on Cartesian as well as unstructured curvilinear multi-block meshes.
Moreover, we analyze the
local linear/energy stability of these methods following
Gassner, Svärd, and Hindenlang (2022, \href{https://doi.org/10.1007/s10915-021-01720-8}{doi:10.1007/s10915-021-01720-8}).
Finally, we investigate the robustness of upwind SBP methods for challenging
examples of shock-free flows of the compressible Euler equations such as
a Kelvin-Helmholtz instability and the inviscid Taylor-Green vortex.

\end{abstract}

\begin{keywords}
  summation-by-parts operators,
  conservation laws,
  finite difference methods,
  discontinuous Galerkin methods,
  flux vector splitting
\end{keywords}

\begin{AMS}
  65M06, 
  65M20, 
  65M70  
\end{AMS}

\section{Introduction}
\label{sec:introduction}

Stability and robustness are crucial properties of numerical methods for
conservation laws to obtain reliable simulations, in particular for
under-resolved flows. At the same time, high-order methods can be very
efficient and fit well to modern hardware. However, it is non-trivial to
ensure robustness of high-order methods without destroying their
high-order accuracy.

Over the last decade, entropy-based methods have emerged as a popular choice
to construct robust high-order methods in a wide range of applications.
Built from the seminal work of Tadmor
\cite{tadmor1987numerical,tadmor2003entropy}, high-order extensions
have been developed in \cite{lefloch2002fully,fisher2013high}. These
flux differencing schemes work well for under-resolved flows, e.g.,
\cite{gassner2016split,rojas2021robustness,chan2022entropy,klose2020assessing,sjogreen2018high}.
However, some doubts have been raised recently within the high-order
community by \citet{gassner2022stability}. In their article, the authors
demonstrated critical failures of high-order entropy-dissipative methods
for a conceptually simple setup of the 1D compressible Euler equations;
with constant velocity and pressure, these equations reduce to simple linear advection
of the density. Central schemes without any entropy properties perform well
in this case but crash for demanding simulations of under-resolved flows
such as the inviscid Taylor-Green vortex. In contrast, entropy-stable flux
differencing methods work well for the Taylor-Green vortex but fail for the
apparently simple advection example.

Failures due to positivity issues can be fixed by adding
invariant domain preserving techniques, e.g.,
\cite{rueda2023monolitic,pazner2021sparse,maier2021efficient,guermond2022implementation}.
However, it is desirable to combine such shock-capturing and invariant
domain preserving approaches with a good baseline scheme such that the
amount of additional dissipation can be kept low \cite{rueda2021subcell}.
Thus, we are interested in high-order baseline schemes that come already
with some built-in dissipation everywhere, not only at element interfaces
as typical in discontinuous Galerkin (DG) methods. At the same time, we would
like to avoid having additional parameters in the schemes that need to be
tuned manually.

Many high-order methods with some provable stability properties can be
obtained in the general framework of summation-by-parts (SBP) operators.
SBP operators were originally developed for finite difference methods
\cite{kreiss1974finite,strand1994summation}.
They are the basis of entropy-stable flux differencing methods by mimicking
integration by parts discretely. Many common numerical methods can be
formulated using SBP operators, e.g.,
finite volume methods \cite{nordstrom2001finite,nordstrom2003finite},
continuous Galerkin methods \cite{hicken2016multidimensional,hicken2020entropy,abgrall2020analysisI,abgrall2023analysis},
DG methods \cite{gassner2013skew,carpenter2014entropy,chan2018discretely},
and flux reconstruction methods \cite{huynh2007flux,ranocha2016summation}.
Further information and background material on SBP operators is collected
in the review articles \cite{svard2014review,fernandez2014review}.

Classical SBP operators can be used to design numerical schemes that are
provably stable. Typically, SBP methods are based on central-type
discretizations in the interior and weak imposition of boundary data
using simultaneous approximation terms (SATs)
\cite{carpenter1994time,carpenter1999stable}
that introduce some dissipation.
In a multi-block finite difference or DG setting,
such SATs are also used to couple the blocks/elements weakly and introduce
additional dissipation --- but only at interfaces, not in the interior of
the block/elements. To obtain additional dissipation everywhere, artificial
dissipation operators can be used \cite{mattsson2004stable}. These operators
can be combined with a user-chosen amount of dissipation and it may be
non-trivial to choose an appropriate amount of dissipation.

Combining classical SBP operators and artificial dissipation can be
interpreted as upwinding \cite{svard2005steady,mattsson2007high}.
\citet{mattsson2017diagonal} introduced a general definition of upwind
SBP operators and constructed a range of schemes with good numerical
properties, resulting in a parameter-free combination of central-type
SBP operators and artificial dissipation. These upwind SBP operators have
been used successfully for a range of applications such as the shallow
water equations \cite{lundgren2020efficient},
atmospheric flows \cite{rydin2018high},
and scalar conservation laws \cite{stiernstrom2021residual}.
They have also been extended to staggered grids in
\cite{mattsson2018compatible}. Their relations to DG methods have been
discussed in \cite{ranocha2021broad,ortleb2023stability}.

To apply upwind SBP operators to nonlinear conservation laws, a flux
vector splitting is required \cite{mattsson2017diagonal}. Across the literature
\cite{svard2005steady,mattsson2007high,mattsson2017diagonal,lundgren2020efficient,rydin2018high,stiernstrom2021residual}
the numerical testing is predominantly done with
Lax-Friedrichs type splittings.
For many numerical schemes, such Lax-Friedrichs type splittings are not
ideal. As stated by \citet[Remark~4.2]{stiernstrom2021residual}, many other
flux vector splittings are available but have not been studied in detail
with upwind SBP operators so far. One of the goals of this article is to
fill this gap and investigate the impact of different flux vector splittings
on robustness for challenging examples on Cartesian and curvilinear meshes.

To do so, we first review upwind SBP operators \cite{mattsson2017diagonal}
and classical flux vector splittings \cite[Chapter~8]{toro2009riemann} in
Section~\ref{sec:basics}.
These flux vector splitting methods have been widely developed and used
in the last century
\cite{steger1979flux,vanleer1982flux,hanel1987accuracy,liou1991high,coirier1991numerical,buning1982solution}
but were abandoned in favor of other techniques due to their significant
amount of numerical dissipation \cite{vanleer1991flux}. We will see that
the combination of flux vector splitting techniques with high-order
difference operators does not lead to an excessive amount of artificial
dissipation.

Next, we formulate high-order upwind SBP methods for nonlinear problems
in Section~\ref{sec:formulations} based on the seminal works of Mattsson
and collaborators \cite{mattsson2017diagonal,svard2005steady,mattsson2007high,lundgren2020efficient,rydin2018high,stiernstrom2021residual}. To enable an investigation across a range of different flux
vector splittings in multi-block finite difference methods, we need to
introduce appropriate SATs. To do so, we start with a classical upwind
SBP formulation and introduce interface terms as in DG
methods --- using numerical fluxes resulting from the flux vector splitting.
We then discuss the relation of this formulation to the construction of
global upwind SBP operators as done in \cite{ranocha2021broad}.

In the final part of Section~\ref{sec:formulations}, we consider the
upwind SBP methods on unstructured curvilinear multi-block meshes.
The formulation in generalized coordinates reveals a subtle interplay between
the finite difference operator and the particular flux vector splitting.
Moreover, we demonstrate that these subtleties are not an issue for Lax-Friedrichs
type splittings; however, they are present for more sophisticated splitting techniques.

Afterwards, we follow Gassner, Svärd, and Hindenlang
\cite{gassner2022stability} and analyze the local linear/energy stability
properties of upwind SBP methods in Section~\ref{sec:stability}.
In particular, we prove local linear/energy stability for Burgers' equation
in the setting where \citet{gassner2022stability} observed stability issues
for entropy-stable methods based on classical SBP operators.

In Section~\ref{sec:experiments}, we investigate the behavior of upwind
SBP methods with different flux vector splittings numerically.
We begin with 1D convergence tests, verify the local linear/energy stability
results, and then proceed to 2D and 3D simulations of under-resolved flows
on Cartesian meshes.
In particular, we consider shock-free setups for the compressible Euler
equations and study the robustness for two challenging setups: a
Kelvin-Helmholtz instability and the inviscid Taylor-Green vortex.
We further study the convergence and free-stream preservation properties
on unstructured curvilinear meshes with different flux vector splittings.
Finally, we summarize our findings and provide an outlook on further
research in Section~\ref{sec:summary}.

\section{Review of upwind SBP operators and flux vector splitting}
\label{sec:basics}

Consider a hyperbolic conservation law
\begin{equation}
\label{eq:HCL-1D}
  \partial_t u(t, x) + \partial_x f\bigl( u(t, x) \bigr) = 0,
  \qquad t \in (0, T), x \in (\xmin, \xmax),
\end{equation}
with conserved variable $u$ and flux $f$ in one space dimension, equipped
with appropriate initial and boundary conditions.
For now, we concentrate on the 1D setting to describe the overall methodologies.
Extension of the method to multiple space dimensions is done using a tensor product structure.
We delay a detailed discussion of the continuous and discrete formulations in generalized
curvilinear coordinates to Section~\ref{sec:curvilinear}.

In this section, we review classical flux vector splitting techniques,
the basic idea of upwind SBP methods, and collect some useful properties of
upwind SBP operators for reference. All these concepts and results are
well-known in the literature, but we collect them here to make the article
self-contained.

\subsection{Flux vector splitting}

The classical flux vector splitting approach \cite[Chapter~8]{toro2009riemann}
to create (semi-)discretizations of the conservation law \eqref{eq:HCL-1D}
begins with an appropriate splitting of the flux $f$ such that
\begin{equation}
  \label{eq:flux_vec_split}
  f(u) = f^-(u) + f^+(u),
\end{equation}
where the eigenvalues $\lambda^{\pm}_i$ of the Jacobians
$J^{\pm} = \partial_u f^{\pm}$ satisfy
\begin{equation}
  \forall i\colon \quad \lambda^-_i \le 0, \; \lambda^+_i \ge 0.
\end{equation}
There is a great deal of freedom in the construction of a flux vector splitting \eqref{eq:flux_vec_split}
to create an upwind scheme. The design of $f^-(u)$ and $f^+(u)$ typically relies on
the mathematically sound characteristic theory for hyperbolic partial differential equations.
Depending on how one treats the different characteristics, for instance separating the
convective and pressure components of the compressible Euler equations, leads to a wide
variety of flux vector splittings, e.g.,~\cite{vanleer1982flux,hanel1987accuracy,liou1991high,liou1996sequel,zha1993numerical}.
Because the flux vector splitting separates the upwind directions with which
solution information propagates, the resulting scheme does not require the (approximate) solution
of a Riemann problem.
This makes flux vector splitting based algorithms particularly attractive due to their simplicity and ability
to approximate shock waves.
To demonstrate this simplicity, consider a classical first-order
finite volume method of the form
\begin{equation}
  \partial_t \vec{u}_i + \frac{1}{\dx} \left(
    \fnum(\vec{u}_{i}, \vec{u}_{i+1}) - \fnum(\vec{u}_{i-1}, \vec{u}_{i})
  \right)
  = 0
\end{equation}
with numerical flux $\fnum$. In the flux vector splitting approach, the
numerical flux is chosen as
\begin{equation}
  \fnum(u_l, u_r) = f^+(u_l) + f^-(u_r).
\end{equation}
Thus, the chosen splitting determines the scheme completely.

\begin{example}
\label{ex:global-Lax-Friedrichs}
  The global Lax-Friedrichs splitting requires a global upper bound
  $\lambda$ on the possible wave speeds and uses
  \begin{equation}
    f^\pm(u) = \frac{1}{2} \left( f(u) \pm \lambda u \right).
  \end{equation}
  This results in the numerical flux
  \begin{equation}
    \fnum(u_l, u_r)
    =
    f^+(u_l) + f^-(u_r)
    =
    \frac{1}{2} \left( f(u_l) + f(u_r) \right)
    - \frac{\lambda}{2} (u_r - u_l).
  \end{equation}
  This splitting has predominantly been used in previous works on upwind SBP operators,
  e.g., \cite{mattsson2017diagonal,stiernstrom2021residual,lundgren2020efficient}.
\end{example}

Next, we present some examples for the 1D compressible Euler equations
\begin{equation}
  \partial_t \begin{pmatrix} \rho \\ \rho v \\ \rho e \end{pmatrix}
  + \partial_x \begin{pmatrix} \rho v \\ \rho v^2 + p \\ (\rho e + p) v \end{pmatrix}
  = 0
\end{equation}
of an ideal gas with density $\rho$, velocity $v$, total energy density
$\rho e$, and pressure
\begin{equation}
  p = (\gamma - 1) \left( \rho e - \frac{1}{2} \rho v^2 \right),
\end{equation}
where the ratio of specific heats is usually chosen as $\gamma = 1.4$.
To the best of our knowledge, the splittings described in the following
examples have not been combined
with upwind SBP operators in the existing literature
\cite{svard2005steady,mattsson2007high,mattsson2017diagonal,lundgren2020efficient,rydin2018high,stiernstrom2021residual} or only in less detail.

\begin{example}
\label{ex:Steger-Warming}
  To describe the Steger-Warming splitting \cite{steger1979flux}, we use
  the standard notation
  \begin{equation}
    \lambda_i^\pm = \frac{\lambda_i \pm |\lambda_i|}{2}
  \end{equation}
  for the positive/negative part of an eigenvalue $\lambda_i$.
  The wave speeds of the 1D Euler equations are
  \begin{equation}
    \lambda_1 = v - a,
    \quad
    \lambda_2 = v,
    \quad
    \lambda_3 = v + a,
  \end{equation}
  where the speed of sound is $a = \sqrt{\gamma p / \rho}$. Then, the flux
  splitting of Steger and Warming is given by
  \begin{equation}
    f^\pm
    =
    \frac{\rho}{2 \gamma}
    \begin{pmatrix}
      \lambda_1^\pm + 2 (\gamma - 1) \lambda_2^\pm + \lambda_3^\pm \\
      (v - a) \lambda_1^\pm + 2 (\gamma - 1) v \lambda_2^\pm + (v - a) \lambda_3 ^\pm \\
      (H - v a) \lambda_1^\pm + (\gamma - 1) v^2 \lambda_2^\pm + (H + v a) \lambda_3^\pm
    \end{pmatrix},
  \end{equation}
  where $H = (\rho e + p) / \rho = v^2 / 2 + a^2 / (\gamma - 1)$
  is the enthalpy, see also \cite[Section~8.4.2]{toro2009riemann}.
\end{example}

\begin{example}
\label{ex:van-Leer-Hanel}
  Next, we describe the van Leer-Hänel splitting
  \cite{vanleer1982flux,hanel1987accuracy,liou1991high}
  based on a splitting of van Leer with a modification of the energy flux
  proposed by Hänel et al.\ and the ``p4'' splitting of the pressure proposed
  by Liou and Steffen.
  First, we introduce the signed Mach number $M = v / a$ and the pressure
  splitting
  \begin{equation}
    p^\pm = \frac{1 \pm \gamma M}{2} p.
  \end{equation}
  The fluxes are given by
  \begin{equation}
    f^\pm
    =
    \pm \frac{\rho a (M \pm 1)^2}{4}
    \begin{pmatrix}
      1 \\
      v \\
      H
    \end{pmatrix}
    +
    \begin{pmatrix}
      0 \\
      p^\pm \\
      0
    \end{pmatrix},
  \end{equation}
  where $H = (\rho e + p) / \rho = v^2 / 2 + a^2 / (\gamma - 1)$
  is again the enthalpy.
\end{example}

\subsection{Upwind SBP operators}

In this article, we focus on a collocation setting as in classical finite
difference methods. Thus, we consider a grid $\vec{x} = (\vec{x}_i)_{i=1}^N$
with nodes $\vec{x}_i$ and use pointwise approximations such as
$\vec{u}_i = u(\vec{x}_i)$ and $\vec{1} = (1, \dots, 1)^T$. We also assume that
the grid includes the boundary nodes of the domain, i.e.,
\begin{equation}
  \vec{x}_1 = \xmin, \qquad \vec{x}_N = \xmax.
\end{equation}
Then, classical SBP operators are constructed to mimic integration-by-parts,
cf. \cite{svard2014review,fernandez2014review}.

\begin{definition}
  An \emph{SBP operator} on the domain $[\xmin, \xmax]$ consists of
  a grid $\vec{x}$,
  a symmetric and positive definite mass/norm matrix $M$ satisfying
  $\vec{1}^T M \vec{1} = \xmax - \xmin$, and
  a consistent derivative operator $D$ such that
  \begin{equation}
  \label{eq:SBP}
    M D + D^T M = \tR \tR^T - \tL \tL^T,
  \end{equation}
  where $\tR^T = (0, \dots, 0, 1)$ and $\tL = (1, 0, \dots, 0)^T$.
  It is called {diagonal-norm operator} if $M$ is diagonal.
\end{definition}

We often identify an SBP operator with the derivative operator $D$ and
assume that the remaining parts are clear from the context.
Since the boundary nodes are included, \eqref{eq:SBP} guarantees that the
discrete operators mimic integration-by-parts as
\begin{equation}
\begin{array}{*3{>{\displaystyle}c}}
  \underbrace{
    \vec{u}^T M D \vec{v}
    + \vec{u}^T D^T M \vec{v}
  }
  & = &
  \underbrace{
    \vec{u}^T \tR \tR^T \vec{v} - \vec{u}^T \tL \tL^T \vec{v},
  }
  \\
  \rotatebox{90}{$\!\approx\;$}
  &&
  \rotatebox{90}{$\!\!\approx\;$}
  \\
  \overbrace{
    \int_{\xmin}^{\xmax} u \, (\partial_x v)
    + \int_{\xmin}^{\xmax} (\partial_x u) \, v
  }
  & = &
  \overbrace{
    u(\xmax) v(\xmax) - u(\xmin) v(\xmin)
  }.
\end{array}
\end{equation}

Upwind SBP operators were introduced by Mattsson~\cite{mattsson2017diagonal}. The
basic idea is to introduce two derivative operators $D_\pm$ that mimic
integration-by-parts together and are compatible in the sense that their difference
is negative semidefinite, which allows to introduce artificial dissipation.
\begin{definition}
  An \emph{upwind SBP operator} on the domain $[\xmin, \xmax]$ consists of
  a grid $\vec{x}$,
  a symmetric and positive definite mass/norm matrix $M$ satisfying
  $\vec{1}^T M \vec{1} = \xmax - \xmin$, and
  two consistent derivative operators $D_\pm$ such that
  \begin{equation}
  \label{eq:upwind-SBP}
    M D_+ + D_-^T M = \tR \tR^T - \tL \tL^T,
    \qquad
    M (D_+ - D_-) \text{ is negative semidefinite},
  \end{equation}
  where again $\tR^T = (0, \dots, 0, 1)$ and $\tL = (1, 0, \dots, 0)^T$.
  It is called {diagonal-norm operator} if $M$ is diagonal.
\end{definition}

For convenience, we also identify an upwind SBP operator simply with the derivative matrices $D_\pm$.
In matrix form, the upwind SBP operators derived by Mattsson
\cite{mattsson2017diagonal} are constructed such that $D_+$ is biased toward
the upper-triangular part, i.e., it has more non-zero entries above the diagonal.
Similarly, $D_-$ is biased toward the lower-triangular part.

\begin{example}
\label{ex:upwind-2nd-order}
  The second-order accurate upwind operators of \cite{mattsson2017diagonal} are
  given by
  \begin{equation}
    D_+ = \frac{1}{\dx}
    \begin{pmatrix}
      -3 & 5 & -2 \\
      -1/5 & -1 & 8/5 & -2/5 \\
      && -3/2 & 2 & -1/2 \\
      &&& \ddots & \ddots & \ddots \\
      &&&& 3/2 & 2 & -1/2 \\
      &&&&& -1 & 1 \\
      &&&&& -1 & 1
    \end{pmatrix},
  \end{equation}
  \begin{equation}
    D_- = \frac{1}{\dx}
    \begin{pmatrix}
      -1 & 1 \\
      -1 & 1 \\
      1/2 & -2 & 3/2 \\
      & 1/2 & -2 & 3/2 \\
      && \ddots & \ddots & \ddots \\
      &&& 1/2 & -2 & 3/2 \\
      &&&& 2/5 & -8/5 & 1 & 1/5 \\
      &&&&& 2 & -5 & 3
    \end{pmatrix},
  \end{equation}
  and $M = \dx \diag(1/4, 5/4, 1, \dots, 1, 5/4, 1/4)$.
\end{example}

Upwind SBP operators are constructed to create provably stable semidiscretizations
of linear transport problems as already described in \cite{mattsson2017diagonal}. For completeness and as an example,
consider the linear advection equation
\begin{equation}
\begin{aligned}
  \partial_t u(t, x) + \partial_x u(t, x) &= 0, && t \in (0, T), x \in (\xmin, \xmax),
  \\
  u(0, x) &= u^0(x), && x \in [\xmin, \xmax],
  \\
  u(t, \xmin) &= g_L(t), && t \in [0, T].
\end{aligned}
\end{equation}
Since the transport happens from left to right, we choose the left-biased
upwind operator $D_-$ such that solution information from the correct
characteristic direction is used and obtain a stable semidiscretization
\begin{equation}
\label{eq:linear-advection-upwind-SBP}
  \partial_t \vec{u} + D_- \vec{u} = M^{-1} \tL (g_L - \tL^T \vec{u}).
\end{equation}
This semidiscretization is globally conservative, since
\begin{equation}
\begin{aligned}
  \partial_t (\vec{1}^T M \vec{u})
  &=
  \vec{1}^T M \partial_t \vec{u}
  =
  - \vec{1}^T M D_- \vec{u}
  + \vec{1}^T \tL (g_L - \tL^T \vec{u})
  \\
  &=
  \vec{1}^T D_+^T M \vec{u}
  - \vec{1}^T (\tR \tR^T - \tL \tL^T) \vec{u}
  + \vec{1}^T \tL (g_L - \tL^T \vec{u})
  =
  g_L - \tR^T \vec{u},
\end{aligned}
\end{equation}
where we have used the upwind SBP property \eqref{eq:upwind-SBP} and consistency
of the derivative operator. Furthermore, the semidiscretization
\eqref{eq:linear-advection-upwind-SBP} is energy-stable since
\begin{equation}
\begin{aligned}
  \partial_t \| \vec{u} \|_M^2
  &=
  2 \vec{u}^T M \partial_t \vec{u}
  =
  - 2 \vec{u}^T M D_- \vec{u}
  + 2 \vec{u}^T \tL (g_L - \tL^T \vec{u})
  \\
  &=
  - \vec{u}^T M D_- \vec{u}
  + \vec{u}^T D_+^T M \vec{u}
  - \vec{u}^T (\tR \tR^T - \tL \tL^T) \vec{u}
  + 2 \vec{u}^T \tL (g_L - \tL^T \vec{u})
  \\
  &\le
  2 (\tL^T \vec{u}) g_L
  - (\tL^T \vec{u})^2
  - (\tR^T \vec{u})^2
  =
  g_L^2 - (\tR^T \vec{u})^2 - (g_L - \tL^T \vec{u})^2,
\end{aligned}
\end{equation}
mimicking the estimate
\begin{equation}
  \partial_t \|u(t)\|_{L^2}^2
  =
  g_L(t)^2 - u(t, \xmax)^2
\end{equation}
up to additional artificial dissipation due to the upwind operators and the
weak imposition of boundary data.

\subsection{Some useful properties of upwind SBP operators}

As described in \cite{mattsson2017diagonal}, upwind SBP operators can be
interpreted as classical SBP operators plus artificial dissipation in the
context of the linear advection equation. Indeed,
\begin{equation}
\label{eq:upwindDs}
\begin{aligned}
  D_+ &= \frac{1}{2} (D_- + D_+) + \frac{1}{2} (D_+ - D_-),
  \\
  D_- &= \frac{1}{2} (D_- + D_+) - \frac{1}{2} (D_+ - D_-).
\end{aligned}
\end{equation}
The average of the upwind operators is a classical SBP operator since
\cite{mattsson2017diagonal}
\begin{equation}
  M (D_- + D_+) + (D_- + D_+)^T M
  =
  (M D_+ + D_-^T M) + (M D_- + D_+^T M)
  =
  2 (\tR \tR^T - \tL \tL^T).
\end{equation}
The difference of the upwind SBP operators introduces artificial dissipation
for the linear advection equation
when multiplied by the mass matrix $M$ by construction. Thus, the upwind SBP
discretization \eqref{eq:linear-advection-upwind-SBP} can be written as
\cite{mattsson2017diagonal}
\begin{equation}
  \partial_t \vec{u}
  \underbrace{+ \frac{D_- + D_+}{2} \vec{u}}_{\text{central}}
  \underbrace{- \frac{D_+ - D_-}{2} \vec{u}}_{\text{dissipation}}
  = M^{-1} \tL (g_L - \tL^T \vec{u}).
\end{equation}
This is the form of a central SBP discretization plus artificial dissipation
for linear advection. For general nonlinear problems, we can still use the
negative semidefiniteness of the difference of the operators to introduce
artificial dissipation, but we need proper upwinding as discussed in
Section~\ref{sec:formulations}.

\subsection{Upwind SBP operators in periodic domains}

In periodic domains, we require boundary terms to vanish, resulting in the
following definitions \cite{ranocha2021broad}.

\begin{definition}
  A \emph{periodic SBP operator} on the domain $[\xmin, \xmax]$ consists of
  a grid $\vec{x}$,
  a symmetric and positive definite mass/norm matrix $M$ satisfying
  $\vec{1}^T M \vec{1} = \xmax - \xmin$, and
  a consistent derivative operator $D$ such that
  \begin{equation}
  \label{eq:SBP-periodic}
    M D + D^T M = 0.
  \end{equation}
  It is called {diagonal-norm operator} if $M$ is diagonal.
\end{definition}

\begin{definition}
  A \emph{periodic upwind SBP operator} on the domain $[\xmin, \xmax]$ consists of
  a grid $\vec{x}$,
  a symmetric and positive definite mass/norm matrix $M$ satisfying
  $\vec{1}^T M \vec{1} = \xmax - \xmin$, and
  two consistent derivative operators $D_\pm$ such that
  \begin{equation}
  \label{eq:upwind-SBP-periodic}
    M D_+ + D_-^T M = 0,
    \qquad
    M (D_+ - D_-) \text{ is negative semidefinite}.
  \end{equation}
  It is called {diagonal-norm operator} if $M$ is diagonal.
\end{definition}

An upwind SBP discretization of the linear advection equation
\begin{equation}
\begin{aligned}
  \partial_t u(t, x) + \partial_x u(t, x) &= 0, && t \in (0, T), x \in (\xmin, \xmax),
  \\
  u(0, x) &= u^0(x), && x \in [\xmin, \xmax],
\end{aligned}
\end{equation}
with periodic boundary conditions is
\begin{equation}
  \partial_t \vec{u} + D_- \vec{u} = \vec{0}.
\end{equation}
Following the same steps as in the case of a bounded domain, we see that it
is conservative, i.e.,
\begin{equation}
  \partial_t (\vec{1}^T M \vec{u}) = 0,
\end{equation}
and energy-stable, i.e.,
\begin{equation}
  \partial_t \| \vec{u} \|_M^2 \le 0.
\end{equation}

\begin{example}
  The interior stencils of the second-order accurate upwind operators of
  \cite{mattsson2017diagonal} shown in Example~\ref{ex:upwind-2nd-order}
  yield periodic upwind operators. Specifically, we have
  \begin{equation}
    D_+ = \frac{1}{\dx}
    \begin{pmatrix}
      -3/2 & 2 & -1/2 \\
      & -3/2 & 2 & -1/2 \\
      && -3/2 & 2 & -1/2 \\
      &&& \ddots & \ddots & \ddots \\
      &&&& 3/2 & 2 & -1/2 \\
      -1/2 &&&&& -3/2 & 2 \\
      2 & -1/2 &&&& & -3/2
    \end{pmatrix},
  \end{equation}
  \begin{equation}
    D_- = \frac{1}{\dx}
    \begin{pmatrix}
      3/2 &&&&&& 1/2 & -2 \\
      -2 & 3/2 &&&&&& 1/2 \\
      1/2 & -2 & 3/2 \\
      & 1/2 & -2 & 3/2 \\
      && \ddots & \ddots & \ddots \\
      &&& 1/2 & -2 & 3/2 \\
      &&&& 1/2 & -2 & 3/2 \\
      &&&&& 1/2 & -2 & 3/2
    \end{pmatrix},
  \end{equation}
  and $M = \dx \diag(1, \dots, 1)$.
\end{example}

\section{Formulation of upwind SBP methods for nonlinear problems}
\label{sec:formulations}

Following earlier work on upwind SBP operators \cite{mattsson2017diagonal},
we first apply the flux vector splitting \eqref{eq:flux_vec_split} and
rewrite the hyperbolic conservation law in one space dimension \eqref{eq:HCL-1D} as
\begin{equation}
  \partial_t u + \partial_x f^-(u) + \partial_x f^+(u) = 0.
\end{equation}
Next, we discretize the conservation law in space by using
upwind SBP operators as
\begin{equation}
\label{eq:HCL-1D-upwind-global-periodic}
  \partial_t \vec{u} + D_+ \vec{f^-} + D_- \vec{f^+} = \vec{0}.
\end{equation}
These formulations are well-known in the literature, e.g.,
\cite{mattsson2017diagonal,stiernstrom2021residual,lundgren2020efficient,mattsson2007high,svard2005steady}.
To couple multiple blocks of upwind SBP operators, we introduce
interface terms as in discontinuous Galerkin methods in the following.
On each element, we will use the semidiscretzation
\eqref{eq:HCL-1D-upwind-global-periodic} as a baseline and add additional
terms to couple the elements weakly at the interfaces.
Such a construction has been used for central-type SBP operators in
several works, e.g., \cite{gassner2013skew}.

After describing and contextualizing the method in one space dimension, we
describe and analyze the method in two-dimensional curvilinear coordinates in
Section~\ref{sec:curvilinear}.

\begin{remark}
  The indices $\pm$ of the upwind operators and the fluxes do not match.
  This is due to historical reasons since we want to keep backwards
  compatibility with both the notation of flux vector splitting methods
  \cite[Chapter~8]{toro2009riemann}
  and upwind SBP operators as introduced in \cite{mattsson2017diagonal}.
\end{remark}

\subsection{Local upwind SBP formulation with SATs and numerical fluxes}
\label{sec:local-upwind}

On each element, we consider a discretization of the form
\begin{equation}
\label{eq:HCL-1D-upwind-SAT}
  \partial_t \vec{u} + D_+ \vec{f^-} + D_- \vec{f^+} = \SAT
\end{equation}
with a simultaneous approximation term $\;\SAT$. To motivate the construction
of the SAT, we consider the upwind SBP discretization \eqref{eq:HCL-1D-upwind-SAT}
with the global Lax-Friedrichs flux vector splitting, resulting in
\begin{equation}
  \partial_t \vec{u}
  + \frac{1}{2} D_+ (\vec{f} + \lambda \vec{u})
  + \frac{1}{2} D_- (\vec{f} - \lambda \vec{u}) = \SAT.
\end{equation}
We can formulate this in the central SBP plus dissipation form as
\begin{equation}
  \partial_t \vec{u}
  + \frac{1}{2} (D_- + D_+) \vec{f}
  + \frac{\lambda}{2} (D_+ - D_-) \vec{u}
  = \SAT.
\end{equation}
The second term is the central SBP discretization and the third term is the
artificial dissipation term built into the upwind operators. Thus, we select
the standard SAT for a central SBP discretization, i.e.,
\begin{equation}
  \SAT
  =
  - M^{-1} \tR (\fnum_R - \tR^T \vec{f})
  + M^{-1} \tL (\fnum_L - \tL^T \vec{f}),
\end{equation}
where $\fnum_{L/R}$ is the numerical flux at the left/right interface of the
element. To clarify this notation, we use an upper index to denote the element.
Thus, $\vec{u}^k$ is the numerical solution in element $k$. Then, the SAT
in element $k$ becomes
\begin{equation}
  \SAT^k
  =
  - M^{-1} \tR \left( \fnum(\vec{u}^k_R, \vec{u}^{k+1}_L) - \vec{f}^k_R \right)
  + M^{-1} \tL \left( \fnum(\vec{u}^{k-1}_R, \vec{u}^k_L) - \vec{f}^k_L \right),
\end{equation}
where we have abbreviated the left/right interface value as
$\vec{u}_{L/R} = \vec{t}_{L/R}^T \vec{u}$.

There are many classical numerical fluxes that we can use for $\fnum$.
Next, we use the flux vector splitting to design the numerical fluxes.
We demonstrate this procedure first for the right interface.
Using the same splitting for the numerical flux and the physical flux yields
\begin{equation}
\begin{aligned}
  \label{eq:fnumR}
  \fnum(\vec{u}^k_R, \vec{u}^{k+1}_L) - \vec{f}^k_R
  &=
  \left( f^+(\vec{u}^k_R) + f^-(\vec{u}^{k+1}_L) \right)
  - \left( f^+(\vec{u}^k_R) + f^-(\vec{u}^k_R) \right)
  \\
  &=
  f^-(\vec{u}^{k+1}_L) - f^-(\vec{u}^k_R).
\end{aligned}
\end{equation}
Similarly, we get
\begin{equation}
  \label{eq:fnumL}
\begin{aligned}
  \fnum(\vec{u}^{k-1}_R, \vec{u}^k_L) - \vec{f}^k_L
  &=
  \left( f^+(\vec{u}^{k-1}_R) + f^-(\vec{u}^k_L) \right)
  - \left( f^+(\vec{u}^k_L) + f^-(\vec{u}^k_L) \right)
  \\
  &=
  f^+(\vec{u}^{k-1}_R) - f^+(\vec{u}^k_L).
\end{aligned}
\end{equation}
for the left interface. Thus, the SAT becomes
\begin{equation}
  \SAT^k
  =
  - M^{-1} \tR \left( f^-(\vec{u}^{k+1}_L) - f^-(\vec{u}^k_R) \right)
  + M^{-1} \tL \left( f^+(\vec{u}^{k-1}_R) - f^+(\vec{u}^k_L) \right).
\end{equation}
To sum up, we arrive at the upwind SBP discretization
\begin{equation}
  \partial_t \vec{u} + D_+ \vec{f^-} + D_- \vec{f^+} = \SAT^k,
\end{equation}
where the simultaneous approximation term can be expressed using
(general) numerical fluxes as
\begin{equation}
\label{eq:SAT-fnum}
  \SAT^k
  =
  - M^{-1} \tR \left( \fnum(\vec{u}^k_R, \vec{u}^{k+1}_L) - \vec{f}^k_R \right)
  + M^{-1} \tL \left( \fnum(\vec{u}^{k-1}_R, \vec{u}^k_L) - \vec{f}^k_L \right)
\end{equation}
or specifically using the upwind fluxes as
\begin{equation}
\label{eq:SAT-upwind}
  \SAT^k
  =
  - M^{-1} \tR \left( f^-(\vec{u}^{k+1}_L) - f^-(\vec{u}^k_R) \right)
  + M^{-1} \tL \left( f^+(\vec{u}^{k-1}_R) - f^+(\vec{u}^k_L) \right).
\end{equation}

Finally, we can integrate the semidiscretization in time using any suitable
time integration scheme, e.g., Runge-Kutta methods.

\begin{remark}
  If the corresponding upwind flux is used at interfaces as in
  \eqref{eq:SAT-upwind},
  the final discretization of the hyperbolic conservation law \eqref{eq:HCL-1D}
  is actually agnostic to the flux and does not require the solution to a
  Riemann problem. All the physics is contained in the particular flux
  splitting one considers.
\end{remark}

\subsection{Global upwind SBP formulation}
\label{sec:global-upwind}

There is another formulation of the method above that is useful as an
interpretation. Instead of first introducing an upwind SBP discretization
on each element and coupling terms in a second step, we can directly
couple the element-local upwind operators to obtain a global upwind operator
as described in \cite{ranocha2021broad}. Here, we just concentrate on a
coupling as in DG methods.

\begin{theorem}[Theorem~2.2 of \cite{ranocha2021broad}]
\label{thm:upwind-DG}
  Consider two upwind SBP operators $D_{\pm, l/r}$
  on the grids
  $\vec{x}_{l/r}$ with $\vec{x}_{N_l,l} = \vec{x}_{1,r}$. Then,
  \begin{equation}
  \label{eq:D1-upwind-DG}
  \begin{gathered}
    D_{+} =
    \begin{pmatrix}
      D_{+,l} - M_{l}^{-1} \vec{t}_{R,l} \vec{t}_{R,l}^T &
      M_{l}^{-1} \vec{t}_{R,l} \vec{t}_{L,r}^T \\
      0 &
      D_{+,r}
    \end{pmatrix},
    \quad
    D_{-} =
    \begin{pmatrix}
      D_{-,l} &
      0 \\
      - M_{r}^{-1} \vec{t}_{L,r} \vec{t}_{R,l}^T &
      D_{-,r} + M_{r}^{-1} \vec{t}_{L,r} \vec{t}_{L,r}^T
    \end{pmatrix},
    \\
    M =
    \begin{pmatrix}
      M_{l} & 0 \\
      0 & M_{r}
    \end{pmatrix},
  \end{gathered}
  \end{equation}
  yield upwind SBP operators on the joint grid $\vec{x} =
  (\vec{x}_{1,l}, \dots, \vec{x}_{N_l,l}, \vec{x}_{1,r}, \dots, \vec{x}_{N_r,r})^T$
  with $N = N_l + N_r$ nodes.
  These global operators have the same order of accuracy as the less accurate one
  of the given local operators.
\end{theorem}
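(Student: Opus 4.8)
The plan is to verify directly that the triple $(\vec{x}, M, D_\pm)$ defined in \eqref{eq:D1-upwind-DG} meets every clause of the definition of an upwind SBP operator on the joint grid, and then to read off the accuracy statement as a byproduct. Concretely I would check, in order: (i) that $M$ is symmetric positive definite and satisfies the mass normalization $\vec{1}^T M \vec{1} = \xmax - \xmin$; (ii) that $D_+$ and $D_-$ are consistent, i.e. annihilate the constant vector; (iii) that $M D_+ + D_-^T M = \tR \tR^T - \tL \tL^T$ with the boundary vectors of the \emph{joint} grid; and (iv) that $M(D_+ - D_-)$ is negative semidefinite. None of these is deep; the content is block bookkeeping, and the only clause where one must be a little careful is (iv).

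For (i), $M$ is block diagonal with symmetric positive definite blocks, hence symmetric positive definite, and $\vec{1}^T M \vec{1} = \vec{1}^T M_l \vec{1} + \vec{1}^T M_r \vec{1} = (\vec{x}_{N_l,l} - \vec{x}_{1,l}) + (\vec{x}_{N_r,r} - \vec{x}_{1,r})$, which telescopes to $\vec{x}_{N_r,r} - \vec{x}_{1,l} = \xmax - \xmin$ because $\vec{x}_{N_l,l} = \vec{x}_{1,r}$. For (ii), applying the block form of $D_+$ to $\vec{1}$ gives, in the upper block, $D_{+,l}\vec{1} - M_l^{-1}\vec{t}_{R,l}(\vec{t}_{R,l}^T\vec{1}) + M_l^{-1}\vec{t}_{R,l}(\vec{t}_{L,r}^T\vec{1}) = 0 - M_l^{-1}\vec{t}_{R,l} + M_l^{-1}\vec{t}_{R,l} = 0$ using consistency of $D_{+,l}$, and the lower block is simply $D_{+,r}\vec{1} = 0$; the computation for $D_-$ is the mirror image.

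For (iii), the main computation, I would multiply out $M D_+$ and $D_-^T M$ in $2\times 2$ block form (using that $M_l^{-1}, M_r^{-1}$ are symmetric), substitute the local SBP relations $M_l D_{+,l} + D_{-,l}^T M_l = \vec{t}_{R,l}\vec{t}_{R,l}^T - \vec{t}_{L,l}\vec{t}_{L,l}^T$ and its analogue on the right, and observe that the two off-diagonal blocks, both equal to $\pm\vec{t}_{R,l}\vec{t}_{L,r}^T$, cancel, while the interior interface terms $\vec{t}_{R,l}\vec{t}_{R,l}^T$ (from the left) and $-\vec{t}_{L,r}\vec{t}_{L,r}^T$ (from the right) are removed exactly by the correction terms in \eqref{eq:D1-upwind-DG}. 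What survives is $\diag(-\vec{t}_{L,l}\vec{t}_{L,l}^T,\ \vec{t}_{R,r}\vec{t}_{R,r}^T)$, which is precisely $\tR\tR^T - \tL\tL^T$ on the joint grid, since $\tL$ restricts to $\vec{t}_{L,l}$ on the first block and $\tR$ to $\vec{t}_{R,r}$ on the last. This step is the longest but entirely mechanical; the only risk is a sign slip, which the structure of the cancellation makes easy to catch.

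For (iv), I would argue via the quadratic form rather than symmetry, since $M(D_+-D_-)$ need not be symmetric: writing $\vec{v} = (\vec{v}_l; \vec{v}_r)$ and setting $a = \vec{t}_{R,l}^T\vec{v}_l$, $b = \vec{t}_{L,r}^T\vec{v}_r$, the block expansion yields $\vec{v}^T M(D_+-D_-)\vec{v} = \vec{v}_l^T M_l(D_{+,l}-D_{-,l})\vec{v}_l + \vec{v}_r^T M_r(D_{+,r}-D_{-,r})\vec{v}_r - a^2 + 2ab - b^2$; the first two terms are $\le 0$ by the upwind property of the local operators and the remainder is $-(a-b)^2 \le 0$, so the sum is $\le 0$. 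Finally, for the accuracy claim: at a node strictly inside the left (resp.\ right) subgrid the corresponding row of $D_\pm$ is exactly that of $D_{\pm,l}$ (resp.\ $D_{\pm,r}$), so the truncation error is unchanged there; at the shared interface node, for grid data sampling a smooth function one has $\vec{t}_{R,l}^T\vec{u}_l = u(\vec{x}_{N_l,l}) = u(\vec{x}_{1,r}) = \vec{t}_{L,r}^T\vec{u}_r$ exactly, so the correction and coupling terms cancel identically and $D_+\vec{u}$ restricted to the left block equals $D_{+,l}\vec{u}_l$ (and similarly for the right block and for $D_-$). Hence the local truncation error of the joint operator at every node equals that of the local operator responsible for that node, giving global order equal to the minimum of the two. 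I expect clause (iv) — getting the sign conventions for "negative semidefinite" on a nonsymmetric matrix right — and the cancellation accounting in (iii) to be the only places where care is genuinely needed; the rest is routine.
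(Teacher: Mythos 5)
Your verification is correct: the block computations in (i)--(iii) check out (the off-diagonal contributions $\pm\vec{t}_{R,l}\vec{t}_{L,r}^T$ do cancel and the surviving terms are exactly $\tR\tR^T-\tL\tL^T$ on the joint grid), the quadratic-form argument in (iv) correctly yields the local dissipation terms plus $-(a-b)^2\le 0$, and the accuracy argument via exact cancellation of the coupling terms on point values of a continuous function is sound. Note that the paper itself does not prove this statement --- it cites it as Theorem~2.2 of \cite{ranocha2021broad} --- so there is no in-paper proof to compare against; your direct check of the defining properties \eqref{eq:upwind-SBP} on the joint grid is precisely the expected argument and fills that gap correctly.
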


The global upwind SBP operators described in Theorem~\ref{thm:upwind-DG} are
obtained by taking upwind numerical fluxes in a DG-type discretization.
Indeed, consider the discretization of two elements and their shared interface
written using the coupled upwind operators of Theorem~\ref{thm:upwind-DG}.
We have
\begin{equation}
  \partial_t \begin{pmatrix} \vec{u}_{l} \\ \vec{u}_{r} \end{pmatrix}
  + D_+
    \begin{pmatrix} \vec{f}^-_{l} \\ \vec{f}^-_{r} \end{pmatrix}
  + D_-
    \begin{pmatrix} \vec{f}^+_{l} \\ \vec{f}^+_{r} \end{pmatrix}
  = \BTS,
\end{equation}
where $\BTS$ collects the surface terms of their non-shared interfaces.
For the left element, we get
\begin{equation}
  \partial_t \vec{u}_{l}
  + (D_{+,l} - M_{l}^{-1} \vec{t}_{R,l} \vec{t}_{R,l}^T) \vec{f}^-_{l}
  + M_{l}^{-1} \vec{t}_{R,l} \vec{t}_{L,r}^T \vec{f}^-_{r}
  + D_{-,l} \vec{f}^+_{l}
  =
  \BTS_{l}.
\end{equation}
Replacing the index $l$ by the element number $k$ leads to
\begin{equation}
  \partial_t \vec{u}^k
  + D_{+} \vec{f^-}
  + D_{-} \vec{f^+}
  =
  - M^{-1} \tR \left( f^-(\vec{u}^{k+1}_L) - f^-(\vec{u}^k_R) \right)
  + \BTS^{k}.
\end{equation}
Thus, the interface term is identical to the SAT \eqref{eq:SAT-upwind}
using the upwind numerical flux coming from the flux vector splitting.

In particular, the discontinuous Galerkin spectral element method (DGSEM)
with upwind flux for the linear advection equation yields
an upwind SBP operator. Indeed, the local operators used on each element with
Gauss-Lobatto-Legendre nodes are classical SBP operators \cite{gassner2013skew}
and the upwind (Godunov) flux yields exactly the interface coupling described
in Theorem~\ref{thm:upwind-DG}.

\begin{example}
  Consider the DGSEM with polynomials of degree $p = 2$ and two elements
  in the domain $[0, 2]$. The corresponding nodes are
  \begin{equation}
    \vec{x}_{l} = (0, 1/2, 1)^T,
    \qquad
    \vec{x}_{r} = (1, 3/2, 2)^T.
  \end{equation}
  The polynomial derivative matrix $D$ and the mass matrix $M$ on each element
  with length unity are given by
  \begin{equation}
    D =
    \begin{pmatrix}
      -3 & 4  & -1 \\
      -1 & 0 & 1 \\
      1 & -4 & 3
    \end{pmatrix},
    \qquad
    M =
    \begin{pmatrix}
      1/6 \\
      & 2/3 \\
      && 1/6
    \end{pmatrix}.
  \end{equation}
  These matrices satisfy $M D + D^T M = \operatorname{diag}(-1, 0, 1)$, i.e., the
  SBP property \eqref{eq:SBP}.
  The construction in Theorem~\ref{thm:upwind-DG} yields the global operators
  \begin{equation}
    D_+ =
    \begin{pmatrix}
      -3 & 4 & -1 \\
      -1 & 0 & 1 \\
      1 & -4 & -3 & 6 \\
      &&& -3 & 4 & -1 \\
      &&& -1 & 0 & 1 \\
      &&& 1 & -4 & 3
    \end{pmatrix},
    \qquad
    D_- =
    \begin{pmatrix}
      -3 & 4 & -1 \\
      -1 & 0 & 1 \\
      1 & -4 & 3 \\
      && -6 & 3 & 4 & -1 \\
      &&& -1 & 0 & 1 \\
      &&& 1 & -4 & 3
    \end{pmatrix},
  \end{equation}
  and $M = \operatorname{diag}(1/6, 2/3, 1/6, 1/6, 2/3, 1/6)$.
  These operators satisfy
  \begin{equation}
    M D_+ + D_-^T M = \operatorname{diag}(-1, 0, 0, 0, 0, 1),
    \quad
    M (D_+ - D_-) =
    \begin{pmatrix}
      0 & 0 & 0 & 0 & 0 & 0 \\
      0 & 0 & 0 & 0 & 0 & 0 \\
      0 & 0 & -1 & 1 & 0 & 0 \\
      0 & 0 & 1 & -1 & 0 & 0 \\
      0 & 0 & 0 & 0 & 0 & 0 \\
      0 & 0 & 0 & 0 & 0 & 0
    \end{pmatrix}.
  \end{equation}
  The eigenvalues of the symmetric matrix $M (D_+ - D_-)$ are
  zero (with multiplicity five) and $-2$ (with multiplicity one).
  Thus, it is symmetric and negative semidefinite.
  Hence, the defining property \eqref{eq:upwind-SBP} of upwind SBP operators
  is satisfied.
\end{example}

\begin{remark}
  This also holds on periodic domains. Indeed, coupling upwind SBP operators
  as in Theorem~\ref{thm:upwind-DG} on all interfaces results in periodic
  upwind SBP operators.
\end{remark}

\subsection{Classical flux vector splitting using upwind SBP operators}

There are no first-order accurate upwind SBP operators in
\cite{mattsson2017diagonal}. However, we can construct such finite
difference operators as
\begin{equation}
\begin{gathered}
  D_- =
  \frac{1}{\Delta x}
  \begin{pmatrix}
    0  & 0 \\
    -1 &  1 \\
       & \ddots & \ddots \\
       &        & -1     & 1 \\
       &        &        & -2 & 2
  \end{pmatrix},
  \quad
  D_+ =
  \frac{1}{\Delta x}
  \begin{pmatrix}
    -2 &  2 \\
       & -1 & 1 \\
       &    & \ddots & \ddots \\
       &    &        &   -1   & 1 \\
       &    &        &    0   & 0
  \end{pmatrix},
  \\
  M = \Delta x \diag(1/2, 1, \dots, 1, 1/2).
\end{gathered}
\end{equation}
Indeed,
\begin{equation}
    M D_+ + D_-^T M = \diag(-1, 0, \dots, 0, 1)
\end{equation}
and
\begin{equation}
   \label{eq:diss_matrix}
    M (D_+ - D_-) =
    \begin{pmatrix}
    -1 &    1 \\
     1 &   -2   &    1 \\
       & \ddots & \ddots & \ddots \\
       &        &    1   &   -2   &  1 \\
       &        &        &    1   & -1
    \end{pmatrix}
\end{equation}
is negative semidefinite. In fact, \eqref{eq:diss_matrix} is the classical finite difference
discretization of the Laplacian with homogeneous Neumann boundary
conditions.
Note that the order of accuracy of $D_{\pm}$ at one of the boundaries is
reduced to zero, in accordance with the general order reduction of SBP
operators.

An upwind SBP semidiscretization of a conservation law
$\partial_t u + \partial_x f(u) = 0$
is the scheme
\begin{equation}
  \partial_t \vec{u} + D_+ \vec{f}^- + D_- \vec{f}^+ = \vec{\mathrm{SATs}},
\end{equation}
where $\vec{\mathrm{SATs}}$ are boundary terms used to impose the boundary conditions.
Applying the upwind SBP operators shown above in such a discretization
results in the classical first-order flux vector splitting
\begin{equation}
\begin{aligned}
  \partial_t u_i
  &=
  -\frac{1}{\Delta x} \left( f_{i+\frac{1}{2}}^* - f_{i-\frac{1}{2}}^* \right)
  \\
  &=
  -\frac{1}{\Delta x} \left( (f_{i}^+ + f_{i+1}^-) - (f_{i-1}^+ + f_{i}^-) \right)
  \\
  &=
  -\frac{1}{\Delta x} \left( (f_{i}^+ - f_{i-1}^+) + (f_{i+1}^- - f_{i}^-) \right)
\end{aligned}
\end{equation}
in the interior. Thus, high-order upwind SBP methods can be seen as
extensions of the classical first-order flux vector splitting methods.

\subsection{Formulation in two-dimensional curvilinear coordinates}
\label{sec:curvilinear}
The generic conservation law in two dimensions takes the form
\begin{equation}
\label{eq:HCL-2D}
  \partial_t u(t, x,y) + \partial_x f_1\bigl( u(t, x,y) \bigr) + \partial_y f_2\bigl( u(t, x,y) \bigr) = 0,
  \quad t \in (0, T), \quad (x,y) \in \Omega \subset \R^2,
\end{equation}
with conserved variable $u$ and fluxes $f_1$, $f_2$ in each coordinate direction,
equipped with appropriate initial and boundary conditions. We first subdivide
the problem domain $\Omega$ into $K$ non-overlapping
quadrilateral elements $E_k$, $k = 1,\ldots, K$. In the following, we consider
the conservation law \eqref{eq:HCL-2D} on an individual element and suppress
the index $k$.

Next, we create a transformation on each element $E_k$ between the
computational coordinates $(\xi,\eta) \in E_0$ where $E_0 = [-1,1]^2$
is the reference element and the physical coordinates $(x,y)$ as
\begin{equation}
\label{eq:mapping}
x = {X}(\xi,\eta), \quad y = {Y}(\xi,\eta).
\end{equation}
Typically, this mapping is a linear blending transfinite map between
the opposing sides of an element \cite{gordon1973construction,kopriva2006metric}.
When the element sides are straight, the mapping \eqref{eq:mapping}
is linear in each coordinate direction. However, if the sides are curved and
high-order polynomials are used to approximate the element boundaries, then
the mapping \eqref{eq:mapping} is a polynomial in each direction. In that case,
we represent the mapping as a polynomial of degree $\Ngeo$ in each coordinate direction.

Under this transformation, the conservation law in physical coordinates remains
a conservation law in reference coordinates, see, e.g., \cite{rydin2018high,kopriva2006metric}
\begin{equation}
\label{eq:transformed_cons}
J \partial_t u(t, {\xi}, \eta) + \partial_\xi \tilde{f}_1\bigl( u(t, {\xi}, \eta) \bigr) + \partial_\eta \tilde{f}_2\bigl( u(t, {\xi}, \eta) \bigr) = 0,
\end{equation}
where the contravariant fluxes $\tilde{f}$ and Jacobian $J$ for the two dimensional transformation
are
\begin{equation}
\label{eq:contra_vecs}
\tilde{f}_1 = Y_\eta f_1 - X_\eta f_2, \quad \tilde{f}_2 = -Y_\xi f_1 + X_\xi f_2, \quad J = Y_\eta X_\xi - Y_\xi X_\eta.
\end{equation}
For convenience we introduce a compact notation for the flux in the contravariant (or normal)
direction. The normal direction (but not normalized) vectors in reference space are written as
\begin{equation}
\label{eq:normals}
\hat{n}^1 = (Y_\eta, -X_\eta)^T \quad \textrm{and} \quad \hat{n}^2 = (-Y_\xi, X_\xi)^T,
\end{equation}
where $X_\xi, X_\eta, Y_\xi, Y_\eta$ are the metric terms. So, for example, the first contravariant
flux is given by $\tilde{f}_1 = f_1\hat{n}^1_1 + f_2\hat{n}^1_2$.
Additionally, the metric terms satisfy two metric identities
\begin{equation}
\label{eq:metricIDs}
\partial_\xi Y_\eta - \partial_\eta Y_\xi = 0 \quad \textrm{and} \quad -\partial_\xi X_\eta + \partial_\eta X_\xi = 0
\end{equation}
that are crucial to guarantee free-stream preservation (FSP) \cite{vinokur2002extension,kopriva2006metric,visbal1999high}.
That is, given a flux that is constant in space, its divergence vanishes and the (constant) solution of \eqref{eq:HCL-2D}
does not change in time. We will revisit the recovery of FSP on the discrete level later in this section.

From the mapped conservation law \eqref{eq:transformed_cons} the next step is to perform a flux vector splitting.
However, it is important to note that one cannot simply multiply the Cartesian flux vector splittings with the metric
terms to create their curvilinear counterparts. This would lead to inconsistencies with respect to the directionality
of the waves in the considered splitting. Instead, one follows a procedure of rotation into a generalized coordinate's
normal direction, performing the flux vector splitting, and back-rotating the result,
see \cite{anderson1986comparison,bolcs1989} for complete details. This process guarantees
that the flux vector splittings satisfy the following relationship in each contravariant direction
\begin{equation}
\label{eq:generic_fvs}
\tilde{f}_i(u) = f_1(u) \hat{n}^i_1 + f_2(u)\hat{n}^i_2 = {f}^+(u; \hat{n}^i) + {f}^-(u; \hat{n}^i) = \tilde{f}^+_i(u) + \tilde{f}^-_i(u),
 \quad i = 1,2.
\end{equation}
We introduce the notation $\tilde{f}^\pm(u; \hat{n}^i)$ to highlight that the normal direction components
can no longer be factored out of the flux vector splitting and different flux components may depend
on the normal direction in different ways.
To clarify the form of the flux vector splittings in generalized coordinate directions we highlight three examples
for the compressible Euler equations.
\begin{example}
\label{ex:llf_splitting}
  The local Lax-Friedrichs splitting in the contravariant directions uses a local estimate for the
  largest value of $\lambda$ for the possible wave speeds and has the form
  \begin{equation}
    \label{eq:llf_splitting}
    \tilde{f}^\pm_i = {f}^\pm(u; \hat{n}^i) = \frac{1}{2} \left( \tilde{f}_i(u) \pm \lambda u \right),\quad i = 1,2,
  \end{equation}
  where $u = (\rho, \rho v_1, \rho v_2, \rho e)^T$ and $\lambda = \sqrt{v_1^2 + v_2^2} + a$.
  As discussed above, \eqref{eq:llf_splitting} has a linear dependency on the mapping terms; however they
  cannot be factored out to separate the Cartesian splitting from the normal directions as was the case for
  the complete physical flux \eqref{eq:contra_vecs}.
\end{example}
\begin{example}
\label{ex:drikakis}
  We describe an improved variant of the Steger-Warming splitting (Example \ref{ex:Steger-Warming}) for
  generalized coordinates due to Drikakis and Tsangaris \cite{drikakis1993solution}. We, again, use the
  standard notation for the positive/negative part of an eigenvalue $\lambda_i$ with
  \begin{equation}
     \lambda^\pm_i = \frac{\lambda_i + |\lambda_i |}{2}.
  \end{equation}
  The wave speeds in the normal direction $\hat{n}^i$ used by this splitting are
  \begin{equation}
     \tilde{\lambda}_1 = v_1\hat{n}^i_1 + v_2\hat{n}^i_2 - a,
     \qquad
     \tilde{\lambda}_2 = v_1\hat{n}^i_1 + v_2\hat{n}^i_2 + a,
  \end{equation}
  with the sound speed $a = \sqrt{\gamma p/\rho}$. The flux vector splitting of Drikakis and Tsangaris is given by
  \begin{equation}
  \label{eq:drikakis_split}
     \tilde{f}^\pm_i = {f}^\pm(u; \hat{n}^i) =
     \frac{\rho}{2}
     \begin{pmatrix}
     \tilde{\lambda}^\pm_1 + \tilde{\lambda}^\pm_2\\[0.1cm]
     (\tilde{\lambda}^\pm_1 + \tilde{\lambda}^\pm_2)v_1 + \frac{a \hat{n}_1^i}{\gamma}(\tilde{\lambda}^\pm_2 - \tilde{\lambda}^\pm_1) \\[0.1cm]
     (\tilde{\lambda}^\pm_1 + \tilde{\lambda}^\pm_2)v_2 + \frac{a \hat{n}_2^i}{\gamma}(\tilde{\lambda}^\pm_2 - \tilde{\lambda}^\pm_1) \\[0.1cm]
     (\tilde{\lambda}^\pm_1 + \tilde{\lambda}^\pm_2)H
     \end{pmatrix},
  \end{equation}
  where $H = (\rho e + p) / \rho = v^2 / 2 + a^2 / (\gamma - 1)$
  is again the enthalpy.
\end{example}
\begin{example}
\label{ex:van-Leer-Hanel_curved}
  As a last example, we describe the van Leer-Hänel splitting
  \cite{vanleer1982flux,hanel1987accuracy,liou1991high} rotated into
  a contravariant normal direction \cite{anderson1986comparison}.
  We introduce the signed Mach number in the normal direction
  \begin{equation}\label{eq:signedMach}
  \tilde{M} = \frac{v_1\hat{n}^i_1 + v_2 \hat{n}^i_2}{a}, \quad i = 1,2,
  \end{equation}
  and the pressure splitting
  \begin{equation}
     \label{eq:pressure_vLH}
    p^\pm = \frac{1 \pm \gamma \tilde{M}}{2} p.
  \end{equation}
  The flux splittings are then given by
  \begin{equation}
  \label{eq:vL_splitting}
    \tilde{f}^\pm_i
    =
    {f}^\pm(u; \hat{n}^i)
    =
    \pm \frac{\rho a (\tilde{M} \pm 1)^2}{4}
    \begin{pmatrix}
      1 \\
      v_1 \\
      v_2 \\
      H
    \end{pmatrix}
    +
    \begin{pmatrix}
      0 \\
      \hat{n}^i_1 p^\pm \\
      \hat{n}^i_2 p^\pm \\
      0
    \end{pmatrix},
  \end{equation}
  where $H$ is the enthalpy.
\end{example}
\begin{remark}
\label{rem:metrics}
Notice, as the flux vector splittings become more sophisticated their dependency on the normal direction
(and in turn the metric terms) increase in complexity as well. That is, the local Lax-Friedrichs splitting
\eqref{eq:llf_splitting} is linear in the metric terms, the Drikakis-Tsangaris splitting \eqref{eq:drikakis_split}
is linear in the advective components and quadratic in the metric terms for the pressure splitting, and
the van Leer-Hänel splitting \eqref{eq:vL_splitting} is quadratic in the metric terms in all components.
\end{remark}

With appropriate flux vector splittings that satisfy \eqref{eq:generic_fvs} in hand, we split the contravariant
fluxes in mapped conservation law on each element \eqref{eq:transformed_cons} to have
\begin{equation}
\label{eq:transformed_split}
J \partial_t u + \partial_\xi \tilde{f}^+_1 + \partial_\xi \tilde{f}^-_1
+ \partial_\eta \tilde{f}^+_2 + \partial_\eta \tilde{f}^-_2 = 0.
\end{equation}
Just as in Section~\ref{sec:local-upwind}, we discretize the mapped conservation law in space
with upwind SBP operators and couple the element to its neighbor elements with
appropriate $\vec{\mathrm{SATs}}$. These $\vec{\mathrm{SATs}}$ have the same form
as given in \eqref{eq:SAT-fnum} along each of the four element interfaces. As before, if the same splitting
in the normal direction is used for the numerical flux as the physical flux, then we recover
analogous statements to \eqref{eq:fnumR} and \eqref{eq:fnumL}. We then have a generic statement of a
$\SAT$ in the normal direction on an interface in element $k$
\begin{equation}
\label{eq:transformed_SATs}
\widetilde{\SAT}^k
=
  - M^{-1} \tR \left( \fnum(\vec{u}^k_R, \vec{u}^{k+1}_L; \hat{n}^i) - \tilde{\vec{f}}^k_R \right)
  + M^{-1} \tL \left( \fnum(\vec{u}^{k-1}_R, \vec{u}^k_L; \hat{n}^i) - \tilde{\vec{f}}^k_L \right).
\end{equation}
The resulting upwind SBP discretization on element $k$ takes the form
\begin{equation}
\label{eq:transformed_split}
\vec{J} \partial_t \vec{u} + D_- \vec{\tilde{f}}^+_1 + D_+ \vec{\tilde{f}}^-_1
+ \vec{\tilde{f}}^+_2 D_-^T + \vec{\tilde{f}}^-_2 D_+^T = \widetilde{\vec{\mathrm{SATs}}}^k.
\end{equation}
Compared to the one-dimensional case described in Section~\ref{sec:local-upwind},
we use a tensor product structure for the mapped conservation law
\eqref{eq:transformed_cons} on each quadrilateral element. Thus, we can think of the
discrete solution $\vec{u}$ as a two-dimensional array of size
$N_\xi \times N_\eta$, where $N_\xi$ and $N_\eta$ are the number of
grid points in the $\xi$ and $\eta$ directions, respectively. At each
grid point with index $(i, j)$, $\vec{u}_{i,j}$ is the vector of conserved variables
at this point. With this data layout, multiplying the mapped fluxes
$\vec{\tilde{f}}^{\pm}_1$ from the left by the upwind SBP operator $D_{\mp}$
approximates the derivative in the $\xi$-direction. Similarly, multiplying
the mapped fluxes $\vec{\tilde{f}}^{\pm}_2$ from the right by the upwind SBP
operator $D_{\mp}^T$ approximates the derivative in the $\eta$-direction.
Due to the tensor product structure for the mapped conservation law
\eqref{eq:transformed_cons}, the $\widetilde{\vec{\mathrm{SAT}}}$ \eqref{eq:transformed_SATs} in the
normal directions are computed in a similar way as in the one-dimensional
case.

The final component to fully describe the upwind SBP method on curvilinear domains \eqref{eq:transformed_split}
is to discuss how the metric terms are approximated. By design, from \cite{mattsson2017diagonal},
the upwind SBP operators $D_{\pm}$, as well as the central SBP operator
$(D_+ + D_-) / 2$
they generate, have $p^{\textrm{th}}$ order accurate interior stencils
and $p/2$ order accurate boundary stencils. This boundary closure means that any of the
three available differencing operators can differentiate polynomials up to degree $p/2$ exactly.
For instance, one available upwind SBP operator is the fourth-order interior, second-order boundary
closure, denoted 4-2, operator where $D_{\pm}$ or $D=(D_+ + D_-)/2$ can differentiate
up to quadratic polynomials exactly.

Because all available upwind SBP operators of a given order have the same boundary closure
accuracy, we use the central operator $D=(D_+ + D_-)/2$ to compute the metric terms by
directly differentiating the mapping $X(\xi,\eta)$ from \eqref{eq:mapping}, i.e.,
\begin{equation}
\label{eq:discrete_metrics}
X_\xi \approx D \vec{X} = \vec{X_\xi}, \quad X_\eta \approx \vec{X} D^T = \vec{X_\eta}, \quad
Y_\xi \approx D \vec{Y} = \vec{Y_\xi}, \quad Y_\eta \approx \vec{Y} D^T = \vec{Y_\eta}.
\end{equation}
We note, depending on the strategy used to compute the discrete metric terms, one may or
may not recover a discrete equivalent of the metric identities
\eqref{eq:metricIDs}. That is, it is possible to lose discrete FSP
\cite{kopriva2006metric,vinokur2002extension,visbal1999high}.
Applying the approximation strategy from \eqref{eq:discrete_metrics}, we examine the discrete
version of the metric identities \eqref{eq:metricIDs} to find
 \begin{equation}
 \label{eq:discrete_metricIDs}
D \vec{Y_\eta} - \vec{Y_\xi}D^T = D \vec{Y} D^T - D \vec{Y} D^T = \vec{0}
\quad \textrm{and} \quad
-D \vec{X_\eta} + \vec{X_\xi}D^T = -D \vec{X} D^T + D \vec{X} D^T = \vec{0}.
 \end{equation}
Thus, the metric identities hold discretely as has been shown previously for
finite difference methods in two-dimensional curvilinear coordinates, e.g.,
\cite{vinokur2002extension,visbal1999high,aalund2019encapsulated}.
Moreover, the result \eqref{eq:discrete_metricIDs}
actually holds independently of the boundary closure accuracy or the
polynomial degree $\Ngeo$ of the mapping \eqref{eq:mapping}.
The result that a central SBP finite difference method is free-stream preserving, via the discrete
metric identities \eqref{eq:discrete_metricIDs}, is directly related to the fact that the contravariant
fluxes \eqref{eq:contra_vecs} have a linear dependency on the metric terms. However, as discussed
in Remark~\ref{rem:metrics}, this is not always the case for a splitting in curvilinear
coordinates.

For more sophisticated splittings, like that of van Leer-Hänel in
Example~\ref{ex:van-Leer-Hanel_curved}, the issue of FSP becomes more subtle.
There is a delicate interplay between the dependency
a given splitting has with respect to the metric terms, the boundary closure order of the upwind SBP
operator, and the polynomial degree $\Ngeo$ of the mapping.
We collect the implications of this interplay into the following theorem.
\begin{theorem}[FSP for the curvilinear upwind SBP method]
\label{thm:curvi_fsp}
Consider a flux vector splitting that has a maximum dependency on the metric terms of degree $m$,
a set of upwind SBP operators with $p^{\textrm{th}}$ order interior stencils, and mappings $X(\xi,\eta)$
and $Y(\xi,\eta)$ with polynomial degree of $\Ngeo$ in each coordinate direction. The curvilinear upwind
SBP method \eqref{eq:transformed_split} is free-stream preserving when either
\begin{enumerate}
   \item $m=1$, i.e, there is a linear dependence on the metric terms, or
   \item $m>1$ and the polynomial degree of the mapping satisfies
   \begin{equation}
      \label{eq:boundary_constraint}
      \Ngeo \leq \frac{p}{2m}.
   \end{equation}
   That is, the boundary closure can exactly differentiate polynomials up to degree $m \Ngeo \le p / 2$.
\end{enumerate}
\end{theorem}
\begin{proof}
Assume we have a constant solution $u_\infty$ for the conservation law \eqref{eq:HCL-2D}.
The physical variable terms in the flux vector are then also constants.

\underline{Part 1 ($m=1$)}: By construction, the metric terms in the approximation satisfy the discrete metric
identities \eqref{eq:discrete_metricIDs}. Therefore, the flux splitting terms with a linear dependence
on the metric terms vanish from the same reasoning that the standard, non-split curvilinear flux formulation vanishes.

\underline{Part 2 ($m>1$)}: To guarantee that the curvilinear divergence of the upwind SBP scheme
\eqref{eq:transformed_split} vanishes, it is sufficient if the following terms individually vanish
\begin{equation}
\label{eq:vanishing_terms}
D_-\vec{Y_\eta}^m - \vec{Y_\xi}^m D_-^T,
\quad
D_+\vec{Y_\eta}^m - \vec{Y_\xi}^m D_+^T,
\quad
-D_-\vec{X_\eta}^m + \vec{X_\xi}^m D_-^T,
\quad
-D_+\vec{X_\eta}^m + \vec{X_\xi}^m D_+^T.
\end{equation}
The four terms above are similar to the discrete metric identities, but the metric terms are now
polynomials of higher degree.
To clarify their appearance, consider the van~Leer-Hänel splitting
\eqref{eq:vL_splitting} of Example~\ref{ex:van-Leer-Hanel_curved}.
The signed Mach number $\tilde M$ from \eqref{eq:signedMach} depends linearly on the
rotated normal direction $\hat n$, i.e., linearly on the metric terms.
Thus, the split pressure $p^\pm$ from \eqref{eq:pressure_vLH} depends linearly on the metric terms
as well. Hence, the van-Leer-H\"{a}nel splitting \eqref{eq:vL_splitting} has a quadratic dependency
on the metric terms because the advective components are scaled with $(\tilde M \pm 1)^2$
and the pressure components have the form $\hat n p^{\pm}$.
As such, for a constant solution $u_\infty$, the fluxes for the van-Leer-H\"{a}nel splitting are
constant physical variables multiplied with expressions that depend quadratically on
the (non-constant) metric terms, i.e., $m = 2$.

Consider the first term in \eqref{eq:vanishing_terms}, i.e., $D_-\vec{Y_\eta}^m - \vec{Y_\xi}^m D_-^T$.
From the constraint on the polynomial degree of the mapping \eqref{eq:boundary_constraint}
we know that the boundary closure order of the upwind SBP operators can exactly differentiate
polynomials up to degree $m \Ngeo \le p / 2$. The discrete metric term $\vec{Y_\eta} = \vec{Y}D^T$
is a polynomial of degree $\Ngeo$ in the $\xi$-direction and degree $\Ngeo-1$ in the $\eta$-direction
and the discrete metric term $\vec{Y_\xi} = D \vec{Y}$ is a polynomial of degree $\Ngeo - 1$ in
the $\xi$-direction and degree $\Ngeo$ in the $\eta$-direction. Taking these metric terms to the power
$m$ means that $\vec{Y_\eta}^m$ is a polynomial of degree $m\Ngeo$ in the $\xi$-direction and degree
$m(\Ngeo-1)$ in the $\eta$-direction and $\vec{Y_\xi}^m$ is a polynomial of degree $m(\Ngeo-1)$
in the $\xi$-direction and degree $m\Ngeo$ in the $\eta$-direction.
By design, the upwind SBP derivative operators $D_{\pm}$ and $D = (D_+ + D_-)/2$ all have the
same boundary order closure and can differentiate polynomials up to degree $p/2$ exactly.
Therefore, under the constraint \eqref{eq:boundary_constraint} the $D_-$ operator can exactly
differentiate the term $\vec{Y_\eta}^m$ in the $\xi$- direction as it is a polynomial of degree $m\Ngeo$.
Similarly, the $D_-$ operator can exactly differentiate the term $\vec{Y_\xi}^m$ in the $\eta$- direction
as it is also a polynomial of degree $m\Ngeo$. Thus,
\begin{equation}
D_-\vec{Y_\eta}^m - \vec{Y_\xi}^m D_-^T = \vec{0},
\end{equation}
due to the exactness of polynomial differentiation of the boundary closure. The remaining three
terms from \eqref{eq:vanishing_terms} individually vanish from a similar argument.
Therefore, the curvilinear upwind SBP method is FSP.
\end{proof}

The result of Theorem~\ref{thm:curvi_fsp} is two-fold.
If the dependency of the curvilinear flux vector splitting on the metric terms
remains linear, i.e. $m=1$, then the curvilinear upwind SBP method
\eqref{eq:transformed_split} retains discrete FSP regardless of the boundary closure
order and polynomial degree of the mapping $\Ngeo$. This is the case for the
local Lax-Friedrichs splitting in Example~\ref{ex:llf_splitting} and, as shown in
Section~\ref{sec:fsp_numerics}, this splitting retains discrete FSP for all considered
upwind SBP operators and meshes. These results agree with the previous work
of Rydin et al.~\cite{rydin2018high}, where the global Lax-Friedrichs splitting was used
and there were no reported spurious wave artifacts due to curved elements.
However, if the curvilinear flux vector splitting has a higher degree polynomial dependence
on the metric terms it places a cap on the polynomial degree of curvilinear elements.
This is the case for the Drikakis-Tsangaris and van Leer-Hänel splittings of the compressible
Euler equations given in Examples~\ref{ex:drikakis}
and \ref{ex:van-Leer-Hanel_curved}, respectively. For both curvilinear splittings, the maximum
dependency on the metric terms is quadratic, so $m=2$. Thus, upwind SBP operators with a boundary
closure order of two are restricted to unstructured bi-linear element meshes and operators with a boundary
closure order of four are restricted to at most $\Ngeo=2$ or quadratic polynomial boundaries.
If curvilinear meshes are constructed with boundaries beyond these values for $\Ngeo$, then
the method is not FSP. We numerically examine the FSP properties of the high-order, curvilinear upwind SBP
method \eqref{eq:transformed_split} with different splittings, boundary closures, and mesh polynomial degrees
in Section~\ref{sec:fsp_numerics}.
\begin{remark}
As a word of caution, one must take care when adapting a flux vector splitting into the curvilinear high-order
upwind SBP context. For instance, the pressure for the van Leer-Hänel splitting \eqref{eq:pressure_vLH}
considered herein is linear with respect to the signed Mach number in the normal direction.
This means that the pressure term is quadratic in the
metric terms as the pressure splitting is multiplied with the components of the normal direction vector $\hat{n}^i$.
Other pressure splittings are proposed by Liou and Steffen~\cite{liou1991high}, e.g.,
\begin{equation}
p^\pm = \frac{1}{4}(\tilde{M} \pm 1)^2(2 \mp \tilde{M})p,
\end{equation}
which is cubic in the signed Mach number and, in turn, cubic with respect to the metric terms. This means that
overall, the van Leer-Hänel splitting with the above pressure splitting has $m=4$,
as the pressure components are further multiplied with $\hat n$.
Thus, bi-linear element
meshes with $\Ngeo=1$ require at least fourth order boundary closures to guarantee FSP of the
upwind SBP method.
\end{remark}
\begin{remark}
The constraint on the boundary polynomial degree \eqref{eq:boundary_constraint} is similar to the constraint
found by Kopriva~\cite[Theorem 4]{kopriva2006metric} for three-dimensional cross-product discrete metric terms.
\end{remark}

\begin{remark}
\label{rem:curvilinear-sufficient}
  The conditions given in Theorem~\ref{thm:curvi_fsp} are sufficient
  to guarantee discrete FSP. They may not be necessary in all cases. However,
  the numerical results in Section~\ref{sec:fsp_numerics} suggest that
  the result is sharp for the considered flux vector splittings, nodal polynomial approximation of curved boundaries,
  and tensor product extension to curvilinear coordinates.
\end{remark}

\section{Analysis of local linear/energy stability}
\label{sec:stability}

From the review and discussion of upwind SBP methods for nonlinear
conservation laws, we now turn to one goal of this article:
the analysis of local linear/energy stability properties.
We follow \cite{gassner2022stability} and consider local linear/energy
stability for Burgers' equation
\begin{equation}
  \partial_t u(t, x) + \partial_x \frac{u(t, x)^2}{2} = 0
\end{equation}
with periodic boundary conditions. Thus, we linearize the equation around
a baseflow $\widetilde{u}$, write $u = \widetilde{u} + v$, and get
\begin{equation}
  \partial_t v + \partial_x (\widetilde{u} v) = 0.
\end{equation}
This is a linear advection equation for the perturbation $v$ with variable
coefficient $\widetilde{u}$. For a positive baseflow $\widetilde{u} > 0$,
the spatial operator has an imaginary spectrum since it is skew-symmetric
with respect to the weighted $L^2$ inner product
$(v, w) \mapsto \int \widetilde{u} v w$
\cite{gassner2022stability,manzanero2017insights}.

However, \citet{gassner2022stability} observed that high-order
semidiscretizations conserving/dissipating the $L^2$ entropy $\int u^2$
lead to a linearized operator having some eigenvalues with a significantly
positive real part for a non-constant baseflow $\widetilde{u}$.
They discussed this in the context of local linear/energy
stability and issues of discretizations for under-resolved flows, see also
\cite{ranocha2021preventing}.

From the results and discussion in \cite{gassner2022stability}, it appears to be desirable
that a semidiscretization mimics the property of a linearization
having eigenvalues with non-positive real part for all positive baseflows
$\widetilde{u}$. Before studying the upwind SBP method specifically for Burgers'
equation, we concentrate on constant baseflows in a general setting.

Consider a scalar conservation law $\partial_t u + \partial_x f(u) = 0$
with periodic boundary conditions. We first consider entropy-conservative
flux differencing schemes of the form
\begin{equation}
\label{eq:flux-differencing}
  \partial_t \vec{u}_i + \sum_{j} 2 D_{ij} f^\mathrm{vol}(\vec{u}_i, \vec{u}_j) = 0,
\end{equation}
where $D$ is a periodic SBP operator and $f^\mathrm{vol}$ is an
entropy-conservative numerical flux in the sense of Tadmor
\cite{tadmor1987numerical,tadmor2003entropy}, i.e., it satisfies
\begin{equation}
  \forall u_l, u_r\colon
  \bigl( w(u_r) - w(u_l) \bigr) \cdot f^\mathrm{vol}(u_l, u_r)
  =
  \psi(u_r) - \psi(u_l),
\end{equation}
where $w(u) = U'(u)$ are the entropy variables and $\psi$ is the flux
potential associated to a convex entropy $U$. These methods have been
introduced in \cite{lefloch2002fully,fisher2013high}; see also
\cite{chen2020review,ranocha2023efficient}.

\begin{theorem}
  Entropy-conservative semidiscretizations using flux differencing in
  periodic domains are linearly/energy stable around constant states;
  in particular, their Jacobian has a purely imaginary spectrum.
\end{theorem}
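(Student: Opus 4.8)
The plan is to show that linearizing the flux-differencing scheme \eqref{eq:flux-differencing} about a constant state produces, up to a real scalar factor, the periodic SBP derivative operator $D$ itself, whose spectrum is purely imaginary by construction.

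First I would set $\vec{u} = \widetilde{u}\,\vec{1} + \vec{v}$ with $\widetilde{u} \in \R$ constant and differentiate the right-hand side $-\sum_{j} 2 D_{ij}\, f^{\mathrm{vol}}(\vec{u}_i, \vec{u}_j)$ with respect to the nodal unknowns. The product rule yields one contribution from $\partial_1 f^{\mathrm{vol}}$ and one from $\partial_2 f^{\mathrm{vol}}$, both evaluated on the diagonal at $(\widetilde{u}, \widetilde{u})$. Here I would use the two standard structural properties of an entropy-conservative two-point flux: consistency, $f^{\mathrm{vol}}(u,u) = f(u)$, and symmetry, $f^{\mathrm{vol}}(u_l, u_r) = f^{\mathrm{vol}}(u_r, u_l)$. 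Differentiating the consistency identity gives $\partial_1 f^{\mathrm{vol}}(u,u) + \partial_2 f^{\mathrm{vol}}(u,u) = f'(u)$, and symmetry forces the two partial derivatives to coincide, so each equals $\tfrac{1}{2} f'(\widetilde{u})$.

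Next I would assemble the linearized operator. The $\partial_1$ piece is proportional to $\delta_{ik}\sum_j D_{ij}$, which vanishes because consistency of the SBP operator means $D\vec{1} = \vec{0}$, i.e.\ $\sum_j D_{ij} = 0$ for every $i$; the $\partial_2$ piece is proportional to $D_{ik}$. What remains is exactly the Jacobian $J = -f'(\widetilde{u})\, D$, the discrete counterpart of the continuous linearization $\partial_t v + f'(\widetilde{u})\,\partial_x v = 0$. Finally I would invoke the periodic SBP relation \eqref{eq:SBP-periodic}, $MD + D^T M = 0$: it states that $D$ is skew-symmetric with respect to the $M$-weighted inner product, equivalently that $M^{1/2} D M^{-1/2}$ is a real skew-symmetric matrix. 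Such a matrix has purely imaginary eigenvalues, and $D$ is similar to it, so $\operatorname{spec}(D) \subset \i\R$ and therefore $\operatorname{spec}(J) = -f'(\widetilde{u})\operatorname{spec}(D) \subset \i\R$. Energy stability follows at once, since $\partial_t \|\vec{v}\|_M^2 = 2\,\vec{v}^T M J \vec{v} = -f'(\widetilde{u})\,\vec{v}^T\bigl(MD + D^T M\bigr)\vec{v} = 0$.

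I do not anticipate a substantive obstacle; the only delicate point is the bookkeeping in the Jacobian computation --- correctly tracking which argument of $f^{\mathrm{vol}}$ each index sits in and using $D\vec{1} = \vec{0}$ to kill the diagonal term. It is worth noting that the Tadmor entropy-conservation condition itself plays no role in the constant-state linearization; only consistency and symmetry of $f^{\mathrm{vol}}$ together with the SBP structure of $D$ are used.
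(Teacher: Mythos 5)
Your proposal is correct and follows essentially the same route as the paper: linearize at the constant state, use that symmetry plus consistency of the two-point flux force $\partial_1 f^{\mathrm{vol}}(\widetilde{u},\widetilde{u}) = \partial_2 f^{\mathrm{vol}}(\widetilde{u},\widetilde{u}) = \tfrac{1}{2}f'(\widetilde{u})$, kill the diagonal term via consistency of $D$, and conclude the Jacobian is $-f'(\widetilde{u})D$, which is skew-adjoint with respect to $M$ and hence has purely imaginary spectrum. The only difference is presentational: you derive by direct differentiation what the paper imports from the cited Hadamard-product Jacobian formula and the lemma on flux derivatives at constant states.
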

\begin{proof}
  Here, we use the notation of \cite[Theorem~2.1]{chan2022efficient}, i.e.,
  $Q = M D$ and $F_{ij} = f^\mathrm{vol}(\vec{u}_i, \vec{u}_j)$.
  The Jacobian (multiplied by the negative mass matrix) is
  \begin{equation}
    J = 2 (Q \circ F_y) - \diag( 1^T (2 Q \circ F_y) ),
  \end{equation}
  where $Q$ is skew-symmetric (due to the periodic boundary conditions)
  and $\circ$ denotes the Hadamard (pointwise)
  product of two matrices. The matrix $F_y$ is given by the entries
  $\partial_2 f^\mathrm{vol}(\vec{u}_i, \vec{u}_j)$, i.e., the
  derivatives of the numerical flux $f^\mathrm{vol}$ with respect to the
  second argument evaluated at the states $\vec{u}_i, \vec{u}_j$.
  Since the derivative $F_y$ is evaluated at a constant state
  $\widetilde{u}$, all of its components are the same. In particular,
  $\forall i,j\colon (F_y)_{ij} = \frac{1}{2} f'(u)$
  \cite[Lemma~3.1]{ranocha2018comparison}. Thus,
  \begin{equation}
    J
    =
    2 (Q \circ F_y) - \diag( 1^T (2 Q \circ F_y) )
    =
    f'(u) \left( Q - \diag( 1^T Q ) \right)
    =
    f'(u) Q,
  \end{equation}
  where we used the SBP property. Hence, $J$ is skew-symmetric and has a
  purely imaginary spectrum.
\end{proof}

Next, we consider a central SBP discretization of the form
\begin{equation}
  \partial_t \vec{u} + D \vec{f} = \vec{0},
\end{equation}
where $D$ is a periodic SBP operator. As observed numerically in
\cite{gassner2022stability}, this leads to a purely imaginary spectrum
of the linearization.
\begin{theorem}
  Central nodal diagonal-norm SBP semidiscretizations of conservation laws in periodic
  domains are linearly stable around states with positive speed
  $f'(\widetilde{u}) > 0$;
  in particular, the Jacobian has a purely imaginary spectrum.
\end{theorem}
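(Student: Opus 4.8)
The plan is to mimic the computation carried out in the previous theorem, but now for the non-flux-differencing central discretization $\partial_t \vec{u} + D \vec{f} = \vec{0}$, where $D$ is a periodic diagonal-norm SBP operator. First I would linearize around a constant baseflow $\widetilde{u}$: writing $u = \widetilde{u} + v$ and dropping higher-order terms gives $\partial_t \vec{v} + D (f'(\widetilde{u}) \vec{v}) = \vec{0}$, i.e. the Jacobian (up to sign) is $J = f'(\widetilde{u})\, D$. Because $f'(\widetilde{u})$ is just a scalar, the spectrum of $J$ is $f'(\widetilde{u})$ times the spectrum of $D$, so everything reduces to understanding the spectrum of the periodic SBP operator $D$ itself.

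Next I would exploit the diagonal-norm assumption. Since $M$ is diagonal and positive definite, I can form $M^{1/2}$ and consider the similarity transformation $\widehat{D} = M^{1/2} D M^{-1/2}$, which has the same spectrum as $D$. The periodic SBP property $M D + D^T M = 0$ then becomes $M^{1/2} \widehat{D} M^{1/2} + M^{1/2} \widehat{D}^T M^{1/2} = 0$, i.e. $\widehat{D} + \widehat{D}^T = 0$: the transformed operator is skew-symmetric. A real skew-symmetric matrix has a purely imaginary spectrum, hence so does $D$, and hence so does $J = f'(\widetilde{u}) D$. Linear stability around the constant state follows immediately (the eigenvalues have zero real part, so perturbations neither grow nor decay in the linearized dynamics), and this is exactly the property observed numerically in \cite{gassner2022stability}.

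The role of the positivity hypothesis $f'(\widetilde{u}) > 0$ deserves a remark: for the purely imaginary spectrum statement it is not strictly needed — any real scalar multiple of a skew-symmetric matrix still has imaginary spectrum — but it is the natural regime (it matches the positive-baseflow setting of the Burgers analysis and the sign conventions in \cite{gassner2022stability}), and it is what makes the comparison with upwind operators meaningful later. I expect no serious obstacle here; the only point requiring a little care is making sure the similarity transformation argument is stated for a genuinely diagonal $M$ (so that $M^{1/2}$ is trivially available and commutes appropriately), which is precisely why the diagonal-norm assumption appears in the hypothesis. The argument does not extend verbatim to non-diagonal $M$ because then $M^{1/2} D M^{-1/2}$ need not be skew-symmetric in general, which is worth noting as the reason for the restriction.
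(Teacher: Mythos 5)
Your argument is correct, but only for the special case of a \emph{constant} baseflow, and that is not quite what the theorem (or the paper's proof) covers. The statement is the central-scheme analogue of Theorem~\ref{thm:fully-upwind}: the baseflow $\widetilde{u}$ may vary in space, subject only to the pointwise positivity $f'(\widetilde{u}) > 0$. In that setting the linearization of $-D\vec{f}(\vec{u})$ is $-D\diag(\vec{f}')$ with a genuinely non-scalar diagonal factor, so the spectrum is \emph{not} simply a scalar multiple of the spectrum of $D$, and your similarity transformation by $M^{1/2}$ alone no longer produces a skew-symmetric matrix. The tell-tale sign of the misreading is your remark that the hypothesis $f'(\widetilde{u}) > 0$ is ``not strictly needed'': for constant states it indeed is not, but for varying states it is exactly the ingredient that makes the argument work.

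The paper's proof is the natural extension of your idea: with $Q = MD$ skew-symmetric (periodic SBP) and $W = \diag(\vec{f}')\,M$ symmetric positive definite (this is where both the diagonal-norm assumption and $f' > 0$ enter), one computes for $A = -D\diag(\vec{f}')$
\begin{equation*}
  W A + A^T W
  = -\diag(\vec{f}') \bigl( M D + D^T M \bigr) \diag(\vec{f}')
  = 0,
\end{equation*}
so $A$ is skew-adjoint with respect to the inner product weighted by $W$; equivalently, $W^{1/2} A W^{-1/2}$ is skew-symmetric and the spectrum is purely imaginary. Your constant-coefficient computation is recovered as the special case $\diag(\vec{f}') = f'(\widetilde{u}) \I$. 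To repair the proposal, replace the conjugation by $M^{1/2}$ with conjugation by $\bigl(\diag(\vec{f}') M\bigr)^{1/2}$ and keep the positivity hypothesis as essential rather than cosmetic.
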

\begin{proof}
  The Jacobian (multiplied by the negative mass matrix) is
  \begin{equation}
    J = Q \diag(\vec{f}').
  \end{equation}
  Thus, it is skew-symmetric w.r.t. the inner product weighted by
  $\vec{f}' > 0$.
\end{proof}

Next, we consider fully upwind SBP methods.
\begin{theorem}
\label{thm:fully-upwind}
  Consider a possibly spatially varying baseflow $\widetilde{u}$ with
  positive speed $f'(\widetilde{u}) > 0$ everywhere.
  Upwind nodal diagonal-norm SBP semidiscretizations of the form
  $\partial_t \vec{u} + D_- \vec{f} = \vec{0}$ in periodic
  domains are linearly stable;
  in particular, the Jacobian has a spectrum in the left half of
  the complex plane.
\end{theorem}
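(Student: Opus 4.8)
The plan is to linearize the scheme $\partial_t \vec{u} + D_- \vec{f} = \vec{0}$ around the baseflow $\widetilde{u}$ and show that the resulting Jacobian, when symmetrized by the diagonal mass matrix, has a negative-semidefinite symmetric part. Linearizing, the Jacobian (up to the sign of the mass matrix, as in the previous two theorems) is $J = -Q_- \diag(\vec{f}'(\widetilde{u}))$ where $Q_- := M D_-$, or equivalently the evolution operator is $A = -D_- \diag(\vec{f}')$. Since $\diag(\vec{f}') > 0$ entrywise, I would work in the inner product weighted by $D := \diag(\vec{f}')^{-1} M$, which is symmetric positive definite because $M$ is diagonal and positive and $\vec{f}' > 0$. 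The point is then to estimate $\langle \vec{v}, A \vec{v} \rangle_D = -\vec{v}^T M D_- \diag(\vec{f}') \vec{v}$; writing $\vec{g} := \diag(\vec{f}') \vec{v}$ this is $-\vec{g}^T M D_- \vec{g}$ (after noting the weighting cancels appropriately — I will need to be careful here and may instead directly bound the numerical range of $A$ with respect to $D$).

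The key step is the algebraic identity coming from the periodic upwind SBP property \eqref{eq:upwind-SBP-periodic}: $M D_+ + D_-^T M = 0$ together with $M(D_+ - D_-)$ negative semidefinite. From these, $M D_- = \frac{1}{2}(M D_- + M D_+) + \frac{1}{2}(M D_- - M D_+) = \frac{1}{2}(M D_- - D_-^T M) - \frac{1}{2} M(D_+ - D_-)$, so the symmetric part of $M D_-$ is $\frac{1}{2}(M D_- + D_-^T M) = -\frac{1}{2} M(D_+ - D_-)$, which is negative semidefinite — equivalently, $\vec{g}^T M D_- \vec{g} = -\frac{1}{2} \vec{g}^T M(D_+ - D_-) \vec{g} \ge 0$ for all real $\vec{g}$. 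Feeding this into the quadratic form above gives $\Re \langle \vec{v}, A \vec{v} \rangle_D \le 0$, so every eigenvalue $\mu$ of $A$ satisfies $\Re \mu \le 0$, i.e., the spectrum lies in the closed left half-plane. I expect the main obstacle to be purely bookkeeping: getting the weighting by $\diag(\vec{f}')$ to interact correctly with the SBP identity, since $D_-$ does not commute with $\diag(\vec{f}')$ for a non-constant baseflow. The clean way around this is to observe that $A = -D_- \diag(\vec{f}')$ is similar to $-\diag(\vec{f}')^{1/2} D_- \diag(\vec{f}')^{1/2}$, and then symmetrize the latter with $M$; the mixed term from the SBP relation still produces $-\frac{1}{2}\diag(\vec{f}')^{1/2} M (D_+ - D_-) \diag(\vec{f}')^{1/2}$, which is negative semidefinite because conjugation by a positive diagonal matrix preserves negative semidefiniteness. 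Since similarity preserves the spectrum, the eigenvalues of $A$ have nonpositive real part, which is the claim.

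I would close by remarking that, unlike the central case, here the symmetric part $-\frac{1}{2} M(D_+ - D_-)$ need not vanish, so the spectrum is genuinely in the \emph{closed} left half-plane rather than on the imaginary axis — this is precisely the built-in dissipation of the upwind operators, and it is the reason these schemes are better behaved than entropy-dissipative central schemes for the Burgers linearization studied in \cite{gassner2022stability}.
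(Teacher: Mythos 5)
Your proposal is correct and takes essentially the same route as the paper: the paper bounds the numerical range of $J=-D_-\diag(\vec{f}')$ in the inner product weighted by $\diag(\vec{f}')M$ (which is exactly your $\diag(\vec{f}')^{1/2}$-similarity followed by the $M$-inner product) and invokes $MD_++D_-^TM=0$ together with negative semidefiniteness of $M(D_+-D_-)$. Two small bookkeeping points: your first choice of weight $\diag(\vec{f}')^{-1}M$ does not make the factors cancel (the weight that works directly is $\diag(\vec{f}')M$), and the sign in your final display is flipped --- the negative semidefinite symmetric part is $+\frac{1}{2}\diag(\vec{f}')^{1/2}M(D_+-D_-)\diag(\vec{f}')^{1/2}$.
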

\begin{proof}
  The Jacobian of the semidiscretization $-D_- \vec{f}$ is
  \begin{equation}
    J = -D_- \diag(\vec{f}').
  \end{equation}
  The spectrum of this operator must be in the left half of the complex
  plane, since for each (possibly complex-valued) vector $\vec{v}$
  \begin{equation}
  \begin{aligned}
    2 \Re \langle \vec{v}, J \vec{v} \rangle_{\diag(\vec{f}') M}
    &=
      \langle \vec{v}, J \vec{v} \rangle_{\diag(\vec{f}') M}
    + \langle J \vec{v}, \vec{v} \rangle_{\diag(\vec{f}') M}
    \\
    &=
    - \vec{v}^* \diag(\vec{f}') M D_- \diag(\vec{f}') \vec{v}
    - \vec{v}^* \diag(\vec{f}') D_-^T M \diag(\vec{f}') \vec{v}
    \\
    &=
    \vec{v}^* \diag(\vec{f}') (-D_-^T M - M D_-) \diag(\vec{f}') \vec{v}
    \\
    &=
    \vec{v}^* \diag(\vec{f}') (M D_+ - M D_-) \diag(\vec{f}') \vec{v}
    \leq 0,
  \end{aligned}
  \end{equation}
  where we used the SBP property and negative semidefiniteness for
  periodic upwind operators \eqref{eq:upwind-SBP-periodic} in the last two steps.
\end{proof}

\begin{remark}
  The assumption of a positive speed $f'(\widetilde{u}) > 0$ is equivalent
  to a positive baseflow $\widetilde{u}$ for Burgers' equation. While this
  is a strong assumption, it is exactly the situation investigated in
  \cite{gassner2022stability} where local linear/energy stability fails
  for entropy-stable methods.
\end{remark}

We get similar results for splittings such as the Lax-Friedrichs splitting,
at least for Burgers' equation.
\begin{theorem}
\label{thm:burgers-LLF-splitting}
  Upwind nodal diagonal-norm SBP semidiscretizations of Burgers' equation with local
  Lax-Friedrichs flux splitting in periodic domains are linearly stable
  around positive states. In particular,
  the Jacobian has a spectrum in the left half of the complex plane.
\end{theorem}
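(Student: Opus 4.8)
The plan is to mirror the dissipativity argument used for Theorem~\ref{thm:fully-upwind}: write the linearized operator explicitly and exhibit a weighted inner product with respect to which it is dissipative, which forces its spectrum into the closed left half-plane.

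First I would make the splitting concrete. For Burgers' equation $f(u) = u^2/2$, the local Lax--Friedrichs splitting uses the local signal speed $\lambda(u) = |f'(u)| = |u|$, so $f^\pm(u) = \tfrac12\bigl( f(u) \pm |u|\, u \bigr)$. Around a positive baseflow $\widetilde{u} > 0$ we have $|u| = u$ in a neighbourhood, hence there $f^+(u) = \tfrac34 u^2$ and $f^-(u) = -\tfrac14 u^2$; these are smooth along the baseflow, with $(f^+)'(\widetilde{u}) = \tfrac32 \widetilde{u}$ and $(f^-)'(\widetilde{u}) = -\tfrac12 \widetilde{u}$. Linearizing the semidiscretization $\partial_t \vec{u} + D_+ \vec{f}^- + D_- \vec{f}^+ = \vec{0}$ about the grid values $\widetilde{\vec{u}}$ and abbreviating $U = \diag(\widetilde{\vec{u}})$, which is diagonal and positive definite, the Jacobian is
\begin{equation*}
  J = -D_+ \diag\bigl( (f^-)'(\widetilde{\vec{u}}) \bigr) - D_- \diag\bigl( (f^+)'(\widetilde{\vec{u}}) \bigr)
    = \tfrac12 D_+ U - \tfrac32 D_- U = \tfrac12 (D_+ - 3 D_-)\, U .
\end{equation*}

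Next I would introduce the weighted inner product $\langle \vec{v}, \vec{w} \rangle_{UM} = \vec{v}^* U M \vec{w}$, which is well defined since $UM$ is diagonal and positive definite; this is exactly the weight $\diag(\vec{f}')M$ of Theorem~\ref{thm:fully-upwind} (here $\vec{f}' = \widetilde{\vec{u}}$) and the discrete analogue of the continuous weight $\widetilde{u}$ that makes $-\partial_x(\widetilde{u}\,\cdot)$ skew-symmetric. Because $U$ and $M$ are diagonal they commute, and a short computation gives
\begin{equation*}
  U M J + J^T U M = \tfrac12\, U \Bigl[ (M D_+ + D_+^T M) - 3\,(M D_- + D_-^T M) \Bigr] U .
\end{equation*}
The periodic upwind SBP relation \eqref{eq:upwind-SBP-periodic}, $M D_+ + D_-^T M = 0$, gives $D_-^T M = -M D_+$ and, transposing, $D_+^T M = -M D_-$; substituting yields $M D_+ + D_+^T M = M(D_+ - D_-)$ and $M D_- + D_-^T M = -M(D_+ - D_-)$, so the bracket collapses to $4\, M(D_+ - D_-)$ and
\begin{equation*}
  U M J + J^T U M = 2\, U\, M(D_+ - D_-)\, U .
\end{equation*}
By the second part of \eqref{eq:upwind-SBP-periodic} the matrix $M(D_+ - D_-)$ is symmetric and negative semidefinite (symmetry being clear since $M(D_+ - D_-) = -D_-^T M - M D_-$), and congruence with the real diagonal matrix $U$ preserves both properties. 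Hence, for any eigenpair $(\mu, \vec{z})$ of $J$,
\begin{equation*}
  0 \ge \vec{z}^* (U M J + J^T U M) \vec{z} = 2 \Re(\mu)\, \| \vec{z} \|_{UM}^2 ,
\end{equation*}
and since $\| \vec{z} \|_{UM}^2 > 0$ we conclude $\Re \mu \le 0$, i.e.\ the spectrum of $J$ lies in the closed left half-plane.

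The point that needs care is the linearization bookkeeping: the ``$\tfrac12(f' \pm \lambda)$'' eigenvalues of a frozen-coefficient Lax--Friedrichs splitting are \emph{not} the relevant derivatives here, since $\lambda(u) = |u|$ is state-dependent, and differentiating $|u|\,u$ is what produces the factor $3$ in $\tfrac12(D_+ - 3 D_-)U$. Tracking the signs so that this factor and the two SBP identities combine into the single negative semidefinite block $2\, U M(D_+ - D_-)\, U$ is the crux; everything else is as in Theorem~\ref{thm:fully-upwind}. Equivalently, one may absorb $U$ via the similarity transformation $J \mapsto U^{1/2} J U^{-1/2} = \tfrac12 U^{1/2}(D_+ - 3 D_-) U^{1/2}$ and argue with the plain mass matrix $M$, reaching the same conclusion.
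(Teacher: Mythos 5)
Your proposal is correct and follows essentially the same route as the paper: the same local Lax--Friedrichs splitting $f^\pm = \tfrac12(u^2/2 \pm |u|u)$ reducing to $\tfrac34 u^2$ and $-\tfrac14 u^2$ for positive states, the same Jacobian $\tfrac12 D_+ U - \tfrac32 D_- U$ (the paper writes it with $\diag((\vec{u}^2)') = 2U$), and the same weighted inner product with weight $\diag(\vec{f}^\pm{}')$-type diagonal times $M$, exploiting $M D_+ + D_-^T M = 0$ and negative semidefiniteness of $M(D_+ - D_-)$. The only cosmetic difference is that you merge the two symmetric parts into the single block $2\,U M (D_+ - D_-)\, U$, whereas the paper keeps them as two separately negative semidefinite brackets.
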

\begin{proof}
  The flux splitting is
  \begin{equation}
      \frac{u^2}{2}
      =
      \frac{1}{2} \left( \frac{u^2}{2} + |u| u \right)
      + \frac{1}{2} \left( \frac{u^2}{2} - |u| u \right).
  \end{equation}
  For positive $\vec{u}$, the semidiscretization is
  \begin{equation}
      \partial_t \vec{u}
      =
      - \frac{3}{4} D_- \vec{u}^2
      + \frac{1}{4} D_+ \vec{u}^2.
  \end{equation}
  The Jacobian of the right-hand side is
  \begin{equation}
    J
    =
    - \frac{3}{4} D_- \diag((\vec{u}^2)')
    + \frac{1}{4} D_+ \diag((\vec{u}^2)').
  \end{equation}
  As in Theorem~\ref{thm:fully-upwind}, we can show that this Jacobian
  has a spectrum in the left half of the complex plane, since for all
  (complex) grid vectors $\vec{v}$
  \begin{equation}
  \begin{aligned}
    2 \Re \langle \vec{v}, J \vec{v} \rangle_{\diag((\vec{u}^2)') M}
    &=
    - \frac{3}{2} \vec{v}^* \diag((\vec{u}^2)') M D_- \diag((\vec{u}^2)') \vec{v}
    + \frac{1}{2} \vec{v}^* \diag((\vec{u}^2)') M D_+ \diag((\vec{u}^2)') \vec{v}
    \\
    &\quad
    - \frac{3}{2} \vec{v}^* \diag((\vec{u}^2)') D_-^T M \diag((\vec{u}^2)') \vec{v}
    + \frac{1}{2} \vec{v}^* \diag((\vec{u}^2)') D_+^T M \diag((\vec{u}^2)') \vec{v}
    \\
    &=
    \frac{3}{2} \vec{v}^* \diag((\vec{u}^2)') \left(
        -M D_- - D_-^T M
    \right) \diag((\vec{u}^2)') \vec{v}
    \\
    &\quad
    + \frac{1}{2} \vec{v}^* \diag((\vec{u}^2)') \left(
        M D_+ + D_+^T M
    \right) \diag((\vec{u}^2)') \vec{v}
  \end{aligned}
  \end{equation}
  Both matrices in brackets are negative semidefinite, since the upwind
  SBP properties guarantee that
  \begin{equation}
    -M D_- - D_-^T M
    =
    M D_+ - M D_-
    =
    M D_+ + D_+^T M
  \end{equation}
  is negative semidefinite.
\end{proof}

\begin{remark}
  The proof of Theorem~\ref{thm:burgers-LLF-splitting} holds for
  scalar conservation laws with homogeneous flux $f(s u) = s^\alpha f(u)$.
  In this case, the flux splitting with positive speeds is
  \begin{equation}
    f_+ = \frac{1}{2} \left( f + f_u u \right)
        = \frac{1 + \alpha}{2} f,
    \quad
    f_- = \frac{1}{2} \left( f - f_u u \right)
        = \frac{1 - \alpha}{2} f,
  \end{equation}
  due to Eulers' theorem.
\end{remark}

\subsection{A special choice of entropy}

We relate standard central schemes
$\partial_t \vec{u} + D \vec{f} = \vec{0}$
to entropy-conservative schemes with a special choice
of entropy function. Consider a scalar conservation law
$\partial_t u + \partial_x f(u) = 0$ with positive wave speeds
$f'(u) > 0$. In this case, the primitive $U(u) = \int^u f(y) \dif y$
of the flux is a convex entropy with entropy flux $F = f^2 / 2$,
cf.~\cite{tadmor1987entropy}.
The associated entropy variable $w = U' = f$ is the flux itself and
a smooth solution yields
\begin{equation}
  \partial_t U + \partial_x F
  =
  \partial_t U + \partial_x \frac{f^2}{2}
  =
  f \cdot (\partial_t u + \partial_x f)
  =
  0.
\end{equation}
Thus, the entropy-conservative numerical flux of Tadmor is given by
the central flux since
\begin{equation}
  (f_r - f_l) \frac{f_r + f_l}{2} = \frac{f_r^2}{2} - \frac{f_l^2}{2},
\end{equation}
where we have used the entropy flux potential
\begin{equation}
  \psi = w \cdot f - F = f^2 - \frac{f^2}{2} = \frac{f^2}{2}.
\end{equation}
Finally, flux differencing methods using the central numerical flux
are equivalent to the central discretization
$\partial_t \vec{u} + D \vec{f} = \vec{0}$.
Thus, nonlinear entropy stability and local linear/energy stability
can be combined in this very special situation.

\begin{remark}
  The dissipation introduced by upwind SBP operators is compatible with the
  structure of the local linear/energy stability estimate. In particular,
  the dissipation introduced compared to a central scheme dissipates the
  entropy $U(u) = \int^u f(y) \dif y$, since
  \begin{equation}
    \vec{f}^T M (-D_- \vec{f})
    =
    - \vec{f}^T M \frac{D_+ + D_-}{2} \vec{f}
    + \vec{f}^T M \frac{D_+ - D_-}{2} \vec{f}
    \leq
    -\vec{f}^T M \frac{D_+ + D_-}{2} \vec{f}.
  \end{equation}
  The central scheme with operator $(D_+ + D_-) / 2$ conserves this entropy,
  so the total upwind scheme is entropy-dissipative.
\end{remark}

\subsection{Discussion}

Following \cite{gassner2022stability}, three desirable properties of
numerical methods are
i) nonlinear entropy stability,
ii) local linear/energy stability, and
iii) high-order accuracy.
There have been substantial discussions in the high-order community about
these properties. Clearly, central-type schemes can be high-order accurate
and just satisfy local linear/energy stability without any dissipation.
We have shown that upwind SBP methods can have the same properties while
coming with some built-in dissipation. However, it is unclear whether they
have some nonlinear/entropy stability properties besides the special,
academic choice of entropy in the previous subsection. Clearly, first-order
methods such as Godunov's method can have both nonlinear entropy and
local linear/energy stability properties. It is an open question whether
higher-order methods can have both of these stability properties as well.

\section{Numerical experiments}
\label{sec:experiments}

We use the Julia programming language \cite{bezanson2017julia} for the numerical experiments.
Time integration is performed using Runge-Kutta methods implemented in
OrdinaryDiffEq.jl \cite{rackauckas2017differentialequations}
(specific choices of the Runge-Kutta methods are stated below).
The spatial discretizations are available in Trixi.jl
\cite{ranocha2022adaptive,schlottkelakemper2021purely}.
All numerical experiments presented in this section use diagonal-norm
upwind SBP operators of \cite{mattsson2017diagonal} available from
SummationByPartsOperators.jl \cite{ranocha2021sbp} (unless stated
otherwise).
Some of the unstructured curvilinear quadrilateral meshes were constructed
with HOHQMesh.jl\footnote{https://github.com/trixi-framework/HOHQMesh.jl}.
We use Plots.jl \cite{christ2023plots} and ParaView \cite{ahrens2005paraview}
to visualize the results.
All source code required to reproduce the numerical experiments is
available online in our reproducibility repository
\cite{ranocha2023highRepro}.

\subsection{Convergence experiments with linear advection}
\label{sec:convergence_advection}

First, we consider the linear advection equation
\begin{equation}
\begin{aligned}
  \partial_t u(t, x) + \partial_x u(t, x) &= 0, && t \in (0, 5), x \in (-1, 1),
  \\
  u(0, x) &= \sin(\pi x), && x \in [-1, 1],
\end{aligned}
\end{equation}
with periodic boundary conditions. We use the classical Lax-Friedrichs flux
vector splitting with $\lambda = 1$, i.e.,
\begin{equation}
  f^-(u) = 0, \quad f^+(u) = u.
\end{equation}

\begin{table}[htbp]
\centering
  \caption{Convergence results using upwind SBP discretizations
           of the linear advection equation with Lax-Friedrichs splitting,
           $K$ elements, $N$ nodes per element, and an interior order
           of accuracy 2.}
  \label{tab:convergence_advection_2}
  \begin{tabular}{rrrr}
    \toprule
    $K$ & $N$ & $L^2$ error & EOC \\
    \midrule
      1 &  20 & \num{3.46e-01} &  \\
      2 &  20 & \num{9.24e-02} & 1.91 \\
      4 &  20 & \num{2.33e-02} & 1.99 \\
      8 &  20 & \num{5.83e-03} & 2.00 \\
    16 &  20 & \num{1.46e-03} & 2.00 \\
    32 &  20 & \num{3.64e-04} & 2.00 \\
    64 &  20 & \num{9.11e-05} & 2.00 \\
    128 &  20 & \num{2.28e-05} & 2.00 \\
    \bottomrule
  \end{tabular}
  \hspace{1cm}
  \begin{tabular}{rrrr}
    \toprule
    $K$ & $N$ & $L^2$ error & EOC \\
    \midrule
    4 &  10 & \num{9.32e-02} &  \\
    4 &  20 & \num{2.33e-02} & 2.00 \\
    4 &  40 & \num{5.77e-03} & 2.01 \\
    4 &  80 & \num{1.44e-03} & 2.01 \\
    4 & 160 & \num{3.58e-04} & 2.00 \\
    4 & 320 & \num{8.94e-05} & 2.00 \\
    4 & 640 & \num{2.23e-05} & 2.00 \\
    4 & 1280 & \num{5.58e-06} & 2.00 \\
    \bottomrule
  \end{tabular}
\end{table}

\begin{table}[htbp]
\centering
  \caption{Convergence results using upwind SBP discretizations
           of the linear advection equation with Lax-Friedrichs splitting,
           $K$ elements, $N$ nodes per element, and an interior order
           of accuracy 3.}
  \label{tab:convergence_advection_3}
  \begin{tabular}{rrrr}
    \toprule
    $K$ & $N$ & $L^2$ error & EOC \\
    \midrule
      1 &  20 & \num{3.40e-02} &  \\
      2 &  20 & \num{4.93e-03} & 2.78 \\
      4 &  20 & \num{8.73e-04} & 2.50 \\
      8 &  20 & \num{1.87e-04} & 2.22 \\
    16 &  20 & \num{4.47e-05} & 2.07 \\
    32 &  20 & \num{1.11e-05} & 2.01 \\
    64 &  20 & \num{2.76e-06} & 2.00 \\
    128 &  20 & \num{6.90e-07} & 2.00 \\
    \bottomrule
  \end{tabular}
  \hspace{1cm}
  \begin{tabular}{rrrr}
    \toprule
    $K$ & $N$ & $L^2$ error & EOC \\
    \midrule
    4 &  10 & \num{7.98e-03} &  \\
    4 &  20 & \num{8.73e-04} & 3.19 \\
    4 &  40 & \num{1.05e-04} & 3.06 \\
    4 &  80 & \num{1.34e-05} & 2.96 \\
    4 & 160 & \num{1.83e-06} & 2.87 \\
    4 & 320 & \num{2.68e-07} & 2.77 \\
    4 & 640 & \num{4.20e-08} & 2.68 \\
    4 & 1280 & \num{6.91e-09} & 2.60 \\
    \bottomrule
  \end{tabular}
\end{table}

\begin{table}[htbp]
\centering
  \caption{Convergence results using upwind SBP discretizations
           of the linear advection equation with Lax-Friedrichs splitting,
           $K$ elements, $N$ nodes per element, and an interior order
           of accuracy 4.}
  \label{tab:convergence_advection_4}
  \begin{tabular}{rrrr}
    \toprule
    $K$ & $N$ & $L^2$ error & EOC \\
    \midrule
      1 &  20 & \num{5.03e-03} &  \\
      2 &  20 & \num{3.96e-04} & 3.67 \\
      4 &  20 & \num{3.19e-05} & 3.63 \\
      8 &  20 & \num{3.66e-06} & 3.12 \\
    16 &  20 & \num{4.51e-07} & 3.02 \\
    32 &  20 & \num{5.57e-08} & 3.02 \\
    64 &  20 & \num{6.96e-09} & 3.00 \\
    128 &  20 & \num{8.73e-10} & 3.00 \\
    \bottomrule
  \end{tabular}
  \hspace{1cm}
  \begin{tabular}{rrrr}
    \toprule
    $K$ & $N$ & $L^2$ error & EOC \\
    \midrule
    4 &  10 & \num{3.86e-04} &  \\
    4 &  20 & \num{3.19e-05} & 3.60 \\
    4 &  40 & \num{2.57e-06} & 3.63 \\
    4 &  80 & \num{2.10e-07} & 3.61 \\
    4 & 160 & \num{1.77e-08} & 3.57 \\
    4 & 320 & \num{1.51e-09} & 3.55 \\
    4 & 640 & \num{1.30e-10} & 3.54 \\
    4 & 1280 & \num{1.17e-11} & 3.48 \\
    \bottomrule
  \end{tabular}
\end{table}

\begin{table}[htbp]
\centering
  \caption{Convergence results using upwind SBP discretizations
           of the linear advection equation with Lax-Friedrichs splitting,
           $K$ elements, $N$ nodes per element, and an interior order
           of accuracy 5.}
  \label{tab:convergence_advection_5}
  \begin{tabular}{rrrr}
    \toprule
    $K$ & $N$ & $L^2$ error & EOC \\
    \midrule
      1 &  20 & \num{3.49e-03} &  \\
      2 &  20 & \num{3.77e-04} & 3.21 \\
      4 &  20 & \num{3.31e-05} & 3.51 \\
      8 &  20 & \num{4.09e-06} & 3.01 \\
    16 &  20 & \num{5.13e-07} & 3.00 \\
    32 &  20 & \num{6.43e-08} & 3.00 \\
    64 &  20 & \num{8.02e-09} & 3.00 \\
    128 &  20 & \num{1.01e-09} & 2.99 \\
    \bottomrule
  \end{tabular}
  \hspace{1cm}
  \begin{tabular}{rrrr}
    \toprule
    $K$ & $N$ & $L^2$ error & EOC \\
    \midrule
    4 &  10 & \num{5.17e-04} &  \\
    4 &  20 & \num{3.31e-05} & 3.97 \\
    4 &  40 & \num{2.64e-06} & 3.65 \\
    4 &  80 & \num{2.23e-07} & 3.56 \\
    4 & 160 & \num{1.93e-08} & 3.53 \\
    4 & 320 & \num{1.69e-09} & 3.51 \\
    4 & 640 & \num{1.51e-10} & 3.48 \\
    4 & 1280 & \num{1.34e-11} & 3.49 \\
    \bottomrule
  \end{tabular}
\end{table}

We use the fourth-order accurate Runge-Kutta method of
\cite{ranocha2021optimized} with error-based step size control and a
sufficiently small tolerance to integrate the semidiscretizations in time.
We measure the discrete $L^2$ error using the quadrature rule induced by
the mass matrix $M$. Results of these convergence experiments, including the
experimental order of convergence (EOC), are shown in
Tables~\ref{tab:convergence_advection_2}--\ref{tab:convergence_advection_5}.

When used in DG refinement mode, i.e., increasing the number of elements
while keeping the number of nodes per element constant,
the methods with an interior order of accuracy $p$ converge with an EOC of
$\lfloor p / 2 + 1 \rfloor$.
When used in FD refinement mode, i.e., increasing the number of nodes per
element while keeping the number of elements constant,
the methods with an interior order of accuracy $p$ converge with an EOC of
roughly $\max(p, \lfloor p / 2 + 1 \rfloor + 1/2)$.

\subsection{Convergence experiments with the compressible Euler equations}
\label{sec:convergence_euler}

Next, we investigate the experimental order of convergence for the upwind SBP
framework with different flux vector splittings in one and two space dimensions.
The one-dimensional results are presented in Section~\ref{sec:1d_conv} and
the two-dimensional results on unstructured curvilinear meshes are given in
Section~\ref{sec:2d_conv}.

\subsubsection{One spatial dimension}
\label{sec:1d_conv}

Consider the 1D compressible Euler equations
\begin{equation}
  \partial_t \begin{pmatrix} \rho \\ \rho v \\ \rho e \end{pmatrix}
  + \partial_x \begin{pmatrix} \rho v \\ \rho v^2 + p \\ (\rho e + p) v \end{pmatrix}
  = 0
\end{equation}
of an ideal gas with density $\rho$, velocity $v$, total energy density
$\rho e$, and pressure
\begin{equation}
  p = (\gamma - 1) \left( \rho e - \frac{1}{2} \rho v^2 \right),
\end{equation}
where the ratio of specific heats is chosen as $\gamma = 1.4$. We add a source
term to create the manufactured solution
\begin{equation}
  \rho(t, x) = h(t, x), \quad v(t, x) = 1, \quad \rho e(t, x) = h(t, x)^2,
\end{equation}
with
\begin{equation}
  h(t, x) = 2 + 0.1 \sin\bigl( \pi (x - t) \bigr)
\end{equation}
for $t \in [0, 2]$ and $x \in [0, 2]$. We use the flux vector splittings introduced in
Examples~\ref{ex:Steger-Warming} and \ref{ex:van-Leer-Hanel}.

\begin{table}[htbp]
\centering
  \caption{Convergence results using upwind SBP discretizations
           of the compressible Euler equations with
           $K$ elements, $N$ nodes per element, and an interior order
           of accuracy 2.}
  \label{tab:convergence_euler_2}
  \begin{subtable}{0.32\textwidth}
  \centering
    \caption{van Leer-Hänel spl. \cite{vanleer1982flux,hanel1987accuracy,liou1991high}.}
    \begin{tabular}{rrrr}
      \toprule
      $K$ & $N$ & $L^2$ error & EOC \\
      \midrule
        1 &  20 & \num{1.01e-02} &  \\
        2 &  20 & \num{2.94e-03} & 1.78 \\
        4 &  20 & \num{7.55e-04} & 1.96 \\
        8 &  20 & \num{1.91e-04} & 1.98 \\
      16 &  20 & \num{4.79e-05} & 2.00 \\
      32 &  20 & \num{1.19e-05} & 2.01 \\
      64 &  20 & \num{2.98e-06} & 2.00 \\
      128 &  20 & \num{7.45e-07} & 2.00 \\
      \bottomrule
    \end{tabular}
  \end{subtable}%
  \hspace{\fill}
  \begin{subtable}{0.32\textwidth}
  \centering
    \caption{Steger-Warming splitting \cite{steger1979flux}.}
    \begin{tabular}{rrrr}
      \toprule
      $K$ & $N$ & $L^2$ error & EOC \\
      \midrule
        1 &  20 & \num{1.02e-02} &  \\
        2 &  20 & \num{2.95e-03} & 1.79 \\
        4 &  20 & \num{7.59e-04} & 1.96 \\
        8 &  20 & \num{1.92e-04} & 1.99 \\
      16 &  20 & \num{4.79e-05} & 2.00 \\
      32 &  20 & \num{1.19e-05} & 2.01 \\
      64 &  20 & \num{2.98e-06} & 2.00 \\
      128 &  20 & \num{7.46e-07} & 2.00 \\
      \bottomrule
    \end{tabular}
  \end{subtable}%
  \hspace{\fill}
  \begin{subtable}{0.32\textwidth}
  \centering
    \caption{Steger-Warming splitting \cite{steger1979flux}.}
    \begin{tabular}{rrrr}
      \toprule
      $K$ & $N$ & $L^2$ error & EOC \\
      \midrule
      4 &  10 & \num{2.97e-03} &  \\
      4 &  20 & \num{7.59e-04} & 1.97 \\
      4 &  40 & \num{1.89e-04} & 2.01 \\
      4 &  80 & \num{4.69e-05} & 2.01 \\
      4 & 160 & \num{1.17e-05} & 2.01 \\
      4 & 320 & \num{2.92e-06} & 2.00 \\
      4 & 640 & \num{7.29e-07} & 2.00 \\
      4 & 1280 & \num{1.82e-07} & 2.00 \\
      \bottomrule
    \end{tabular}
  \end{subtable}%
\end{table}

\begin{table}[htbp]
\centering
  \caption{Convergence results using upwind SBP discretizations
           of the compressible Euler equations with
           $K$ elements, $N$ nodes per element, and an interior order
           of accuracy 3.}
  \label{tab:convergence_euler_3}
  \begin{subtable}{0.32\textwidth}
  \centering
    \caption{van Leer-Hänel spl. \cite{vanleer1982flux,hanel1987accuracy,liou1991high}.}
    \begin{tabular}{rrrr}
      \toprule
      $K$ & $N$ & $L^2$ error & EOC \\
      \midrule
        1 &  20 & \num{1.14e-03} &  \\
        2 &  20 & \num{1.75e-04} & 2.70 \\
        4 &  20 & \num{4.41e-05} & 1.99 \\
        8 &  20 & \num{1.21e-05} & 1.86 \\
      16 &  20 & \num{3.16e-06} & 1.94 \\
      32 &  20 & \num{5.96e-07} & 2.41 \\
      64 &  20 & \num{1.43e-07} & 2.06 \\
      128 &  20 & \num{3.43e-08} & 2.06 \\
      \bottomrule
    \end{tabular}
  \end{subtable}%
  \hspace{\fill}
  \begin{subtable}{0.32\textwidth}
  \centering
    \caption{Steger-Warming splitting \cite{steger1979flux}.}
    \begin{tabular}{rrrr}
      \toprule
      $K$ & $N$ & $L^2$ error & EOC \\
      \midrule
        1 &  20 & \num{1.18e-03} &  \\
        2 &  20 & \num{1.83e-04} & 2.69 \\
        4 &  20 & \num{4.51e-05} & 2.02 \\
        8 &  20 & \num{1.21e-05} & 1.90 \\
      16 &  20 & \num{3.05e-06} & 1.98 \\
      32 &  20 & \num{5.59e-07} & 2.45 \\
      64 &  20 & \num{1.35e-07} & 2.05 \\
      128 &  20 & \num{3.22e-08} & 2.07 \\
      \bottomrule
    \end{tabular}
  \end{subtable}%
  \hspace{\fill}
  \begin{subtable}{0.32\textwidth}
  \centering
    \caption{Steger-Warming splitting \cite{steger1979flux}.}
    \begin{tabular}{rrrr}
      \toprule
      $K$ & $N$ & $L^2$ error & EOC \\
      \midrule
      4 &  10 & \num{2.97e-04} &  \\
      4 &  20 & \num{4.51e-05} & 2.72 \\
      4 &  40 & \num{7.56e-06} & 2.58 \\
      4 &  80 & \num{1.16e-06} & 2.70 \\
      4 & 160 & \num{1.78e-07} & 2.70 \\
      4 & 320 & \num{2.81e-08} & 2.66 \\
      4 & 640 & \num{4.59e-09} & 2.62 \\
      4 & 1280 & \num{7.69e-10} & 2.58 \\
      \bottomrule
    \end{tabular}
  \end{subtable}%
\end{table}

\begin{table}[htbp]
\centering
  \caption{Convergence results using upwind SBP discretizations
           of the compressible Euler equations with
           $K$ elements, $N$ nodes per element, and an interior order
           of accuracy 4.}
  \label{tab:convergence_euler_4}
  \begin{subtable}{0.32\textwidth}
  \centering
    \caption{van Leer-Hänel spl. \cite{vanleer1982flux,hanel1987accuracy,liou1991high}.}
    \begin{tabular}{rrrr}
      \toprule
      $K$ & $N$ & $L^2$ error & EOC \\
      \midrule
        1 &  20 & \num{3.01e-04} &  \\
        2 &  20 & \num{3.51e-05} & 3.10 \\
        4 &  20 & \num{4.14e-06} & 3.08 \\
        8 &  20 & \num{5.84e-07} & 2.83 \\
      16 &  20 & \num{6.60e-08} & 3.14 \\
      32 &  20 & \num{6.68e-09} & 3.31 \\
      64 &  20 & \num{8.24e-10} & 3.02 \\
      128 &  20 & \num{9.63e-11} & 3.10 \\
      \bottomrule
    \end{tabular}
  \end{subtable}%
  \hspace{\fill}
  \begin{subtable}{0.32\textwidth}
  \centering
    \caption{Steger-Warming splitting \cite{steger1979flux}.}
    \begin{tabular}{rrrr}
      \toprule
      $K$ & $N$ & $L^2$ error & EOC \\
      \midrule
        1 &  20 & \num{2.30e-04} &  \\
        2 &  20 & \num{2.71e-05} & 3.09 \\
        4 &  20 & \num{4.21e-06} & 2.69 \\
        8 &  20 & \num{5.88e-07} & 2.84 \\
      16 &  20 & \num{6.50e-08} & 3.18 \\
      32 &  20 & \num{6.53e-09} & 3.32 \\
      64 &  20 & \num{7.94e-10} & 3.04 \\
      128 &  20 & \num{9.17e-11} & 3.11 \\
      \bottomrule
    \end{tabular}
  \end{subtable}%
  \hspace{\fill}
  \begin{subtable}{0.32\textwidth}
  \centering
    \caption{Steger-Warming splitting \cite{steger1979flux}.}
    \begin{tabular}{rrrr}
      \toprule
      $K$ & $N$ & $L^2$ error & EOC \\
      \midrule
      4 &  10 & \num{3.72e-05} &  \\
      4 &  20 & \num{4.21e-06} & 3.14 \\
      4 &  40 & \num{3.66e-07} & 3.52 \\
      4 &  80 & \num{2.66e-08} & 3.78 \\
      4 & 160 & \num{1.95e-09} & 3.77 \\
      4 & 320 & \num{1.60e-10} & 3.61 \\
      4 & 640 & \num{1.37e-11} & 3.54 \\
      4 & 1280 & \num{2.05e-12} & 2.74 \\
      \bottomrule
    \end{tabular}
  \end{subtable}%
\end{table}

\begin{table}[htbp]
\centering
  \caption{Convergence results using upwind SBP discretizations
           of the compressible Euler equations with
           $K$ elements, $N$ nodes per element, and an interior order
           of accuracy 5.}
  \label{tab:convergence_euler_5}
  \begin{subtable}{0.32\textwidth}
  \centering
    \caption{van Leer-Hänel spl. \cite{vanleer1982flux,hanel1987accuracy,liou1991high}.}
    \begin{tabular}{rrrr}
      \toprule
      $K$ & $N$ & $L^2$ error & EOC \\
      \midrule
        1 &  20 & \num{2.05e-04} &  \\
        2 &  20 & \num{3.29e-05} & 2.64 \\
        4 &  20 & \num{3.74e-06} & 3.14 \\
        8 &  20 & \num{5.15e-07} & 2.86 \\
      16 &  20 & \num{5.99e-08} & 3.10 \\
      32 &  20 & \num{6.97e-09} & 3.10 \\
      64 &  20 & \num{8.67e-10} & 3.01 \\
      128 &  20 & \num{1.05e-10} & 3.04 \\
      \bottomrule
    \end{tabular}
  \end{subtable}%
  \hspace{\fill}
  \begin{subtable}{0.32\textwidth}
  \centering
    \caption{Steger-Warming splitting \cite{steger1979flux}.}
    \begin{tabular}{rrrr}
      \toprule
      $K$ & $N$ & $L^2$ error & EOC \\
      \midrule
        1 &  20 & \num{1.17e-04} &  \\
        2 &  20 & \num{2.44e-05} & 2.26 \\
        4 &  20 & \num{4.13e-06} & 2.56 \\
        8 &  20 & \num{5.20e-07} & 2.99 \\
      16 &  20 & \num{5.85e-08} & 3.15 \\
      32 &  20 & \num{6.78e-09} & 3.11 \\
      64 &  20 & \num{8.38e-10} & 3.02 \\
      128 &  20 & \num{1.02e-10} & 3.04 \\
      \bottomrule
    \end{tabular}
  \end{subtable}%
  \hspace{\fill}
  \begin{subtable}{0.32\textwidth}
  \centering
    \caption{Steger-Warming splitting \cite{steger1979flux}.}
    \begin{tabular}{rrrr}
      \toprule
      $K$ & $N$ & $L^2$ error & EOC \\
      \midrule
      4 &  10 & \num{4.20e-05} &  \\
      4 &  20 & \num{4.13e-06} & 3.35 \\
      4 &  40 & \num{3.28e-07} & 3.66 \\
      4 &  80 & \num{2.53e-08} & 3.69 \\
      4 & 160 & \num{2.05e-09} & 3.63 \\
      4 & 320 & \num{1.75e-10} & 3.55 \\
      4 & 640 & \num{1.52e-11} & 3.52 \\
      4 & 1280 & \num{1.63e-12} & 3.23 \\
      \bottomrule
    \end{tabular}
  \end{subtable}%
\end{table}

The convergence results for the compressible Euler equations shown in
Tables~\ref{tab:convergence_euler_2}--\ref{tab:convergence_euler_5}
confirm that the behavior of the experimental order of convergence
observed earlier for the linear advection equation remains the same for
a nonlinear hyperbolic system.

\subsubsection{Two spatial dimensions}
\label{sec:2d_conv}

Next, consider the 2D compressible Euler equations
\begin{equation}
  \label{eq:2d_comp_euler}
  \partial_t \begin{pmatrix} \rho \\ \rho v_1 \\ \rho v_2 \\ \rho e \end{pmatrix}
  + \partial_x \begin{pmatrix} \rho v_1 \\ \rho v_1^2 + p \\ \rho v_1 v_2 \\ (\rho e + p) v_1 \end{pmatrix}
  + \partial_y \begin{pmatrix} \rho v_2 \\ \rho v_1 v_2 \\ \rho v_2^2 + p \\ (\rho e + p) v_2 \end{pmatrix}
  = 0
\end{equation}
of an ideal gas with density $\rho$, velociteis $v_1$, $v_2$, total energy density
$\rho e$, and pressure
\begin{equation}
  p = (\gamma - 1) \left( \rho e - \frac{1}{2} \rho (v_1^2 + v_2^2) \right),
\end{equation}
where the ratio of specific heats is chosen as $\gamma = 1.4$. We add a source
term to create the manufactured solution
\begin{equation}
  \rho(t, x) = h(t, x), \quad v_1(t, x) = v_2(t, x) = 1, \quad \rho e(t, x) = h(t, x)^2,
\end{equation}
with
\begin{equation}
  h(t, x) = 2 + 0.1 \sin\bigl( \sqrt{2}\pi (x - t) \bigr)
\end{equation}
for $t \in [0, 2]$, $x \in [0, \sqrt{2}]^2$, and periodic boundary conditions.
The full expressions of the source terms and all code required to reproduce
these experiments is available in our reproducibility repository
\cite{ranocha2023highRepro}.

We subdivide the domain $[0,\sqrt{2}]^2$ with 16 non-overlapping quadrilateral elements.
For these convergence tests we consider two unstructured meshes, one with only bi-linear elements
and the other containing internal element boundaries approximated with quadratic polynomials.
Moreover, we design these meshes such that several neighboring elements have flipped local
coordinate systems, as is possible in unstructured mesh computations. Even so, the domain
discretized with either mesh remains periodic as required by the manufactured solution setup.
The two meshes used for the convergence testing are given in Figure~\ref{fig:conv_meshes}.

\begin{figure}[htbp]
\centering
  \begin{subfigure}{0.44\textwidth}
  \centering
    \includegraphics[width=\textwidth]{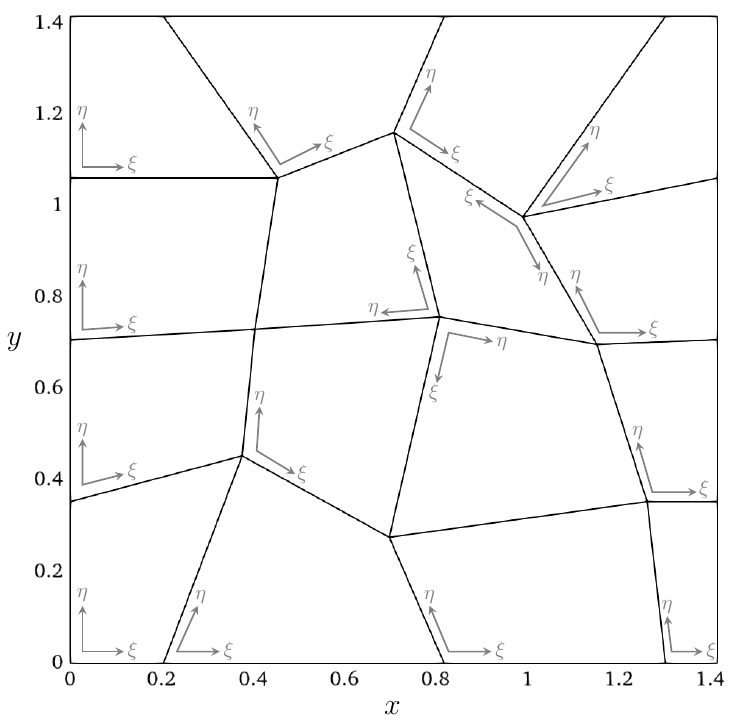}
    \caption{Mesh with bi-linear elements.}
  \end{subfigure}%
  \hspace*{\fill}
  \begin{subfigure}{0.44\textwidth}
  \centering
    \includegraphics[width=\textwidth]{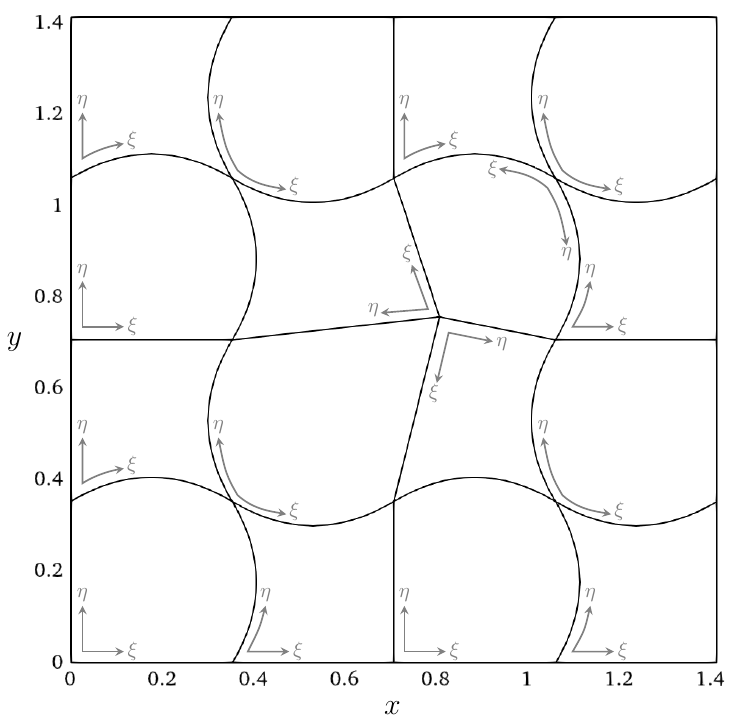}
    \caption{Mesh with (possibly) quadratic elements.}
  \end{subfigure}%
  \caption{Non-overlapping quadrilateral meshes used for the convergence testing on unstructured meshes.
  The local coordinate axes on each element denoted with $\xi$ and $\eta$ demonstrate that several elements
  have flipped local coordinate systems with respect to their neighbor elements.}
  \label{fig:conv_meshes}
\end{figure}

We use the manufactured solution described above to compute the experimental order of convergence
for the Lax-Friedrichs, Drikakis-Tsangaris, and van Leer-Hänel splittings on both meshes given
in Figure~\ref{fig:conv_meshes}. In particular, we use the bi-linear unstructured mesh from Figure~\ref{fig:conv_meshes}(a)
to test convergence of the 4-2 and 6-3 upwind SBP operators and the quadratic unstructured mesh shown
in Figure~\ref{fig:conv_meshes}(b) to test the convergence of the 8-4 upwind SBP operator.

\begin{table}[htbp]
\centering
  \caption{Convergence results using curvilinear upwind SBP discretizations
           for the compressible Euler equations with
           $K$ elements, $N$ nodes per element, and an interior order
           of accuracy 4 on the unstructured bi-linear mesh
           shown in Figure~\ref{fig:conv_meshes}(a).}
  \label{tab:convergence_euler_curved1}
  \begin{subtable}{0.32\textwidth}
  \centering
    \caption{local Lax-Friedrichs spl. Ex.~\ref{ex:llf_splitting}.}
    \begin{tabular}{rrrr}
      \toprule
      $K$ & $N$ & $L^2$ error & EOC \\
      \midrule
   16 &  17 & \num{1.92e-04} &  \\
   16 &  34 & \num{1.60e-05} & 3.58 \\
   16 &  68 & \num{1.46e-06} & 3.45 \\
   16 & 136 & \num{1.53e-07} & 3.26 \\
   16 & 272 & \num{1.66e-08} & 3.20 \\
    \bottomrule
    \end{tabular}
  \end{subtable}%
  \hspace{\fill}
  \begin{subtable}{0.32\textwidth}
  \centering
    \caption{Drikakis-Tsangaris splitting \cite{drikakis1993solution}.}
    \begin{tabular}{rrrr}
      \toprule
      $K$ & $N$ & $L^2$ error & EOC \\
      \midrule
   16 &  17 & \num{1.35e-04} & \\
   16 &  34 & \num{1.18e-05} & 3.52 \\
   16 &  68 & \num{1.14e-06} & 3.36 \\
   16 & 136 & \num{1.18e-07} & 3.27 \\
   16 & 272 & \num{1.30e-08} & 3.18 \\
    \bottomrule
    \end{tabular}
  \end{subtable}%
  \hspace{\fill}
  \begin{subtable}{0.32\textwidth}
  \centering
    \caption{van Leer-Hänel splitting \cite{anderson1986comparison}.}
    \begin{tabular}{rrrr}
      \toprule
      $K$ & $N$ & $L^2$ error & EOC \\
      \midrule
   16 &  17 & \num{9.21e-05} & \\
   16 &  34 & \num{8.07e-06} & 3.51 \\
   16 &  68 & \num{8.15e-07} & 3.31 \\
   16 & 136 & \num{8.19e-08} & 3.31 \\
   16 & 272 & \num{8.83e-09} & 3.21 \\
   \bottomrule
    \end{tabular}
  \end{subtable}%
\end{table}

\begin{table}[htbp]
\centering
  \caption{Convergence results using curvilinear upwind SBP discretizations
           for the compressible Euler equations with
           $K$ elements, $N$ nodes per element, and an interior order
           of accuracy 6 on the unstructured bi-linear mesh
           shown in Figure~\ref{fig:conv_meshes}(a).}
  \label{tab:convergence_euler_curved2}
  \begin{subtable}{0.32\textwidth}
  \centering
    \caption{local Lax-Friedrichs spl. Ex.~\ref{ex:llf_splitting}.}
    \begin{tabular}{rrrr}
      \toprule
      $K$ & $N$ & $L^2$ error & EOC \\
      \midrule
   16 &  17 & \num{1.95e-05} & \\
   16 &  34 & \num{9.26e-07} & 4.40 \\
   16 &  68 & \num{4.71e-08} & 4.30 \\
   16 & 136 & \num{2.46e-09} & 4.26 \\
   16 & 272 & \num{1.32e-10} & 4.22 \\
      \bottomrule
    \end{tabular}
  \end{subtable}%
  \hspace{\fill}
  \begin{subtable}{0.32\textwidth}
  \centering
    \caption{Drikakis-Tsangaris splitting \cite{drikakis1993solution}.}
    \begin{tabular}{rrrr}
      \toprule
      $K$ & $N$ & $L^2$ error & EOC \\
      \midrule
   16 &  17 & \num{2.15e-05} & \\
   16 &  34 & \num{1.04e-06} & 4.37 \\
   16 &  68 & \num{5.75e-08} & 4.17 \\
   16 & 136 & \num{3.06e-09} & 4.23 \\
   16 & 272 & \num{1.65e-10} & 4.21 \\
    \bottomrule
    \end{tabular}
  \end{subtable}%
  \hspace{\fill}
  \begin{subtable}{0.32\textwidth}
  \centering
    \caption{van Leer-Hänel splitting \cite{anderson1986comparison}.}
    \begin{tabular}{rrrr}
      \toprule
      $K$ & $N$ & $L^2$ error & EOC \\
      \midrule
   16 &  17 & \num{2.33e-05} & \\
   16 &  34 & \num{1.15e-06} & 4.34 \\
   16 &  68 & \num{6.74e-08} & 4.09 \\
   16 & 136 & \num{3.70e-09} & 4.19 \\
   16 & 272 & \num{2.06e-10} & 4.17 \\
   \bottomrule
    \end{tabular}
  \end{subtable}%
\end{table}

\begin{table}[htbp]
\centering
  \caption{Convergence results using curvilinear upwind SBP discretizations
           for the compressible Euler equations with
           $K$ elements, $N$ nodes per element, and an interior order
           of accuracy 8 on the unstructured quadratic curvilinear mesh
           shown in Figure~\ref{fig:conv_meshes}(b).}
  \label{tab:convergence_euler_curved3}
  \begin{subtable}{0.32\textwidth}
  \centering
    \caption{local Lax-Friedrichs spl. Ex.~\ref{ex:llf_splitting}.}
    \begin{tabular}{rrrr}
      \toprule
      $K$ & $N$ & $L^2$ error & EOC \\
      \midrule
   16 &  17 & \num{1.71e-05} & \\
   16 &  34 & \num{4.16e-07} & 5.36 \\
   16 &  68 & \num{1.17e-08} & 5.15 \\
   16 & 136 & \num{3.64e-10} & 5.01 \\
   16 & 272 & \num{1.15e-11} & 4.99 \\
      \bottomrule
    \end{tabular}
  \end{subtable}%
  \hspace{\fill}
  \begin{subtable}{0.32\textwidth}
  \centering
    \caption{Drikakis-Tsangaris splitting \cite{drikakis1993solution}.}
    \begin{tabular}{rrrr}
      \toprule
      $K$ & $N$ & $L^2$ error & EOC \\
      \midrule
   16 &  17 & \num{1.71e-05} & \\
   16 &  34 & \num{4.40e-07} & 5.28 \\
   16 &  68 & \num{1.32e-08} & 5.06 \\
   16 & 136 & \num{4.33e-10} & 4.93 \\
   16 & 272 & \num{1.33e-11} & 5.02 \\
      \bottomrule
    \end{tabular}
  \end{subtable}%
  \hspace{\fill}
  \begin{subtable}{0.32\textwidth}
  \centering
    \caption{van Leer-Hänel splitting \cite{anderson1986comparison}.}
    \begin{tabular}{rrrr}
      \toprule
      $K$ & $N$ & $L^2$ error & EOC \\
      \midrule
   16 &  17 & \num{1.66e-05} & \\
   16 &  34 & \num{4.51e-07} & 5.20 \\
   16 &  68 & \num{1.42e-08} & 4.99 \\
   16 & 136 & \num{4.84e-10} & 4.88 \\
   16 & 272 & \num{1.52e-11} & 4.99 \\
      \bottomrule
    \end{tabular}
  \end{subtable}%
\end{table}

The convergence results for the compressible Euler equations shown in
Tables~\ref{tab:convergence_euler_curved1}--\ref{tab:convergence_euler_curved3}
confirm the behavior of the experimental order of convergence
observed for the earlier one-dimensional convergence tests.

\subsection{Spectral analysis}

We consider the same linear advection setup with periodic boundary conditions
as in Section~\ref{sec:convergence_advection} and compute the spectra of the
semidiscretizations. The results visualized in Figure~\ref{fig:spectra}
demonstrate the linear stability of the upwind discretizations,
since the spectra are contained in the left half of the complex plane and
the maximum real part is zero up to machine precision.
Furthermore, they indicate that the stiffness of the methods (as measured by
the largest eigenvalue by magnitude) increases when
reducing the number of elements and increasing the number of nodes per element
such that the total number of degrees of freedom (DOFs) is constant.

\begin{figure}[htbp]
  \centering
    \begin{subfigure}{0.49\textwidth}
    \centering
      \includegraphics[width=\textwidth]{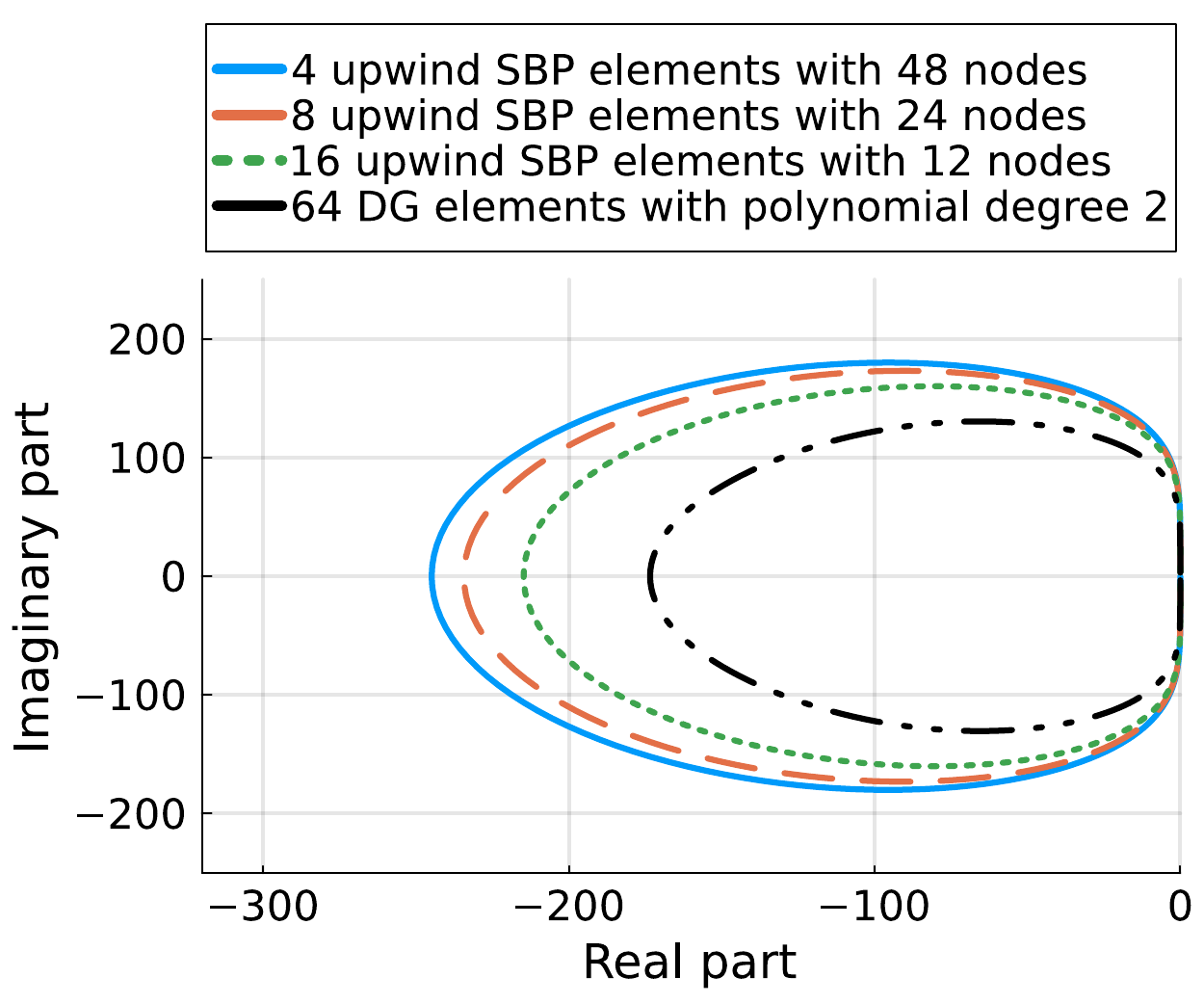}
      \caption{Upwind SBP methods with interior order of accuracy 4 and
               classical DGSEM with polynomial degree $p = 2$.}
    \end{subfigure}%
    \vspace{\fill}
    \begin{subfigure}{0.49\textwidth}
    \centering
      \includegraphics[width=\textwidth]{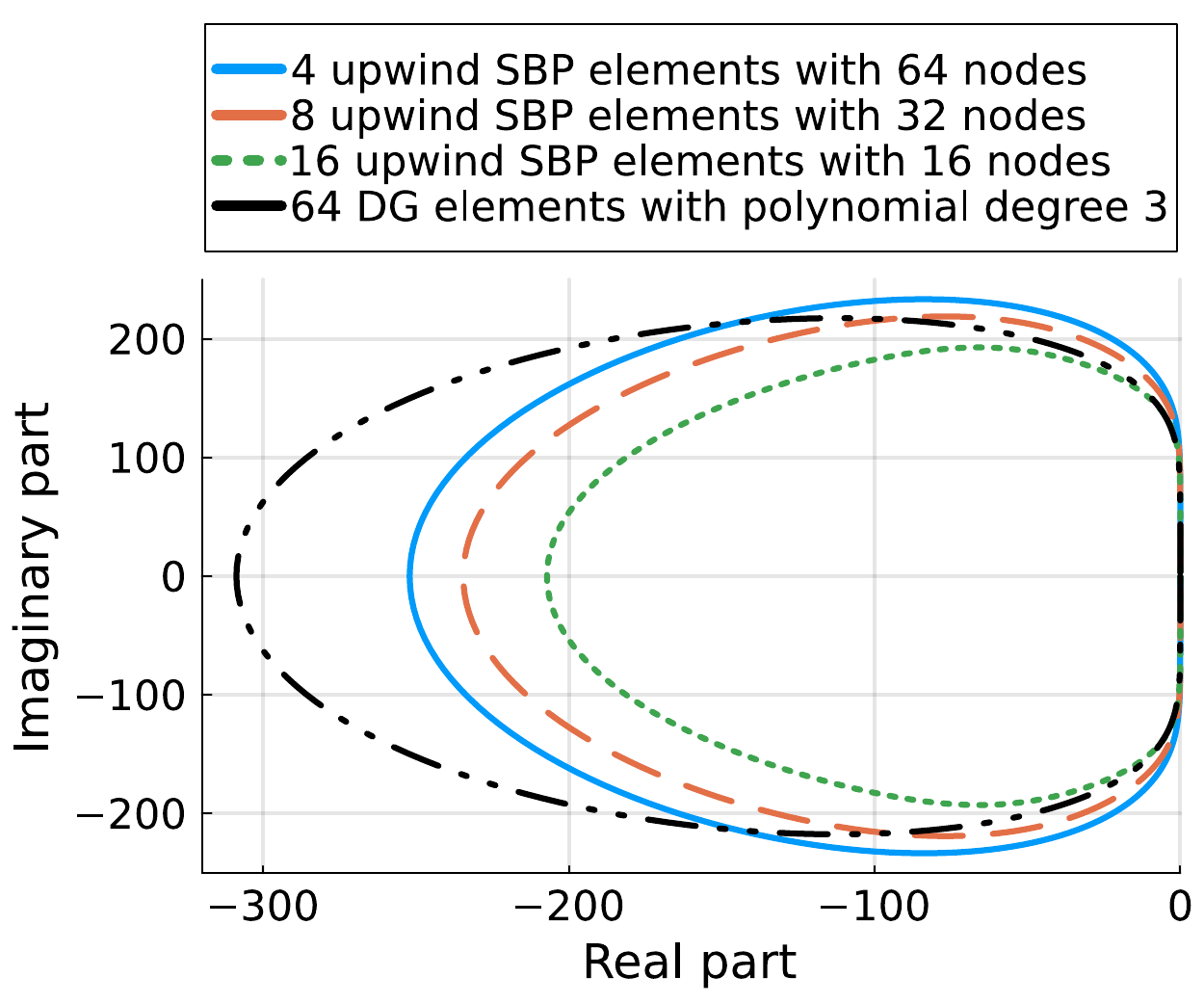}
      \caption{Upwind SBP methods with interior order of accuracy 6 and
               classical DGSEM with polynomial degree $p = 3$.}
    \end{subfigure}%
    \caption{Spectra of semidiscretizations of the 1D linear scalar advection
             equation with periodic boundary conditions. The maximum real part
             of all eigenvalues is around machine precision.}
    \label{fig:spectra}
  \end{figure}

In general, the spectra are comparable to the spectra obtained by the
classical nodal DGSEM method on Gauss-Lobatto-Legendre nodes. The spectra
shown in Figure~\ref{fig:spectra} suggest that the upwind SBP methods with
an interior order of accuracy 4 are stiffer than DGSEM with a polynomial
degree of 2; the situation is reversed for upwind SBP methods with an interior
of accuracy 6 and DGSEM with a polynomial degree of 3.

\subsection{Local linear/energy stability}

Next, we verify the local linear/energy stability properties discussed in
Section~\ref{sec:stability} numerically. For this, we discretize Burgers'
equation in the domain $(-1, 1)$ with periodic boundary conditions using
upwind SBP methods with a fully upwind discretization using only $D_-$.
To stress the methods, we consider a completely under-resolved case by
computing the Jacobian at a random non-negative state using automatic/algorithmic
differentiation via ForwardDiff.jl \cite{revels2016forward}.

\begin{table}[htbp]
\centering
  \caption{Maximal real part of the spectrum of upwind SBP discretizations
           of Burgers' equation with different interior order of accuracy $p$,
           $K$ elements, $N$ nodes per element, and a purely upwind discretization
           using only $D_-$.}
  \label{tab:spectrum_burgers}
  \begin{subtable}{0.32\textwidth}
  \centering
    \begin{tabular}{crrr}
      \toprule
      $p$ & $K$ & $N$ & \multicolumn{1}{c}{$\max \Re \sigma$} \\
      \midrule
      2 &  1 & 13 & \num{ 2.93e-16} \\
      2 &  1 & 14 & \num{-5.58e-16} \\
      2 &  2 & 13 & \num{ 2.62e-15} \\
      2 &  2 & 14 & \num{ 1.89e-15} \\
      3 &  1 & 13 & \num{ 3.08e-17} \\
      3 &  1 & 14 & \num{ 4.76e-16} \\
      3 &  2 & 13 & \num{ 5.51e-16} \\
      3 &  2 & 14 & \num{-2.78e-16} \\
      \bottomrule
    \end{tabular}
  \end{subtable}%
  \hspace{\fill}
  \begin{subtable}{0.32\textwidth}
  \centering
    \begin{tabular}{crrr}
      \toprule
      $p$ & $K$ & $N$ & \multicolumn{1}{c}{$\max \Re \sigma$} \\
      \midrule
      4 &  1 & 13 & \num{-4.67e-16} \\
      4 &  1 & 14 & \num{ 2.29e-16} \\
      4 &  2 & 13 & \num{-6.84e-17} \\
      4 &  2 & 14 & \num{ 3.39e-16} \\
      5 &  1 & 13 & \num{-1.67e-16} \\
      5 &  1 & 14 & \num{ 1.77e-17} \\
      5 &  2 & 13 & \num{ 2.11e-16} \\
      5 &  2 & 14 & \num{ 4.03e-16} \\
      \bottomrule
    \end{tabular}
  \end{subtable}%
  \hspace{\fill}
  \begin{subtable}{0.32\textwidth}
  \centering
    \begin{tabular}{crrr}
      \toprule
      $p$ & $K$ & $N$ & \multicolumn{1}{c}{$\max \Re \sigma$} \\
      \midrule
      6 &  1 & 13 & \num{-2.08e-16} \\
      6 &  1 & 14 & \num{-2.67e-16} \\
      6 &  2 & 13 & \num{-2.40e-16} \\
      6 &  2 & 14 & \num{-3.08e-16} \\
      7 &  1 & 13 & \num{ 1.65e-16} \\
      7 &  1 & 14 & \num{ 2.72e-16} \\
      7 &  2 & 13 & \num{ 4.33e-16} \\
      7 &  2 & 14 & \num{ 1.23e-16} \\
      \bottomrule
    \end{tabular}
  \end{subtable}%
\end{table}

The results are
shown in Table~\ref{tab:spectrum_burgers}. Clearly, the maximal positive real
part of the spectrum is around machine precision for 64 bit floating point
numbers in all cases.

\subsection{Free-stream preservation on unstructured meshes}
\label{sec:fsp_numerics}
In this section, we present numerical evidence for the proof of free-stream preservation
for the upwind SBP framework presented in Theorem~\ref{thm:curvi_fsp}. This theorem
found that more complicated splittings, like the Drikakis-Tsangaris, for the upwind method
in curvilinear coordinates are only guaranteed to be FSP
under a particular interplay between the boundary (or interface) polynomial degree of an
unstructured curvilinear mesh, the particular flux vector splitting,
and the boundary closure order of the upwind SBP operator.
The analysis in Section~\ref{sec:curvilinear} also demonstrated that FSP is easily obtained
provided the splitting technique remains linear as a function of the metrics terms, as was
the case for the local Lax-Friedrichs splitting.

We reiterate that on a Cartesian box mesh, where all metric terms are constants proportional
to fixed values of $\Delta x$ or $\Delta y$, there is no issue with FSP.
It is only when we move the approximation into generalized curvilinear coordinates that one
must take care of the mapping, the metric terms, and their approximation strategy.
The importance, and subtleties, of the discrete approximation of the metric terms
has been known for decades across different computational fluid dynamics
communities, e.g., \cite{vinokur2002extension,kopriva2006metric,visbal1999high}.

Setting up an FSP test is straightforward and, at a glance, fairly innocuous.
A constant solution should remain constant (up to machine precision) for all time as indicated
by the governing equations \eqref{eq:HCL-2D} with appropriate boundary
conditions. For the test herein we consider the compressible Euler
equations in two space dimensions \eqref{eq:2d_comp_euler}.
Given the free-stream solution state of the conservative variables
\begin{equation}
\label{eq:fsp_sol}
u_\infty
=
\begin{pmatrix}
\rho_\infty\\
(\rho v_1)_\infty\\
(\rho v_2)_\infty\\
(\rho e)_\infty
\end{pmatrix}
=
\begin{pmatrix}
1.0\\
0.1\\
-0.2\\
10.0
\end{pmatrix},
\end{equation}
the fluxes are all constant and their divergence vanishes on the continuous level.
However, in the discrete setting this is (potentially) not always the case.

For the tests in this section we consider a domain with a circular outer boundary
and an interior boundary composed of two straight lines and a semicircle.
This domain is then divided into 204 non-overlapping quadrilateral elements.
We create two versions of the mesh presented in Figure~\ref{fig:fsp_meshes}:
one composed only of bi-linear elements and the other with bi-linear elements in
the interior and boundaries approximated with quadratic polynomials.

\begin{figure}[htbp]
\centering
  \begin{subfigure}{0.44\textwidth}
  \centering
    \includegraphics[width=\textwidth]{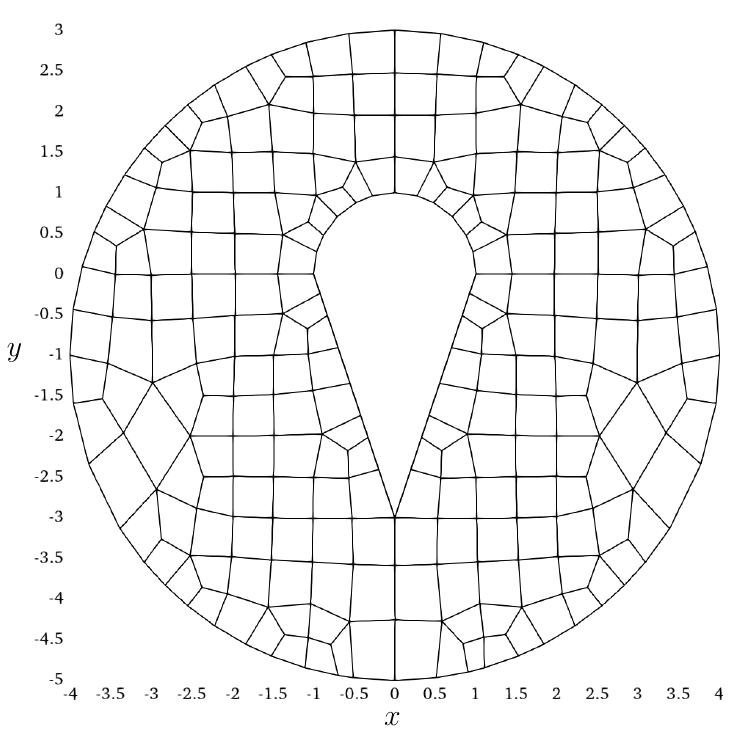}
    \caption{Mesh with linear boundary polynomials.}
  \end{subfigure}%
  \hspace*{\fill}
  \begin{subfigure}{0.44\textwidth}
  \centering
    \includegraphics[width=\textwidth]{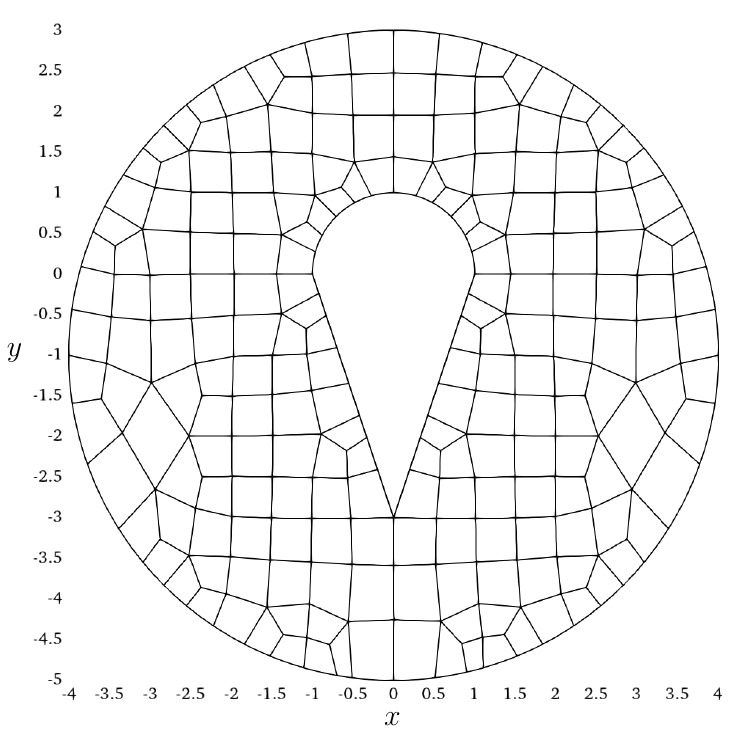}
    \caption{Mesh with quadratic boundary polynomials.}
  \end{subfigure}%
  \caption{Non-overlapping quadrilateral meshes used for the free-stream preservation testing.}
  \label{fig:fsp_meshes}
\end{figure}

Whether or not a particular upwind SBP operator is FSP depends upon the splitting,
the boundary closure accuracy, and the polynomial degree of the curvilinear
boundary approximations in the mesh.
We use the two meshes shown in Figure~\ref{fig:fsp_meshes} to examine the theoretical
finding of Theorem~\ref{thm:curvi_fsp} for different combinations of the upwind SBP operator,
the flux vector splitting in generalized coordinates, and the polynomial degree of the mesh.
For the FSP testing we fix the spatial resolution to be $17$ nodes in each spatial direction
and integrate the constant solution initial condition \eqref{eq:fsp_sol} up to a final time of 10.
We present results of the FSP test in Tables~\ref{tab:fsp_llf}--\ref{tab:fsp_vLH} where we vary
the curvilinear splitting and consider the upwind SBP operators provided by
Mattsson~\cite{mattsson2017diagonal} from interior order 2 up to interior order 9.

\begin{table}[htbp]
\centering
  \caption{Free-stream preservation error at final time $10$ for the local Lax-Friedrichs splitting
  on two mesh types with $17$ nodes in each spatial direction. As expected from the results in
  Section~\ref{sec:curvilinear}, the local Lax-Friedrichs splitting is FSP across all configurations.}
  \label{tab:fsp_llf}
   \begin{adjustbox}{max width=\textwidth}
    \begin{tabular}{ccccccccc}
      \toprule
      interior order & 2 & 3 & 4 & 5 & 6 & 7 & 8 & 9 \\
      \midrule
      bi-linear mesh & \num{7.55e-14} & \num{1.16e-13} & \num{6.28e-14} & \num{6.99e-15} & \num{1.52e-13} & \num{1.16e-13} & \num{4.35e-14} & \num{3.89e-14} \\
      quadratic mesh & \num{2.38e-14} & \num{4.72e-14} & \num{8.41e-14} & \num{5.48e-14} & \num{1.22e-13} & \num{1.47e-14} & \num{3.83e-14} & \num{4.18e-14} \\
      \bottomrule
    \end{tabular}
    \end{adjustbox}
\end{table}

\begin{table}[htbp]
\centering
  \caption{Free-stream preservation error at final time $10$ for the Drikakis-Tsangaris splitting
  on two mesh types with $17$ nodes in each spatial direction. As shown from the result of
  Theorem~\ref{thm:curvi_fsp}, this splitting is FSP provided the boundary closure is accurate
  enough to exactly differentiate polynomials of degree $2\Ngeo$.}
  \label{tab:fsp_drikakis}
    \begin{adjustbox}{max width=\textwidth}
    \begin{tabular}{ccccccccc}
      \toprule
      interior order & 2 & 3 & 4 & 5 & 6 & 7 & 8 & 9 \\
      \midrule
      bi-linear mesh & \num{2.04e-6} & \num{9.13e-7} & \num{3.34e-14} & \num{1.77e-14}& \num{3.34e-14} & \num{1.24e-14} & \num{8.20e-14} & \num{2.05e-14}\\
      quadratic mesh & \num{2.08e-6} & \num{9.32e-7} & \num{5.74e-9} & \num{2.75e-9} & \num{7.09e-11} & \num{3.33e-11} & \num{2.18e-14} & \num{1.55e-14} \\
      \bottomrule
    \end{tabular}
    \end{adjustbox}
\end{table}

\begin{table}[htbp]
\centering
  \caption{Free-stream preservation error at final time $10$ for the van Leer-Hänel splitting
  on two mesh types with $17$ nodes in each spatial direction. As shown from the result of
  Theorem~\ref{thm:curvi_fsp}, this splitting is FSP provided the boundary closure is accurate
  enough to exactly differentiate polynomials of degree $2\Ngeo$.}
  \label{tab:fsp_vLH}
   \begin{adjustbox}{max width=\textwidth}
    \begin{tabular}{ccccccccc}
      \toprule
      interior order & 2 & 3 & 4 & 5 & 6 & 7 & 8 & 9 \\
      \midrule
      bi-linear mesh & \num{3.32e-6} & \num{1.46e-6} & \num{4.78e-14} & \num{5.26e-15} & \num{3.36e-14} & \num{1.02e-14} & \num{2.66e-14} & \num{1.95e-14} \\
      quadratic mesh & \num{3.40e-6} & \num{1.49e-6} & \num{8.78e-9} & \num{4.20e-9} & \num{1.06e-10} & \num{4.98e-11} & \num{1.62e-14} & \num{5.43e-14}\\
      \bottomrule
    \end{tabular}
    \end{adjustbox}
\end{table}

As anticipated from the discussion in Section~\ref{sec:curvilinear} the local Lax-Friedrichs splitting
maintains FSP for both meshes and all operator orders. The results also support the conclusion
of Theorem~\ref{thm:curvi_fsp} for the more complicated Drikakis-Tsangaris and van Leer-Hänel
splittings. Both of these splittings have a maximum quadratic dependence on the metric terms.
We see that the operators with interior accuracy 2 and 3 (and boundary closure accuracy 1)
are not FSP for either the Drikakis-Tsangaris nor van Lerr-Hänel splittings due to the lack of accuracy in the boundary closures.
The numerical results show that FSP is maintained on the bi-linear test mesh for all operator orders
above interior order 4 whereas FSP is only maintained on the quadratic test mesh for the 8-4 and 9-4 operators.
All upwind SBP operator with interior order less than eight lack the required boundary closure accuracy
to guarantee FSP on a quadratic mesh.

\subsection{Isentropic vortex}

We consider the classical isentropic vortex test case of
\cite{shu1997essentially} with initial conditions
\begin{equation}
\label{eq:isen_vort}
\begin{gathered}
  T = T_0 - \frac{(\gamma-1) \epsilon^2}{8 \gamma \pi^2} \exp\bigl(1-r^2\bigr),
  \quad
  \rho = \rho_0 (T / T_0)^{1 / (\gamma - 1)},
  \\
  v = v_0 + \frac{\varepsilon}{2 \pi} \exp\bigl((1-r^2) / 2\bigr) (-x_2, x_1)^T,
\end{gathered}
\end{equation}
where $\epsilon = 10$ is the vortex strength,
$r$ is the distance from the origin,
$T = p / \rho$ the temperature, $\rho_0 = 1$ the background density,
$v_0 = (1, 1)^T$ the background velocity,
$p_0 = 10$ the background pressure, $\gamma = 1.4$, and
$T_0 = p_0 / \rho_0$ the background temperature.
The domain $[-5, 5]^2$ is equipped with periodic boundary conditions.

\begin{figure}[htbp]
\centering
  \begin{subfigure}{0.49\textwidth}
  \centering
    \includegraphics[width=\textwidth]{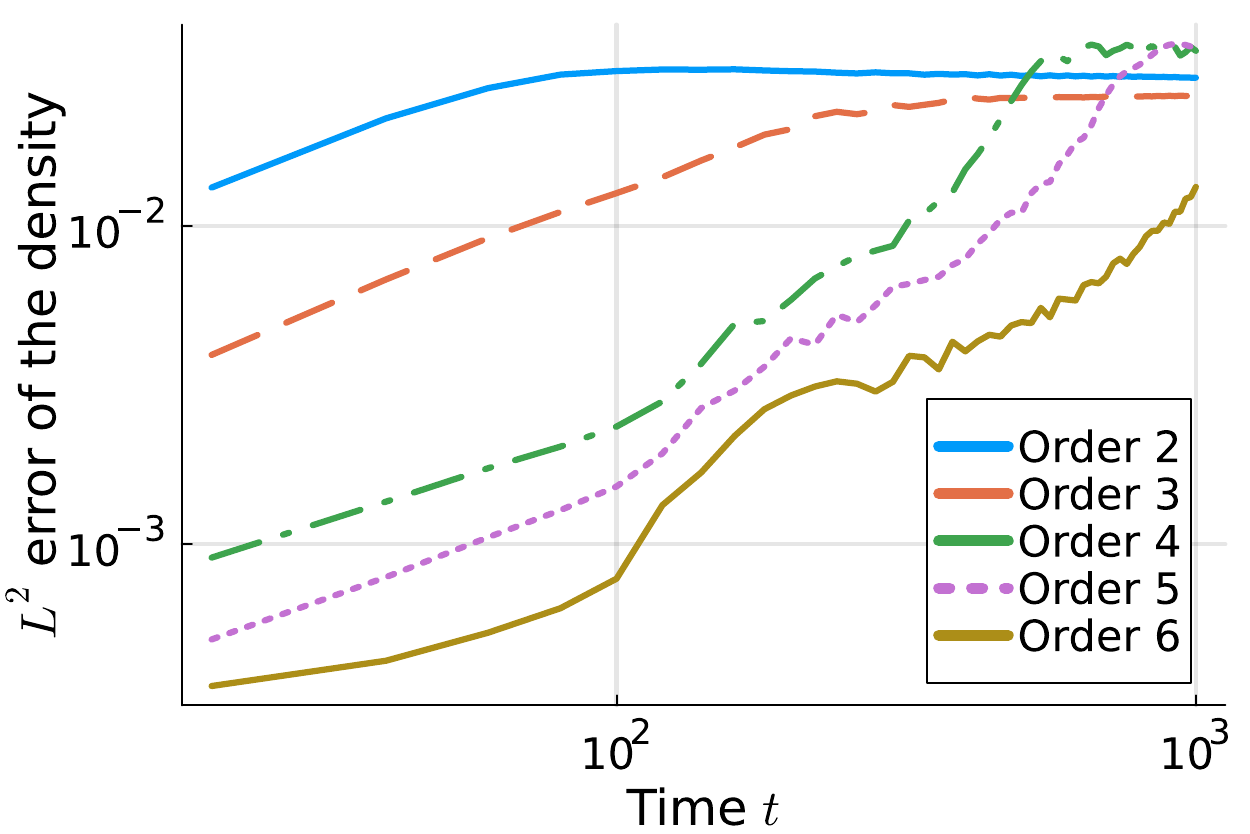}
    \caption{$4^2$ blocks with $16^2$ nodes each using the operators of
             \cite{mattsson2017diagonal} with different interior order of
             accuracy.}
  \end{subfigure}%
  \hspace*{\fill}
  \begin{subfigure}{0.49\textwidth}
  \centering
    \includegraphics[width=\textwidth]{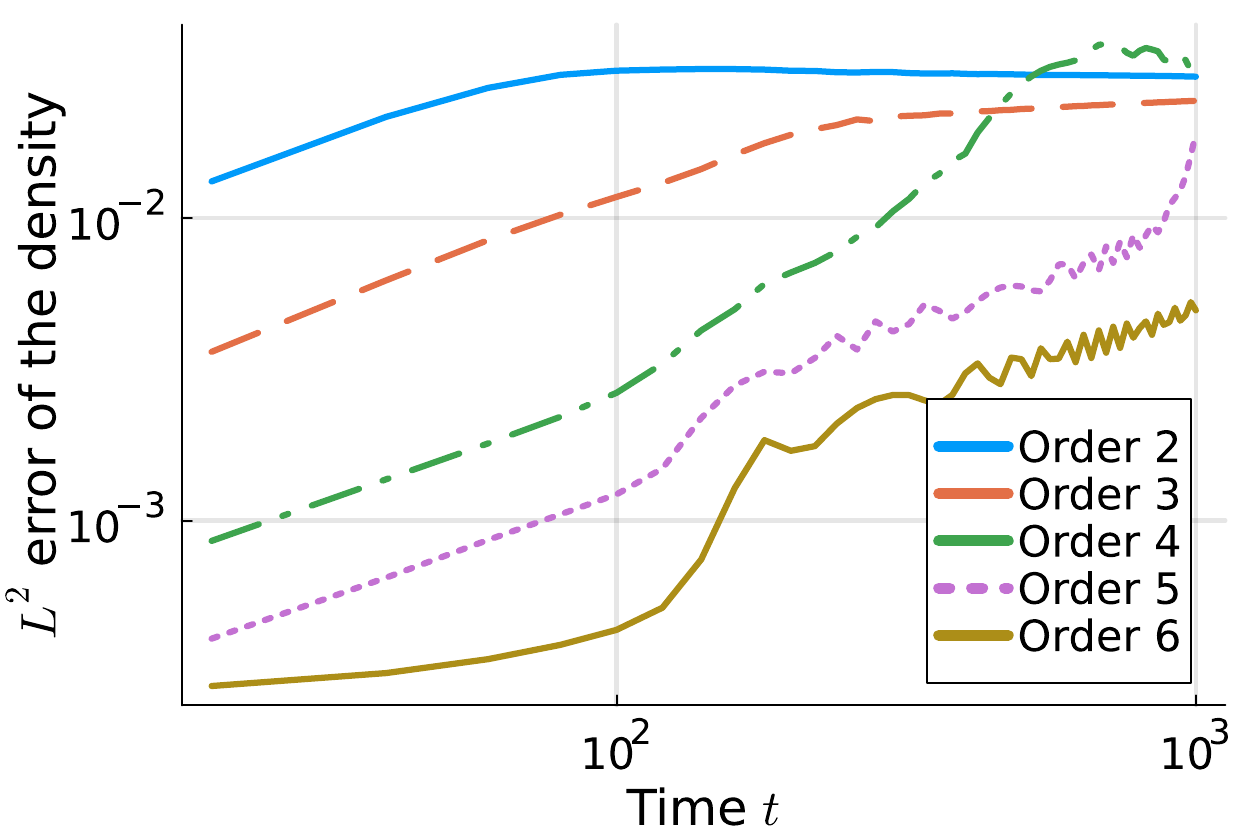}
    \caption{A single block with $64^2$ nodes and fully periodic operators using
             only the interior stencils of \cite{mattsson2017diagonal}.}
  \end{subfigure}%
  \caption{Discrete $L^2$ of the density for long-time simulations of the
           isentropic vortex for the 2D compressible Euler equations.}
  \label{fig:isentropic_vortex_error}
\end{figure}

Following \cite{sjogreen2018high}, we use this setup to demonstrate the
long-time stability of the methods. We use the same time integration
method and approach to compute the discrete $L^2$ error of the density
as in Section~\ref{sec:convergence_euler}.
As shown in Figure~\ref{fig:isentropic_vortex_error}, the upwind methods
remain stable and are able to run the simulations successfully for long times.

To demonstrate the robustness of the upwind methods on curvilinear meshes
we, again, consider the isentropic vortex \eqref{eq:isen_vort} with
$\varepsilon=5$ and $p_0 = 25$. We take
$\Omega = [-10,10]^2$ and subdivide the domain with eight elements in each spatial direction
for a total of 64 elements. The Cartesian domain of $\Omega = [-10,10]^2$ is then heavily
warped with a strategy adapted from \cite{hennemann2021provably,chan2019efficient}
where
\begin{equation}
\label{eq:warping}
\begin{aligned}
y &= \eta + \frac{L_y}{8}\cos\!\left(\frac{3\pi}{2}\left(\frac{2\xi - L_x}{L_x}\right)\right) \cos\!\left(\frac{\pi}{2}\left(\frac{2\eta - L_y}{L_y}\right)\right),\\[0.1cm]
x &= \xi + \frac{L_x}{8}\cos\!\left(\frac{\pi}{2}\left(\frac{2\xi - L_x}{L_x}\right)\right) \cos\!\left(2\pi\left(\frac{2y - L_y}{L_y}\right)\right),
\end{aligned}
\end{equation}
with $L_x = L_y = 10$.
All domain boundaries remain periodic under this mapping.
The resulting mesh, given as the overlay of curvilinear quadrilaterals in Figure~\ref{fig:warped_vortex},
is extremely distorted with many element that approach degeneracy. That is, several elements in the mesh
are close to having an internal angle near 180 degrees that renders the transfinite interpolation procedure
to create the element mapping ill-conditioned. This extreme warping to a ``poor'' quality mesh was
purposely done to demonstrate that the upwind methods remain robust for the isentropic vortex test case
even in this extreme problem setup.

\begin{figure}[htbp]
\centering
  \begin{subfigure}{0.325\textwidth}
  \centering
    \includegraphics[width=\textwidth]{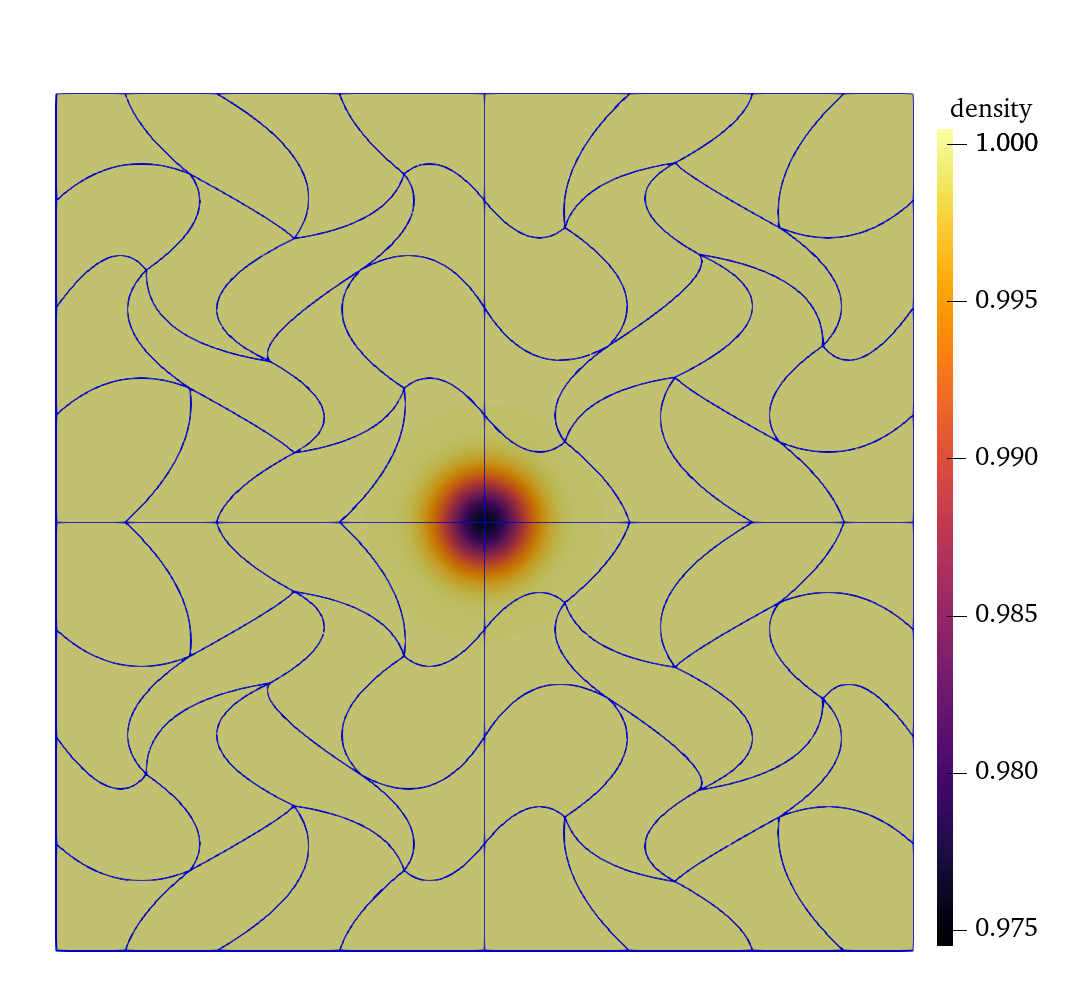}
    \caption{$t=0$}
  \end{subfigure}%
  \hspace{\fill}
  \begin{subfigure}{0.325\textwidth}
  \centering
    \includegraphics[width=\textwidth]{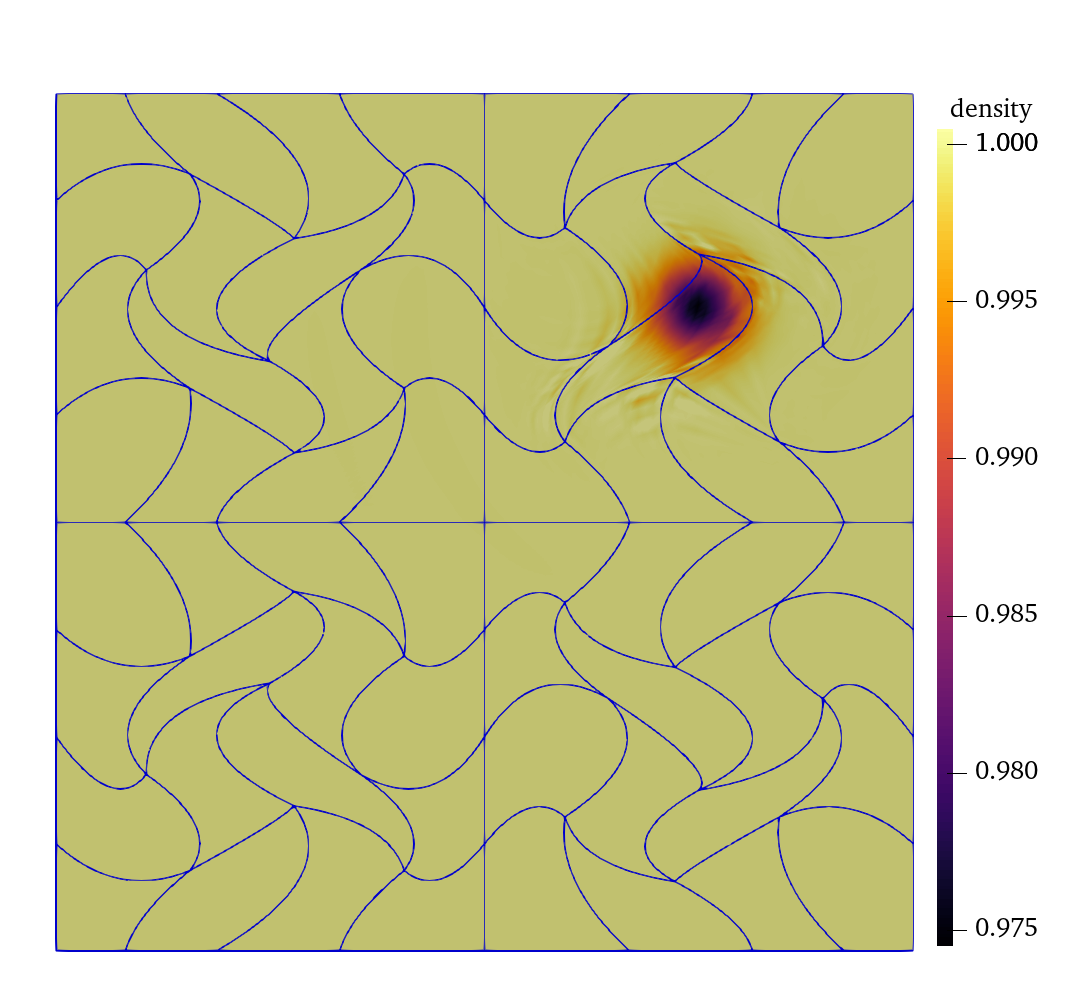}
    \caption{$t=5$}
  \end{subfigure}%
  \hspace{\fill}
  \begin{subfigure}{0.325\textwidth}
  \centering
    \includegraphics[width=\textwidth]{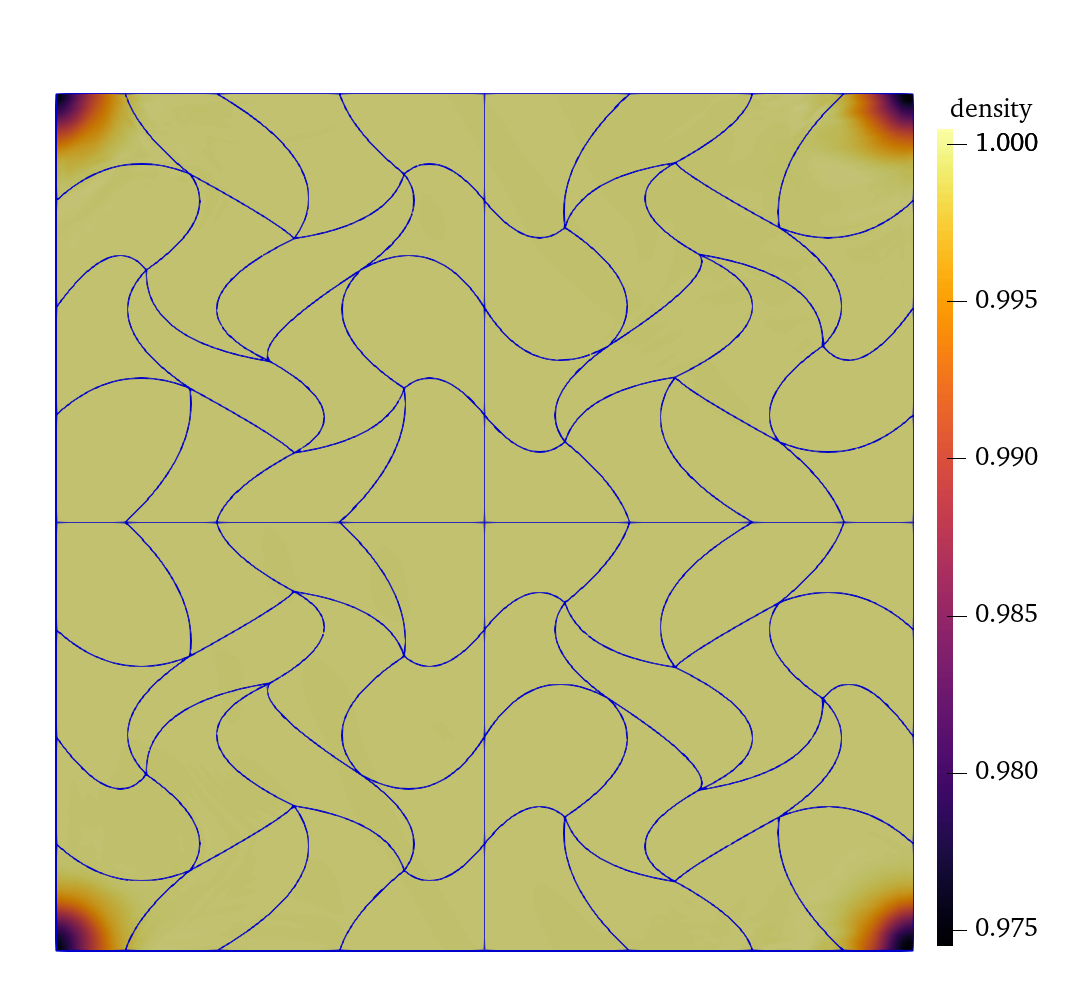}
    \caption{$t=10$}
  \end{subfigure}%
  \\
  \begin{subfigure}{0.325\textwidth}
  \centering
    \includegraphics[width=\textwidth]{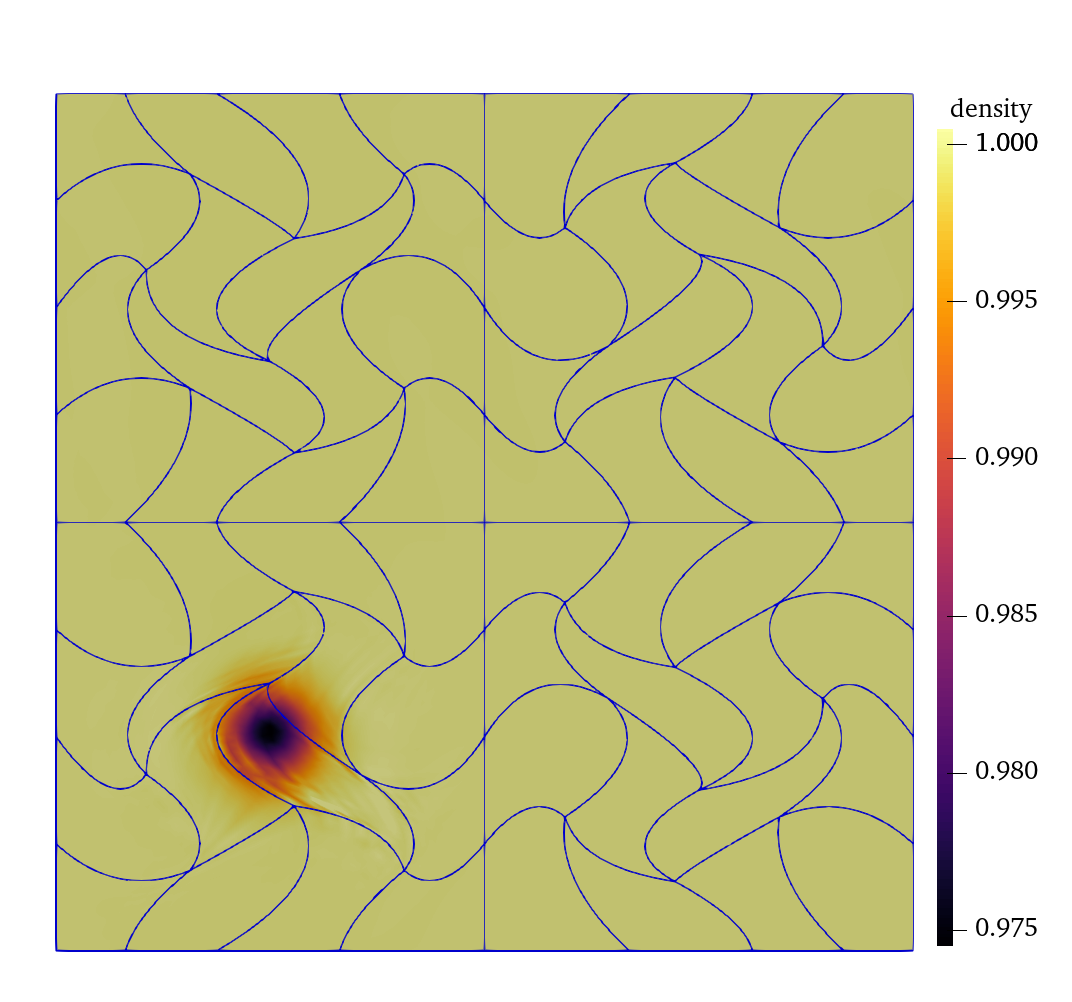}
    \caption{$t=15$}
  \end{subfigure}%
   \hspace{0.15cm}
  \begin{subfigure}{0.325\textwidth}
  \centering
    \includegraphics[width=\textwidth]{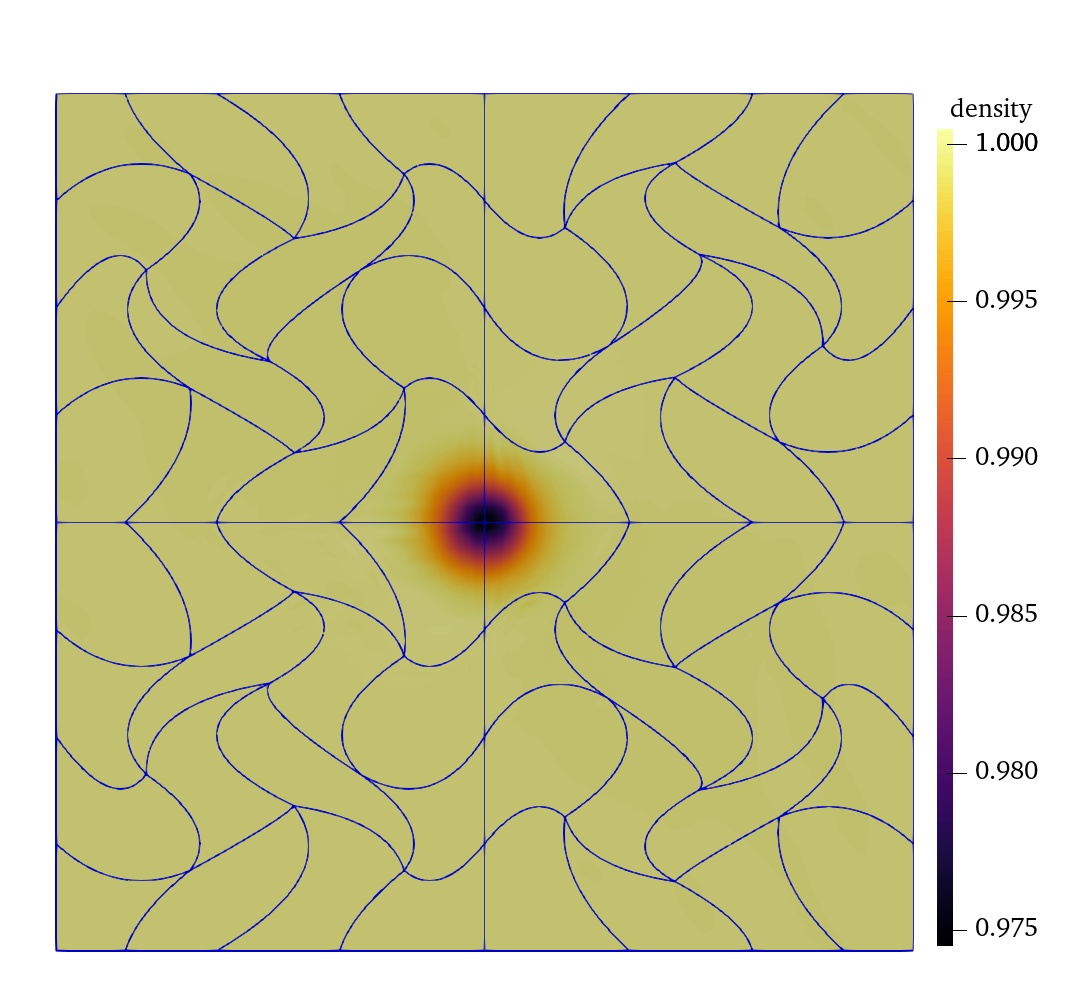}
    \caption{$t=20$}
  \end{subfigure}%
  \caption{Isentropic vortex evolution up to final time $t=20$ on a heavily distorted quadrilateral mesh
  of 64 elements. All curvilinear interior boundaries of the warping \eqref{eq:warping} are approximated with quadratic
  polynomials. This result used the curvilinear van Leer-Hänel splitting, Example \ref{ex:van-Leer-Hanel_curved}, with 17 nodes in each spatial direction
  and the 8th order interior, 4th order boundary accurate upwind SBP operators.}
  \label{fig:warped_vortex}
\end{figure}

We approximate the internal curved boundaries with quadratic polynomials. On each element we take
17 nodes in each spatial direction and use the upwind SBP operator that is 8th order in the interior
with 4th order boundary closures. We present in Figure~\ref{fig:warped_vortex} the results at different
times between $t=0$ and $t=20$ using the curvilinear van Leer-Hänel splitting from Example~\ref{ex:van-Leer-Hanel_curved}.
Although there are some grid artifacts
as the vortex passes through extremely distorted elements, the method maintains the shape of the vortex well.
In Figure~\ref{fig:warped_vortex_long_time} we show the $L^2$ density error for a long time simulation
for the curvilinear local Lax-Friedrichs, Drikakis-Tsangaris, and van Leer-Hänel splittings.
All three splittings of this heavily distorted curvilinear mesh remain stable. Because the
test case configuration is well-resolved, any differences in density errors between the three
splitting techniques are unnoticeable in the eyeball norm.
\begin{figure}[htbp]
  \centering
    \includegraphics[width=0.5\textwidth]{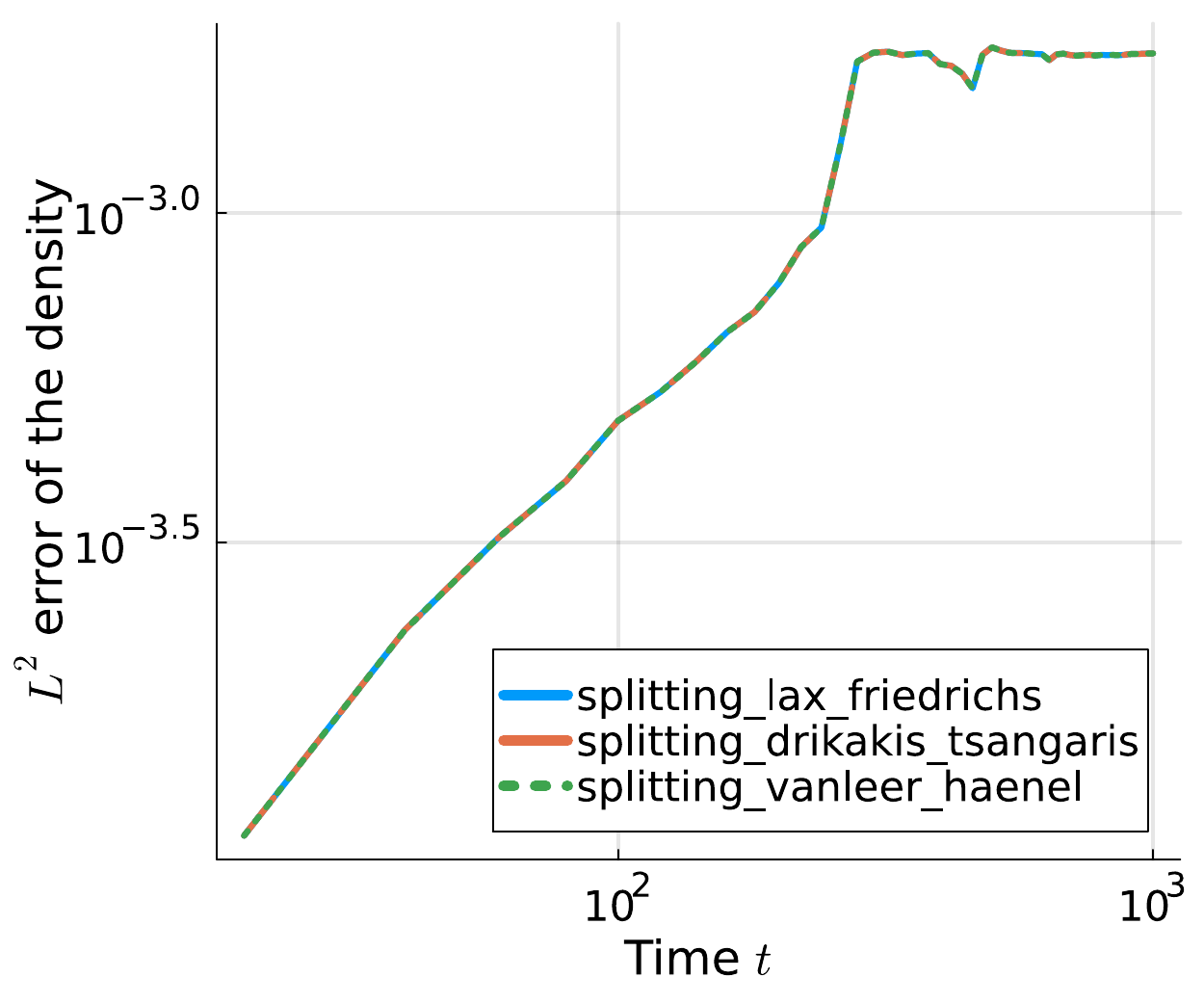}
    \caption{Discrete $L^2$ of the density for long-time simulation of three curvilinear splittings
    applied to the isentropic vortex for the 2D compressible Euler equations. Each run used the
    same heavily distorted quadrilateral mesh.}
  \label{fig:warped_vortex_long_time}
\end{figure}

\subsection{Kelvin-Helmholtz instability}
\label{sec:khi}

Next, we
move beyond well-resolved test configurations and use a Kelvin-Helmholtz instability
setup for the 2D compressible Euler equations of an ideal fluid to further test the robustness of the methods and their different splittings
in various under-resolved regimes in detail.
Specifically, we use the same setup as in \cite{chan2022entropy}, i.e., the
initial condition
\begin{equation}
  \rho = \frac{1}{2} + \frac{3}{4} B(x,y),
  \quad
  p = 1,
  \quad
  v_1 = \frac{1}{2} \bigl( B(x,y) - 1 \bigr),
  \quad
  v_2 = \frac{1}{10}\sin(2 \pi x),
\end{equation}
where $B(x,y)$ is the smoothed approximation
\begin{equation}
  B(x, y) = \tanh(15 y + 7.5) - \tanh(15 y - 7.5)
\end{equation}
to a discontinuous step function. The domain is $[-1,1]^2$ with time interval
$[0, 15]$. We integrate the semidiscretizations in time with the third-order,
four-stage SSP method of \cite{kraaijevanger1991contractivity} with embedded
method of \cite{conde2022embedded} and error-based step size controller
developed in \cite{ranocha2021optimized} with absolute and relative tolerances
chosen as $10^{-6}$.

\begin{table}[htbp]
\centering
  \caption{Final times of numerical simulations of the Kelvin-Helmholtz
           instability with $K$ elements using 16 nodes per coordinate
           direction for the upwind SBP methods. Final times less than 15
           indicate that the simulation crashed.}
  \label{tab:kelvin_helmholtz}
  \begin{subtable}{0.49\textwidth}
  \centering
    \caption{Upwind SBP, van Leer-Hänel splitting \cite{vanleer1982flux,hanel1987accuracy,liou1991high}.}
    \begin{tabular}{r rrrrrr}
      \toprule
      \multicolumn{1}{c}{$K$} & \multicolumn{6}{c}{interior order of accuracy} \\
          & \multicolumn{1}{c}{2}
          & \multicolumn{1}{c}{3}
          & \multicolumn{1}{c}{4}
          & \multicolumn{1}{c}{5}
          & \multicolumn{1}{c}{6}
          & \multicolumn{1}{c}{7} \\
      \midrule
        1 & 15.0 & 15.0 & 15.0 & 15.0 & 15.0 & 15.0 \\
        4 & 15.0 & 15.0 & 15.0 & 15.0 & 15.0 & 15.0 \\
      16 & 15.0 & 15.0 & 15.0 & 15.0 & 4.72 & 3.97 \\
      64 & 15.0 & 15.0 & 4.53 & 3.86 & 4.17 & 3.36 \\
      256 & 15.0 & 15.0 & 5.80 & 3.70 & 3.66 & 3.68 \\
      \bottomrule
    \end{tabular}
  \end{subtable}%
  \hspace{\fill}
  \begin{subtable}{0.49\textwidth}
  \centering
    \caption{Upwind SBP, Steger-Warming splitting \cite{steger1979flux}.}
    \begin{tabular}{r rrrrrr}
      \toprule
      \multicolumn{1}{c}{$K$} & \multicolumn{6}{c}{interior order of accuracy} \\
          & \multicolumn{1}{c}{2}
          & \multicolumn{1}{c}{3}
          & \multicolumn{1}{c}{4}
          & \multicolumn{1}{c}{5}
          & \multicolumn{1}{c}{6}
          & \multicolumn{1}{c}{7} \\
      \midrule
        1 & 15.0 & 15.0 & 15.0 & 15.0 & 15.0 & 15.0 \\
        4 & 15.0 & 15.0 & 15.0 & 15.0 & 15.0 & 15.0 \\
      16 & 15.0 & 15.0 & 15.0 & 15.0 & 4.87 & 3.88 \\
      64 & 15.0 & 15.0 & 4.55 & 3.85 & 4.13 & 4.07 \\
      256 & 15.0 & 15.0 & 5.80 & 3.69 & 3.66 & 3.67 \\
      \bottomrule
    \end{tabular}
  \end{subtable}%
  \\
  \begin{subtable}{0.49\textwidth}
  \centering
    \caption{Flux differencing DGSEM, flux of Ranocha
             \cite{ranocha2018comparison,ranocha2020entropy,ranocha2021preventing}.}
    \begin{tabular}{r rrrrrr}
      \toprule
      \multicolumn{1}{c}{$K$} & \multicolumn{6}{c}{polynomial degree} \\
          & \multicolumn{1}{c}{2}
          & \multicolumn{1}{c}{3}
          & \multicolumn{1}{c}{4}
          & \multicolumn{1}{c}{5}
          & \multicolumn{1}{c}{6}
          & \multicolumn{1}{c}{7} \\
      \midrule
      16 & 15.0 & 4.46 & 2.47 & 3.01 & 2.80 & 3.59 \\
      64 & 4.68 & 1.53 & 4.04 & 3.70 & 4.10 & 3.56 \\
      256 & 4.81 & 3.77 & 4.44 & 3.74 & 3.37 & 3.64 \\
      1024 & 4.12 & 3.66 & 4.27 & 3.54 & 3.66 & 3.56 \\
      \bottomrule
    \end{tabular}
  \end{subtable}%
  \hspace{\fill}
  \begin{subtable}{0.49\textwidth}
  \centering
    \caption{Flux differencing DGSEM, flux of Shima et al.
             \cite{shima2021preventing}.}
    \begin{tabular}{r rrrrrr}
      \toprule
      \multicolumn{1}{c}{$K$} & \multicolumn{6}{c}{polynomial degree} \\
          & \multicolumn{1}{c}{2}
          & \multicolumn{1}{c}{3}
          & \multicolumn{1}{c}{4}
          & \multicolumn{1}{c}{5}
          & \multicolumn{1}{c}{6}
          & \multicolumn{1}{c}{7} \\
      \midrule
      16 & 15.0 & 2.73 & 1.81 & 2.42 & 1.86 & 2.27 \\
      64 & 2.92 & 1.38 & 3.05 & 3.07 & 1.82 & 2.02 \\
      256 & 3.25 & 2.82 & 3.29 & 2.82 & 2.84 & 2.96 \\
      1024 & 3.03 & 2.88 & 3.36 & 2.91 & 3.08 & 3.25 \\
      \bottomrule
    \end{tabular}
  \end{subtable}%
\end{table}

We use two types of semidiscretizations: i) the upwind SBP methods described
in this article and ii) flux differencing DGSEM with different volume fluxes
and a local Lax-Friedrichs (Rusanov) surface flux. We refer to
\cite{ranocha2023efficient} for a description of this DGSEM variant and its
efficient implementation and to \cite{lukacova2023convergence} for convergence results.

The final times of the simulations are summarized in
Table~\ref{tab:kelvin_helmholtz}.
First, it is interesting to observe that all upwind SBP methods with few
numbers of elements $K \in \{1, 4\}$ completed the simulation successfully.
The same is true for low interior orders of accuracy $\in \{2, 3\}$.
However, setups with more elements and higher orders of accuracy became
unstable and crashed before $t = 5$.

The same general trend can be observed for flux differencing DGSEM, where nearly
all setups became unstable and crashed. It is particularly interesting that
the simulations with only a few elements remained stable, even if their total
number of DOFs is comparable to DGSEM setups. For example,
the upwind SBP methods with $K = 4$ elements and 16 nodes per coordinate
direction have $K \cdot 16^2 = 1024$ DOFs, the same amount
as the DGSEM with $K = 64$ elements and a polynomial degree $p = 3$.

\begin{table}[htbp]
\centering
  \caption{Final times of numerical simulations of the Kelvin-Helmholtz
           instability with $K$ elements using $256 / \sqrt{K}$ nodes per
           coordinate direction so that the total number of DOFs
           stays fixed at $\num{65536}$. Final times less than 15 indicate that the
           simulation crashed.}
  \label{tab:kelvin_helmholtz_constant_dofs}
  \begin{subtable}{0.49\textwidth}
  \centering
    \caption{Upwind SBP, van Leer-Hänel splitting \cite{vanleer1982flux,hanel1987accuracy,liou1991high}.}
    \begin{tabular}{r rrrrrr}
      \toprule
      \multicolumn{1}{c}{$K$} & \multicolumn{6}{c}{interior order of accuracy} \\
          & \multicolumn{1}{c}{2}
          & \multicolumn{1}{c}{3}
          & \multicolumn{1}{c}{4}
          & \multicolumn{1}{c}{5}
          & \multicolumn{1}{c}{6}
          & \multicolumn{1}{c}{7} \\
      \midrule
        1 & 15.0 & 15.0 & 15.0 & 15.0 & 5.83 & 4.73 \\
        4 & 15.0 & 15.0 & 6.35 & 15.0 & 5.83 & 4.72 \\
      16 & 15.0 & 15.0 & 15.0 & 15.0 & 5.18 & 4.00 \\
      64 & 15.0 & 15.0 & 15.0 & 5.80 & 4.37 & 3.99 \\
      256 & 15.0 & 15.0 & 5.80 & 3.70 & 3.66 & 3.68 \\
      \bottomrule
    \end{tabular}
  \end{subtable}%
  \hspace{\fill}
  \begin{subtable}{0.49\textwidth}
  \centering
    \caption{Upwind SBP, Steger-Warming splitting \cite{steger1979flux}.}
    \begin{tabular}{r rrrrrr}
      \toprule
      \multicolumn{1}{c}{$K$} & \multicolumn{6}{c}{interior order of accuracy} \\
          & \multicolumn{1}{c}{2}
          & \multicolumn{1}{c}{3}
          & \multicolumn{1}{c}{4}
          & \multicolumn{1}{c}{5}
          & \multicolumn{1}{c}{6}
          & \multicolumn{1}{c}{7} \\
      \midrule
        1 & 15.0 & 15.0 & 15.0 & 15.0 & 5.86 & 4.80 \\
        4 & 15.0 & 15.0 & 15.0 & 15.0 & 15.0 & 4.79 \\
      16 & 15.0 & 15.0 & 15.0 & 15.0 & 5.43 & 4.03 \\
      64 & 15.0 & 15.0 & 15.0 & 5.68 & 4.36 & 4.02 \\
      256 & 15.0 & 15.0 & 5.80 & 3.69 & 3.66 & 3.67 \\
      \bottomrule
    \end{tabular}
  \end{subtable}%
\end{table}

To further investigate this behavior, we ran additional simulations with upwind
SBP operators. Here, we choose the number of nodes such that the total number
of DOFs remains constant. The resulting final simulation times
are shown in Table~\ref{tab:kelvin_helmholtz_constant_dofs}. As in the case
of a constant number of nodes per element investigated before, increasing the
number of elements makes the upwind FD methods less robust. The only exceptions are again
the low-order methods with an interior order of accuracy two and three
(both resulting in an experimental order of convergence of two under mesh
refinement by increasing the number of elements).

\begin{figure}[htbp]
\centering
  \begin{subfigure}{0.43\textwidth}
  \centering
    \includegraphics[width=\textwidth]{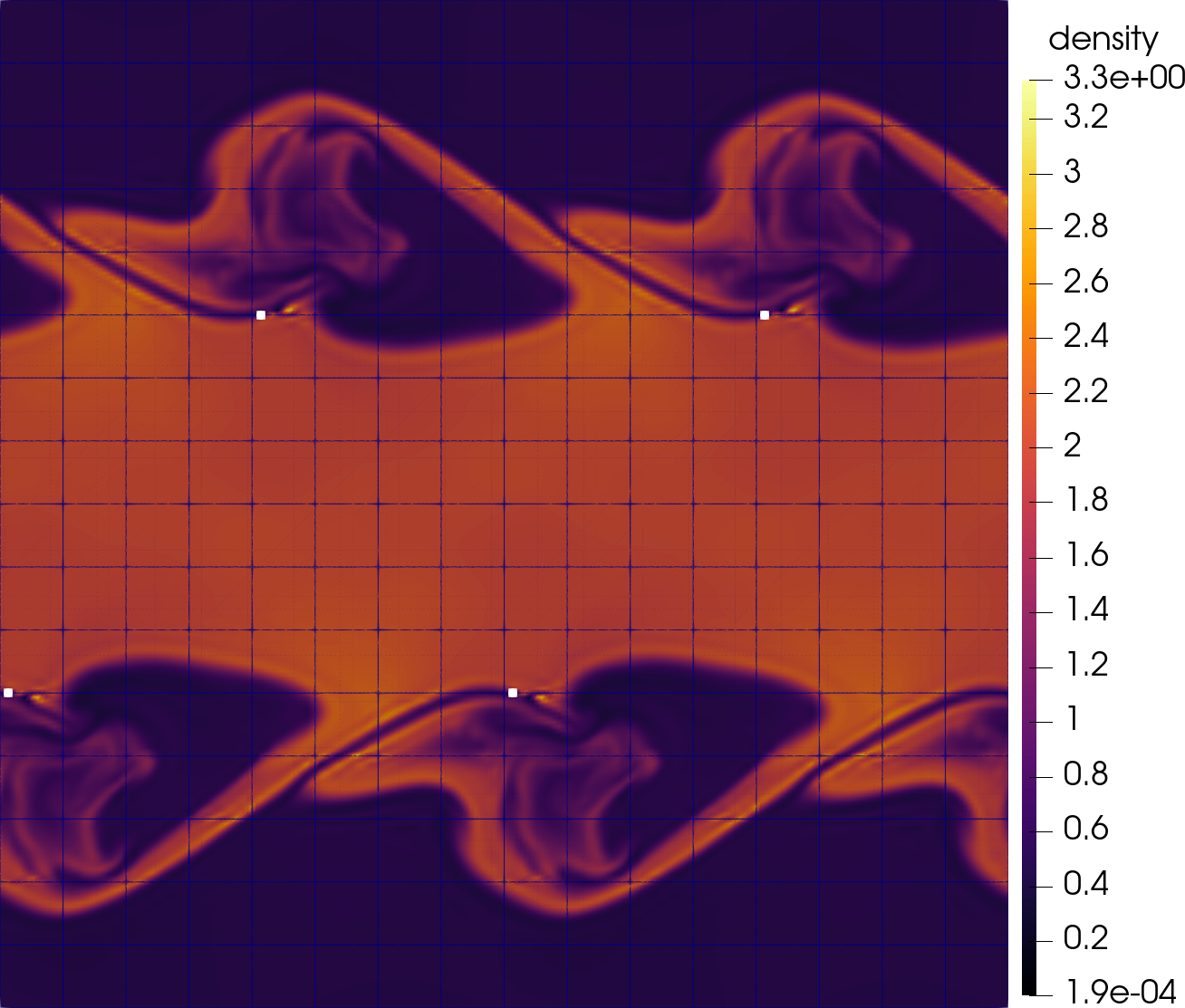}
    \caption{$16^2$ elements with $16^2$ nodes, crash at $t = 3.66$.}
  \end{subfigure}%
  \hspace{\fill}
  \begin{subfigure}{0.43\textwidth}
  \centering
    \includegraphics[width=\textwidth]{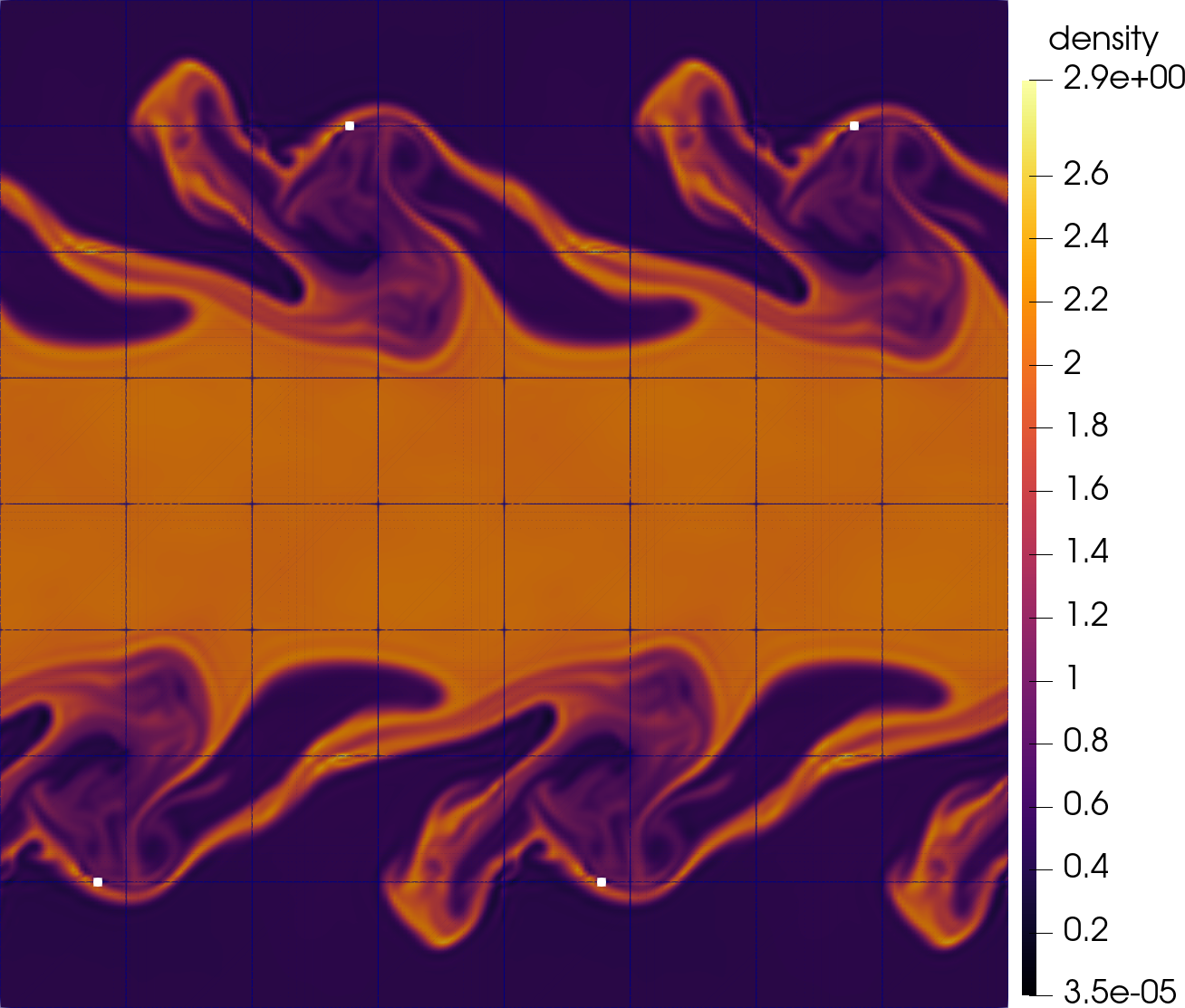}
    \caption{$8^2$ elements with $32^2$ nodes, crash at $t = 4.37$.}
  \end{subfigure}%
  \\
  \begin{subfigure}{0.43\textwidth}
  \centering
    \includegraphics[width=\textwidth]{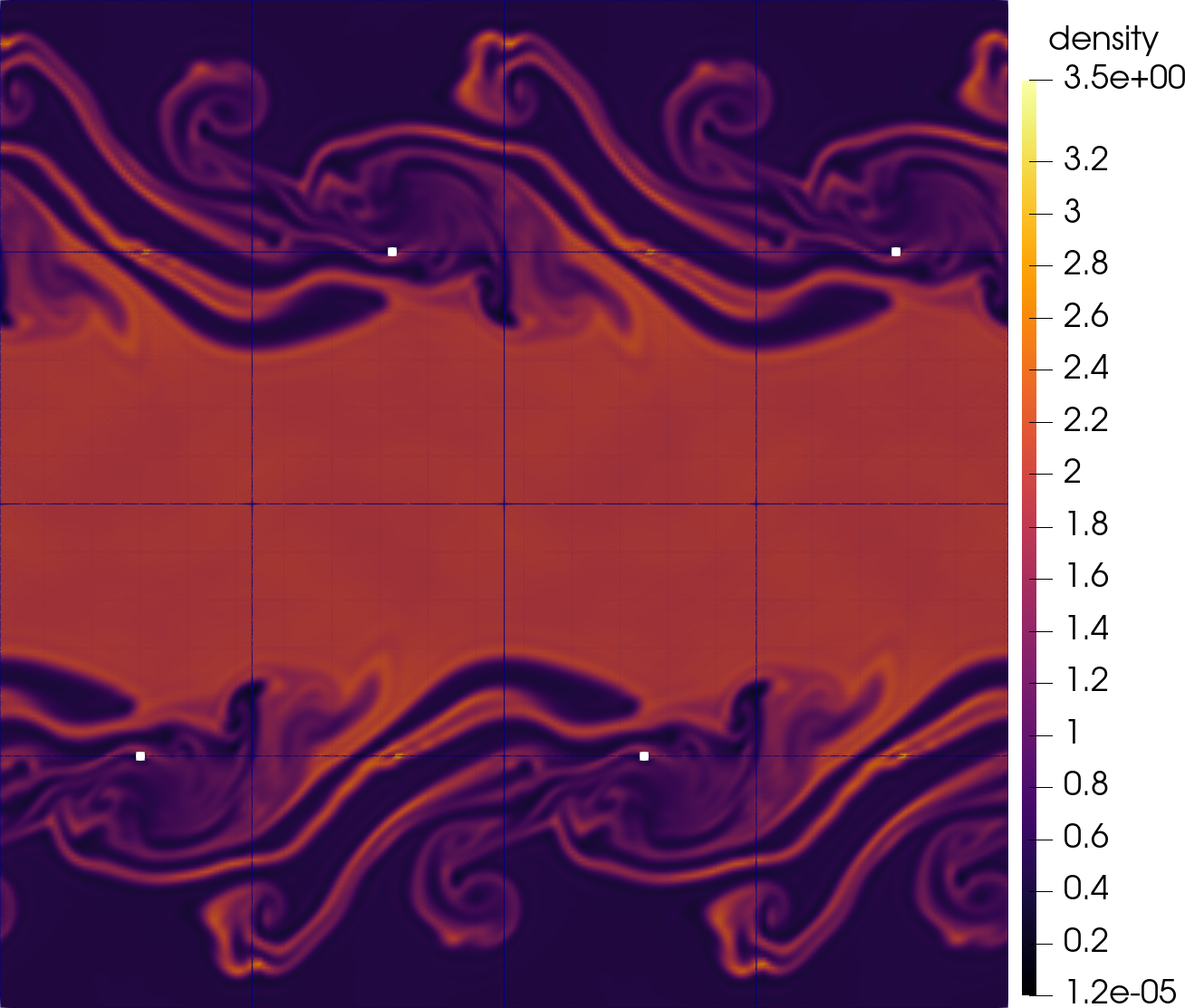}
    \caption{$4^2$ elements with $64^2$ nodes, crash at $t = 5.18$.}
  \end{subfigure}%
  \hspace{\fill}
  \begin{subfigure}{0.43\textwidth}
  \centering
    \includegraphics[width=\textwidth]{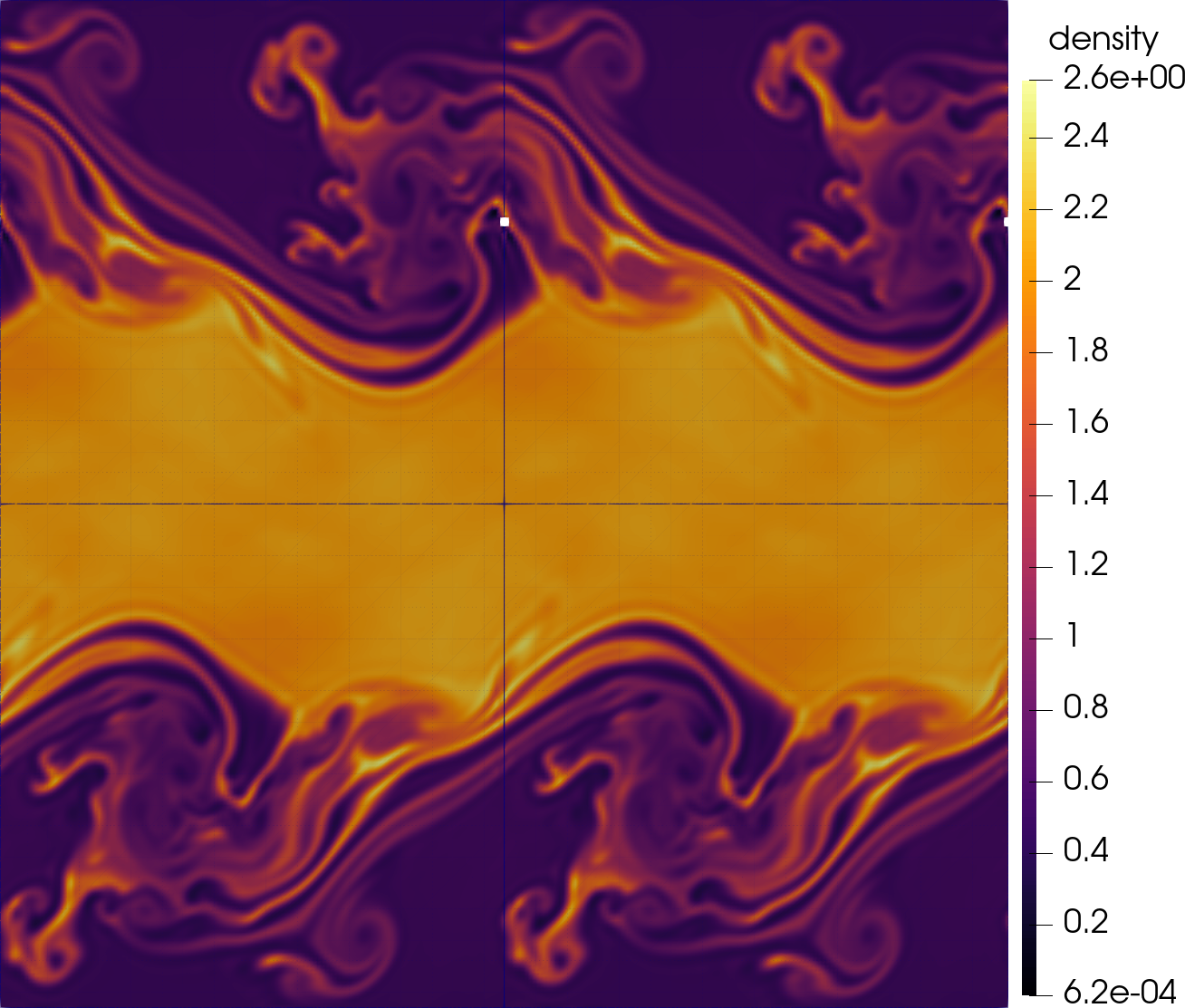}
    \caption{$2^2$ elements with $128^2$ nodes, crash at $t = 5.83$.}
  \end{subfigure}%
  \\
  \begin{subfigure}{0.43\textwidth}
  \centering
    \includegraphics[width=\textwidth]{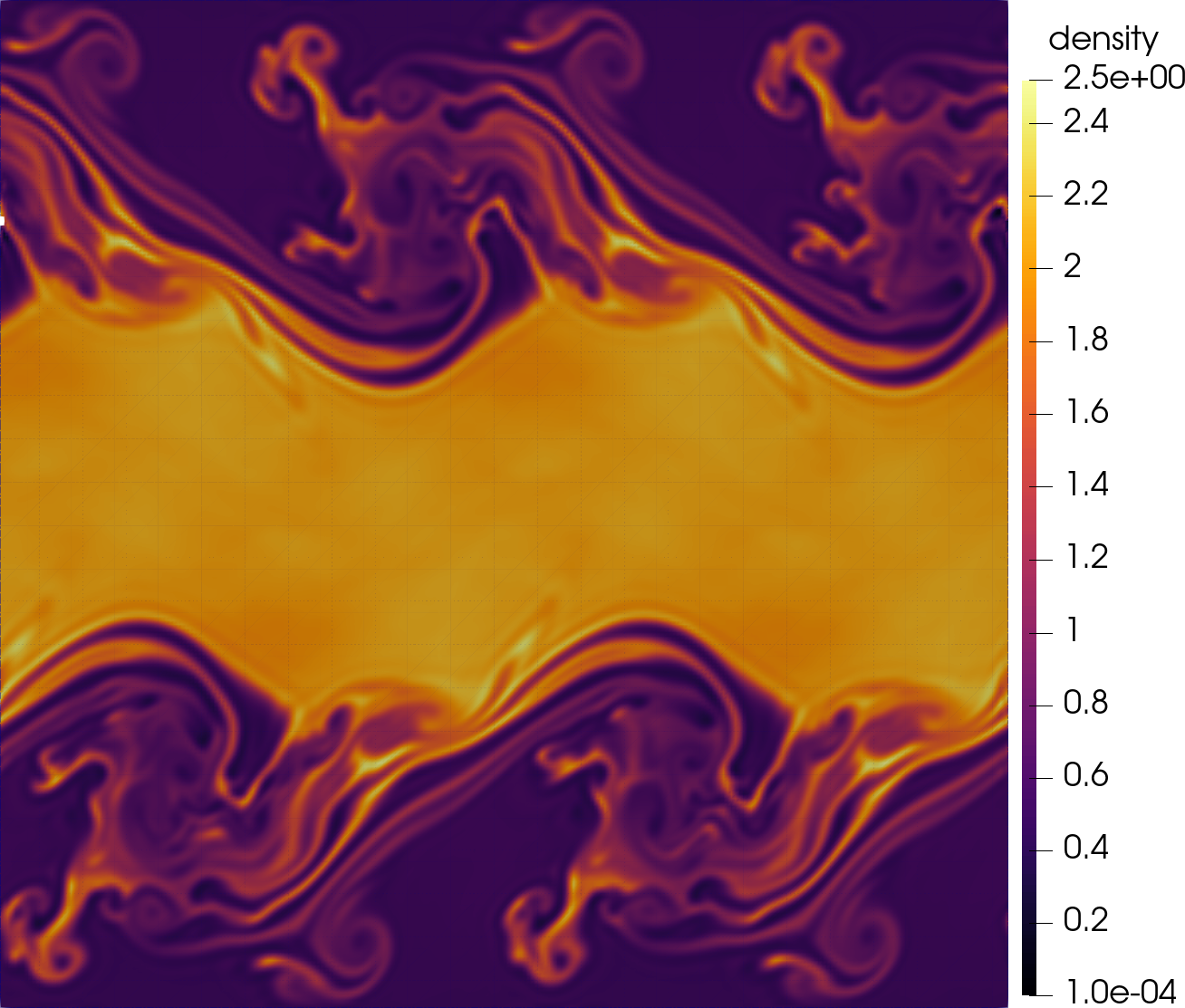}
    \caption{$1$ element with $256^2$ nodes, crash at $t = 5.83$.}
  \end{subfigure}%
  \hspace{\fill}
  \begin{subfigure}{0.43\textwidth}
  \centering
    \includegraphics[width=\textwidth]{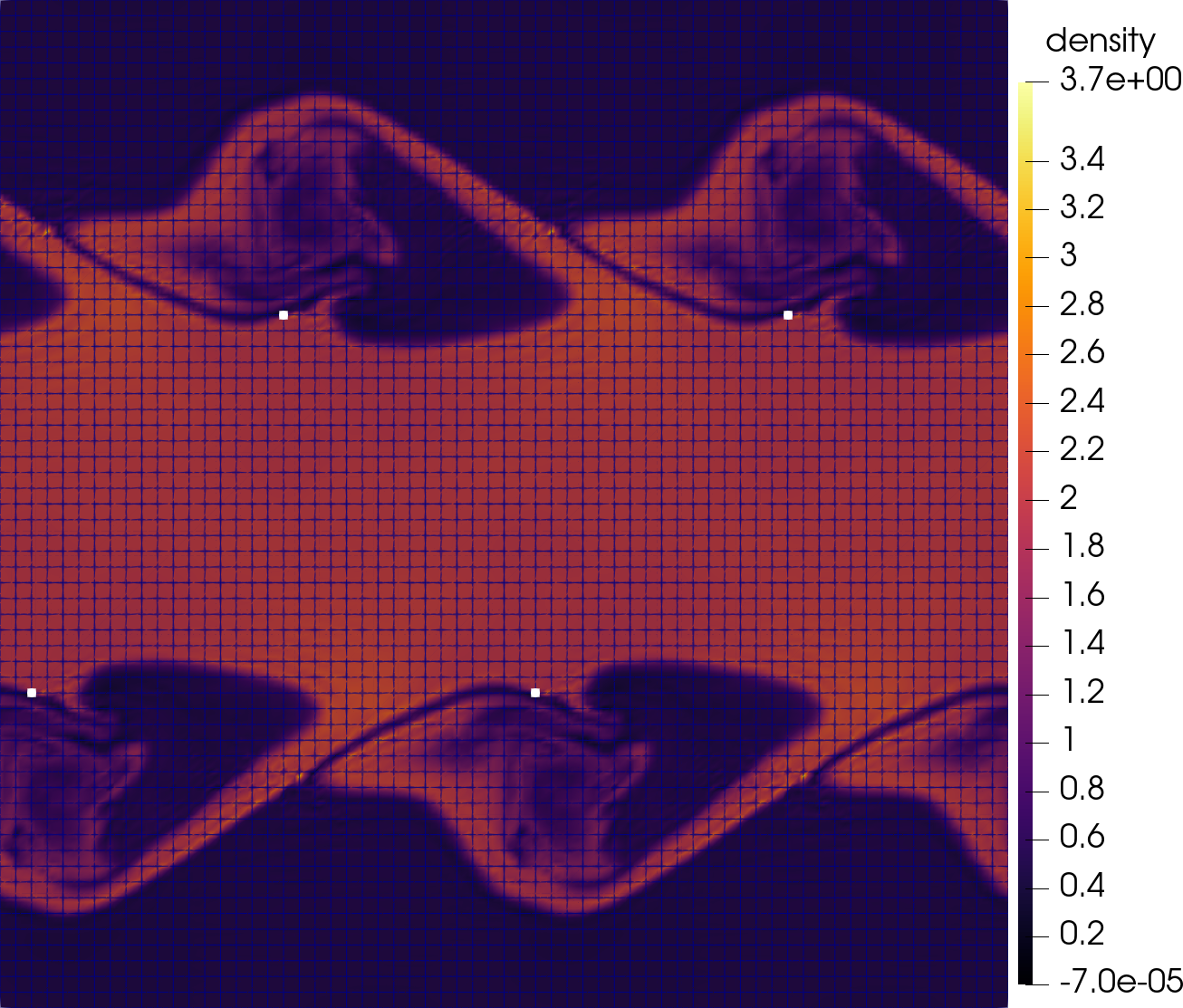}
    \caption{DGSEM, $64^2$ elements with $p = 3$, crash at $t = 3.66$.}
  \end{subfigure}%
  \caption{Visualization of numerical solutions when the simulations
           of the Kelvin-Helmholtz instability crashed. All simulations
           use the same number of DOFs --- with varying numbers of elements/nodes
           for the upwind SBP methods with an interior order of accuracy 6
           using the van Leer-Hänel splitting
           \cite{vanleer1982flux,hanel1987accuracy,liou1991high}. For
           comparison, results obtained by entropy-stable flux differencing
           DGSEM are also shown.
           The white spots mark points where the pressure (upwind SBP) or the
           density (DGSEM) is negative.}
  \label{fig:kelvin_helmholtz_visualization}
\end{figure}

Figure~\ref{fig:kelvin_helmholtz_visualization} shows the numerical solutions
corresponding to the constant DOF setup of
Table~\ref{tab:kelvin_helmholtz_constant_dofs} at the time the simulations
crashed. As usual, we plot the density of the numerical solutions to allow
a comparison with other publications.
The white spots mark the points where the pressure is negative
(for the upwind SBP methods) or where the density is negative (for the DGSEM).
Clearly, the problematic nodes are always located at interfaces between
elements. This appears to be correlated with the earlier crash times of
simulations with more elements and fewer nodes per element. Note that the
periodic boundary conditions are enforced weakly. Thus, even the setup with
a single element has internal interfaces --- and the negative pressure occurs
exactly at one of these boundary points (top left quadrant).

\begin{table}[htbp]
\centering
  \caption{Final times of numerical simulations of the Kelvin-Helmholtz
           instability with a single element using purely periodic upwind
           methods, i.e., only the interior stencils of the corresponding
           upwind SBP operators. Final times less than 15 indicate that the
           simulation crashed.}
  \label{tab:kelvin_helmholtz_periodic_upwind}
  \begin{subtable}{0.49\textwidth}
  \centering
    \caption{Van Leer-Hänel splitting \cite{vanleer1982flux,hanel1987accuracy,liou1991high}.}
    \begin{tabular}{r rrrrrr}
      \toprule
      \multicolumn{1}{c}{\#nodes} & \multicolumn{6}{c}{interior order of accuracy} \\
          & \multicolumn{1}{c}{2}
          & \multicolumn{1}{c}{3}
          & \multicolumn{1}{c}{4}
          & \multicolumn{1}{c}{5}
          & \multicolumn{1}{c}{6}
          & \multicolumn{1}{c}{7} \\
      \midrule
          \num{256} & 15.0 & 15.0 & 15.0 & 15.0 & 15.0 & 15.0 \\
         \num{1024} & 15.0 & 15.0 & 15.0 & 15.0 & 15.0 & 15.0 \\
         \num{4096} & 15.0 & 15.0 & 15.0 & 15.0 & 15.0 & 15.0 \\
        \num{16384} & 15.0 & 15.0 & 15.0 & 15.0 & 15.0 & 15.0 \\
        \num{65536} & 15.0 & 15.0 & 15.0 & 15.0 & 15.0 & 4.77 \\
      \bottomrule
    \end{tabular}
  \end{subtable}%
  \hspace{\fill}
  \begin{subtable}{0.49\textwidth}
  \centering
    \caption{Steger-Warming splitting \cite{steger1979flux}.}
    \begin{tabular}{r rrrrrr}
      \toprule
      \multicolumn{1}{c}{\#nodes} & \multicolumn{6}{c}{interior order of accuracy} \\
          & \multicolumn{1}{c}{2}
          & \multicolumn{1}{c}{3}
          & \multicolumn{1}{c}{4}
          & \multicolumn{1}{c}{5}
          & \multicolumn{1}{c}{6}
          & \multicolumn{1}{c}{7} \\
      \midrule
          \num{256} & 15.0 & 15.0 & 15.0 & 15.0 & 15.0 & 15.0 \\
         \num{1024} & 15.0 & 15.0 & 15.0 & 15.0 & 15.0 & 15.0 \\
         \num{4096} & 15.0 & 15.0 & 15.0 & 15.0 & 15.0 & 15.0 \\
        \num{16384} & 15.0 & 15.0 & 15.0 & 15.0 & 15.0 & 15.0 \\
        \num{65536} & 15.0 & 15.0 & 15.0 & 15.0 & 15.0 & 15.0 \\
      \bottomrule
    \end{tabular}
  \end{subtable}%
\end{table}

To investigate this claim, we considered purely periodic upwind methods using only the interior
coefficients of the upwind SBP operators. As shown in
Table~\ref{tab:kelvin_helmholtz_periodic_upwind}, this version is much
more robust. Indeed, all of the simulations ran successfully except the
van Leer-Hänel splitting with an interior order of accuracy 7 and
$256^2 = \num{65536}$ nodes in total.

\subsection{Inviscid Taylor-Green vortex}
\label{sec:tgv}

Next, we consider the classical inviscid Taylor-Green vortex for the 3D
compressible Euler equations of an ideal gas following
\cite{gassner2016split}. Specifically, we consider the initial conditions
\begin{equation}
\begin{gathered}
%
%
  \rho = 1, \;
  v_1 =  \sin(x_1) \cos(x_2) \cos(x_3), \;
  v_2 = -\cos(x_1) \sin(x_2) \cos(x_3), \;
  v_3  = 0, \\
  p = \frac{\rho^0}{\mathrm{Ma}^2 \gamma} + \rho^0 \frac{\cos(2 x_1) \cos(2 x_3) + 2 \cos(2 x_2) + 2 \cos(2 x_1) + \cos(2 x_2) \cos(2 x_3)}{16},
\end{gathered}
\end{equation}
where $\mathrm{Ma} = 0.1$ is the Mach number.
We consider the domain $[-\pi, \pi]^3$ with periodic boundary conditions
and a time interval $[0, 20]$.
We integrate the semidiscretizations in time with the third-order,
four-stage SSP method of \cite{kraaijevanger1991contractivity} with embedded
method of \cite{conde2022embedded} and error-based step size controller
developed in \cite{ranocha2021optimized} with absolute and relative tolerance
chosen as $10^{-6}$.

\begin{table}[htbp]
\centering
  \caption{Final times of numerical simulations of the inviscid Taylor-Green
           vortex with $\mathrm{Ma} = 0.1$, $K$ elements, and upwind SBP
           methods using the Steger-Warming splitting \cite{steger1979flux}.
           Final times less than 20 indicate that the simulation crashed.}
  \label{tab:taylor_green}
  \begin{subtable}{0.49\textwidth}
  \centering
    \caption{Constant number of nodes per direction (= 16).}
    \begin{tabular}{r rrrrrr}
      \toprule
      \multicolumn{1}{c}{$K$} & \multicolumn{6}{c}{interior order of accuracy} \\
          & \multicolumn{1}{c}{2}
          & \multicolumn{1}{c}{3}
          & \multicolumn{1}{c}{4}
          & \multicolumn{1}{c}{5}
          & \multicolumn{1}{c}{6}
          & \multicolumn{1}{c}{7} \\
      \midrule
        1 & 20.0 & 20.0 & 20.0 & 20.0 & 20.0 & 20.0 \\
        8 & 20.0 & 20.0 & 20.0 & 20.0 & 20.0 & 13.5 \\ 
       64 & 20.0 & 20.0 & 20.0 & 20.0 & 20.0 & 6.12 \\
      \bottomrule
    \end{tabular}
  \end{subtable}%
  \hspace{\fill}
  \begin{subtable}{0.49\textwidth}
  \centering
    \caption{Constant number of DOFs (= \num{262144}).}
    \begin{tabular}{r rrrrrr}
      \toprule
      \multicolumn{1}{c}{$K$} & \multicolumn{6}{c}{interior order of accuracy} \\
          & \multicolumn{1}{c}{2}
          & \multicolumn{1}{c}{3}
          & \multicolumn{1}{c}{4}
          & \multicolumn{1}{c}{5}
          & \multicolumn{1}{c}{6}
          & \multicolumn{1}{c}{7} \\
      \midrule
        1 & 20.0 & 20.0 & 20.0 & 20.0 & 20.0 & 6.1 \\ 
        4 & 20.0 & 20.0 & 20.0 & 20.0 & 20.0 & 15.0 \\ 
       16 & 20.0 & 20.0 & 20.0 & 20.0 & 20.0 & 6.12 \\
      \bottomrule
    \end{tabular}
  \end{subtable}%
\end{table}

The final times of simulations using upwind SBP operators are shown in
Table~\ref{tab:taylor_green}. For 16 nodes per coordinate direction, only
the method with an interior order of accuracy 7 crashed --- all lower-order
methods completed the full simulation.

\begin{table}[htbp]
\centering
  \caption{Final times of numerical simulations of the inviscid Taylor-Green
           vortex with $\mathrm{Ma} = 0.4$, $K$ elements, and upwind SBP
           methods using the Steger-Warming splitting \cite{steger1979flux}
           with a constant number of nodes per direction (= 16).
           Final times less than 20 indicate that the simulation crashed.}
  \label{tab:taylor_green_mach04}
  \begin{subtable}{0.49\textwidth}
  \centering
    \begin{tabular}{r rrrrrr}
      \toprule
      \multicolumn{1}{c}{$K$} & \multicolumn{6}{c}{interior order of accuracy} \\
          & \multicolumn{1}{c}{2}
          & \multicolumn{1}{c}{3}
          & \multicolumn{1}{c}{4}
          & \multicolumn{1}{c}{5}
          & \multicolumn{1}{c}{6}
          & \multicolumn{1}{c}{7} \\
      \midrule
        1 & 20.0 & 20.0 & 20.0 & 20.0 & 5.79 & 5.31 \\
        8 & 20.0 & 20.0 & 20.0 & 4.79 & 4.00 & 3.89 \\
      64 & 20.0 & 20.0 & 8.40 & 5.70 & 4.42 & 4.18 \\
      \bottomrule
    \end{tabular}
  \end{subtable}%
\end{table}

Next, we repeat the numerical robustness experiments with an increased Mach
number $\mathrm{Ma} = 0.4$. The results are shown in
Table~\ref{tab:taylor_green_mach04}. The increased Mach number introduces
more compressibility effects, testing the robustness of the numerical
methods in another regime. For the upwind SBP methods considered here, this
leads to a reduced numerical robustness. Indeed, only the upwind SBP methods
with an interior order of accuracy two and three complete all simulations.
The higher-order methods crash for increased resolution. These results are
comparable to the robustness results we observed for the Kelvin-Helmholtz
instability in Section~\ref{sec:khi}.

\begin{table}[htbp]
\centering
  \caption{Final times of numerical simulations of the inviscid Taylor-Green
           vortex with a single element and periodic upwind SBP methods using the
           Steger-Warming splitting \cite{steger1979flux}.
           Final times less than 20 indicate that the simulation crashed.}
  \label{tab:taylor_green_periodic_upwind}
  \begin{subtable}{0.49\textwidth}
  \centering
    \caption{Mach number $\mathrm{Ma} = 0.1$.}
    \begin{tabular}{r rrrrrr}
      \toprule
      \multicolumn{1}{c}{\#nodes} & \multicolumn{6}{c}{interior order of accuracy} \\
          & \multicolumn{1}{c}{2}
          & \multicolumn{1}{c}{3}
          & \multicolumn{1}{c}{4}
          & \multicolumn{1}{c}{5}
          & \multicolumn{1}{c}{6}
          & \multicolumn{1}{c}{7} \\
      \midrule
          \num{4096} & 20.0 & 20.0 & 20.0 & 20.0 & 20.0 & 20.0 \\
         \num{32768} & 20.0 & 20.0 & 20.0 & 20.0 & 20.0 & 20.0 \\
        \num{262144} & 20.0 & 20.0 & 20.0 & 20.0 & 20.0 & 20.0 \\
      \bottomrule
    \end{tabular}
  \end{subtable}%
  \hspace{\fill}
  \begin{subtable}{0.49\textwidth}
  \centering
    \caption{Mach number $\mathrm{Ma} = 0.4$.}
    \begin{tabular}{r rrrrrr}
      \toprule
      \multicolumn{1}{c}{\#nodes} & \multicolumn{6}{c}{interior order of accuracy} \\
          & \multicolumn{1}{c}{2}
          & \multicolumn{1}{c}{3}
          & \multicolumn{1}{c}{4}
          & \multicolumn{1}{c}{5}
          & \multicolumn{1}{c}{6}
          & \multicolumn{1}{c}{7} \\
      \midrule
          \num{4096} & 20.0 & 20.0 & 20.0 & 20.0 & 20.0 & 20.0 \\
         \num{32768} & 20.0 & 20.0 & 20.0 & 20.0 & 20.0 & 20.0 \\
        \num{262144} & 20.0 & 20.0 & 20.0 & 20.0 & 20.0 & 20.0 \\
      \bottomrule
    \end{tabular}
  \end{subtable}%
\end{table}

Table~\ref{tab:taylor_green_periodic_upwind} shows results obtained by
fully periodic upwind SBP methods using only the interior coefficients
of the upwind SBP operators. As for the Kelvin-Helmholtz instability
considered before, this version is much more robust --- all of the
simulations run successfully to the final time.

\begin{figure}[htbp]
\centering
  \begin{subfigure}{0.49\textwidth}
  \centering
    \includegraphics[width=\textwidth]{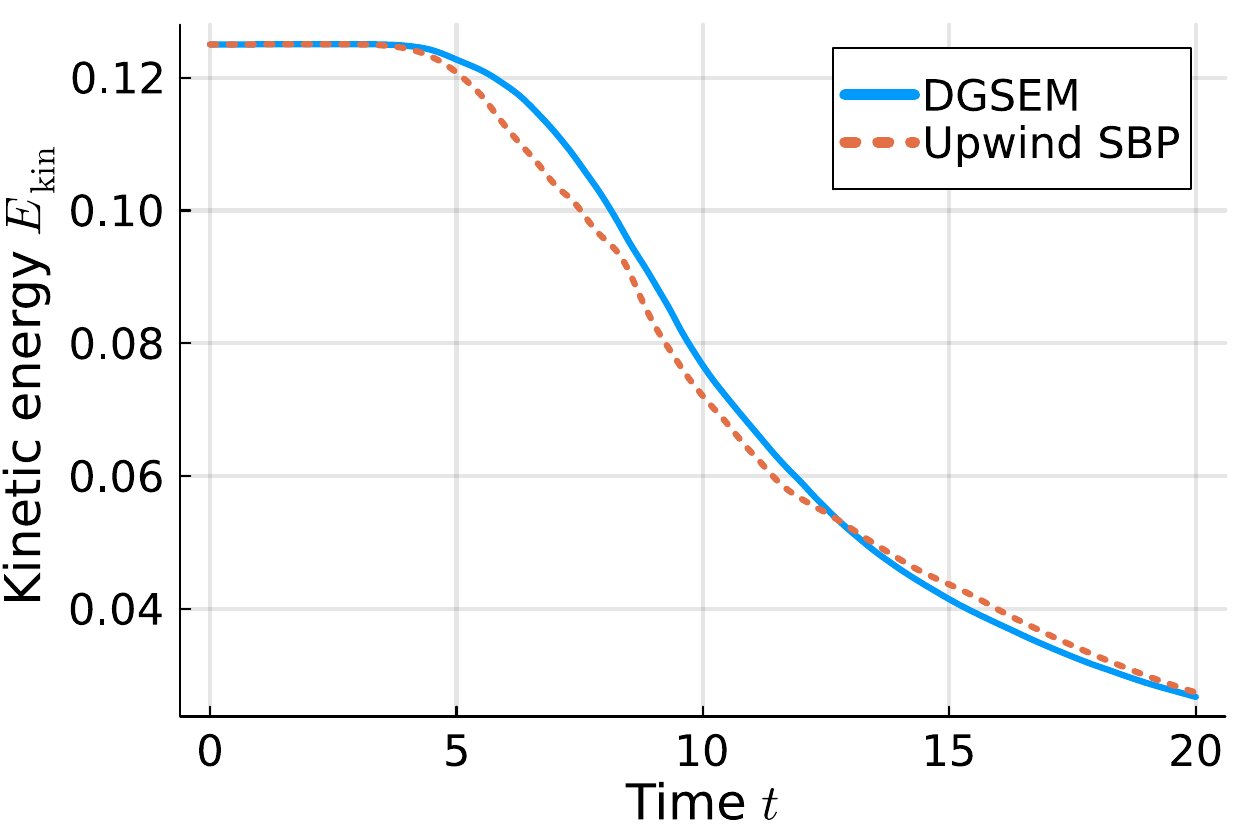}
    \caption{Discrete kinetic energy.}
  \end{subfigure}%
  \vspace{\fill}
  \begin{subfigure}{0.49\textwidth}
  \centering
    \includegraphics[width=\textwidth]{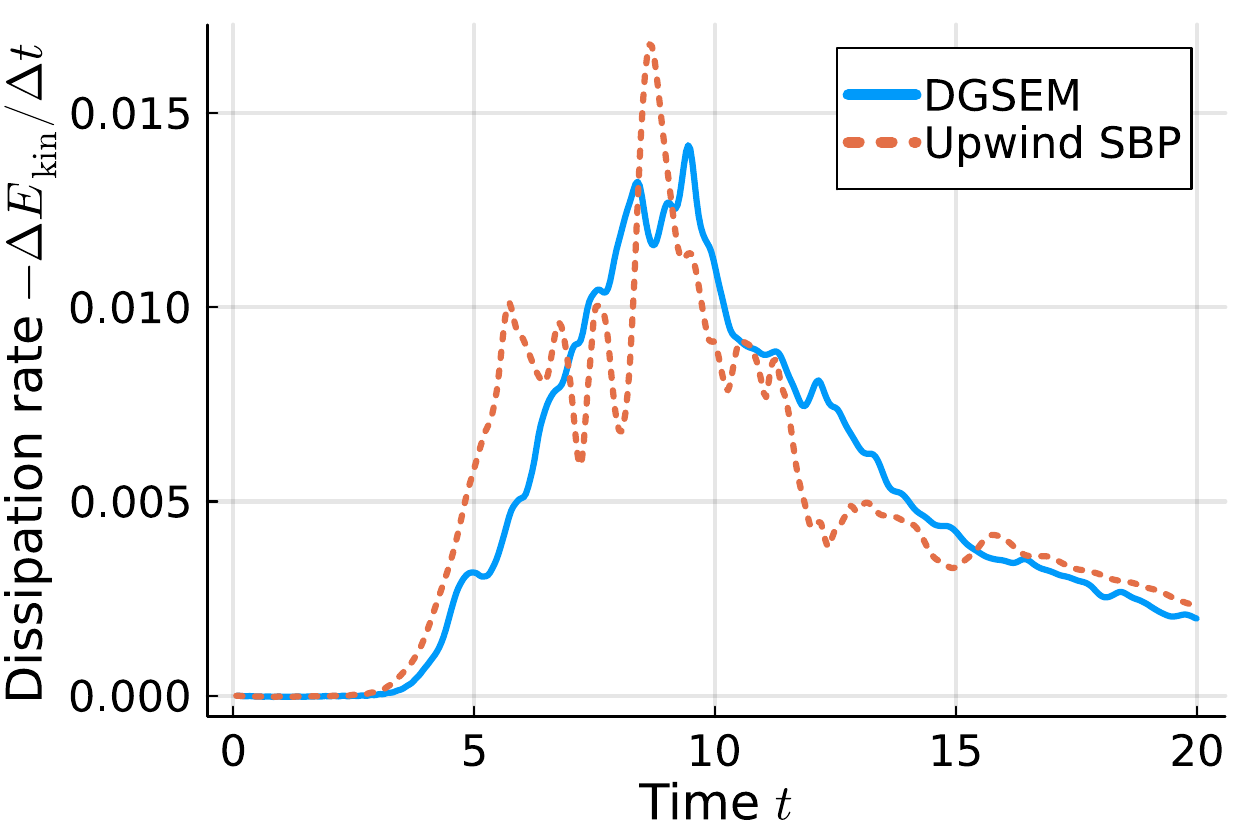}
    \caption{Discrete kinetic energy dissipation rate.}
  \end{subfigure}%
  \caption{Discrete kinetic energy and its dissipation rate for the inviscid
           Taylor-Green vortex. We compare the results of flux differencing
           DGSEM with upwind SBP methods. The DGSEM scheme uses 16 elements
           per coordinate direction, the entropy-conservative flux of Ranocha
           \cite{ranocha2018comparison,ranocha2020entropy,ranocha2021preventing}
           in the volume, and the local Lax-Friedrichs (Rusanov) flux at
           interfaces.
           The upwind SBP method uses 4 elements and 16 nodes per coordinate
           direction, the Steger-Warming splitting \cite{steger1979flux},
           and the operators of Mattsson \cite{mattsson2017diagonal}
           with an interior order of accuracy 6.
           Thus, both simulations use \SI{262144} DOFs.}
  \label{fig:tgv}
\end{figure}

Next, we follow the approach of \cite{gassner2016split} to compute the
kinetic energy dissipation rate for the Mach number $\mathrm{Ma} = 0.1$.
Specifically, we compute the discrete version of the
total kinetic energy
\begin{equation}
  E_\mathrm{kin}(t) = \int \frac{1}{2} \rho(t, x) v(t, x)^2 \dif x
\end{equation}
using the quadrature rule associated with the SBP mass matrix $M$ every
10 accepted time steps. Then, we use central finite differences to compute
the discrete kinetic energy dissipation rate $-\Delta E_\mathrm{kin} / \Delta t$
approximating $-\dif E_\mathrm{kin} / \dif t$. The results are visualized in
Figure~\ref{fig:tgv}.
The qualitative behavior of the kinetic energy and its dissipation rate are
the same for the flux differencing DGSEM method and the upwind SBP method.
The upwind SBP method tends to begin dissipating the kinetic energy earlier
than the DGSEM and shows a dissipation rate that is a bit more oscillatory.
The results of the upwind SBP method do not change if we use half the number
of elements but double the number of nodes per element (not shown in the
plots).
The results of the flux differencing DGSEM simulation match the results
of \cite{gassner2016split} for the same polynomial degree and resolution
(up to the smaller final time $T = 14$ used there).

We also measured the execution time of the upwind SBP method and the
flux differencing DGSEM used in this example on a MacBook with an Apple M2
CPU. The total time spent in the ODE right-hand side computation to simulate
the Taylor-Green vortex in the time interval $[0, 1]$ on a single thread
without any parallelism is roughly
\SI{12.34(0.01)}{s} 
for the flux differencing DGSEM and
\SI{13.73(0.11)}{s} 
for the upwind SBP method
(results of five runs, average and standard deviation,
same setup as used in Section~\ref{sec:tgv}).
Please note that this compares a highly tuned implementation of the DGSEM
using SIMD instructions as described in \cite{ranocha2023efficient} with
a first implementation of the upwind SBP methods in a research code.
Thus, we conclude that both methods are of comparable efficiency.

\section{Summary and conclusions}
\label{sec:summary}

We have discussed high-order upwind SBP methods for nonlinear
conservation laws. Introduced by Mattsson in \cite{mattsson2017diagonal},
these methods combine central-type classical SBP operators with
artificial dissipation and need a flux vector splitting for nonlinear
conservation laws. Lax-Friedrichs type splittings have been
predominantly considered in the literature
\cite{svard2005steady,mattsson2007high,mattsson2017diagonal,lundgren2020efficient,stiernstrom2021residual}.
To combine upwind SBP operators with multiple other flux vector splittings,
we have described a general way to design SATs as in discontinuous Galerkin
methods using numerical fluxes resulting from the chosen splitting
in Section~\ref{sec:formulations}.
Further, we discussed how to extend splittings other than those of Lax-Friedrichs
type into the high-order upwind SBP framework of Mattsson on unstructured curvilinear meshes.
Through this analysis we found an interplay between the dependency of
said splittings, like the van Leer-Hänel, on the metric terms and the boundary closure
accuracy of the upwind SBP operator. Only under specific conditions on the mapping,
the metric terms, and the boundary closure could the resulting method retain the important
free-stream preservation property in generalized coordinates.
We have proven the local linear/energy stability of upwind SBP methods
for Burgers' equation in Section~\ref{sec:stability}. This kind of stability
property is not the classical stability property of a numerical method
applied to a linearized problem, but a property of the linearization
thereof applied to a nonlinear problem. Since linearization and application
of a high-order method for conservation laws do not commute in general,
it is nontrivial to satisfy stability properties such as entropy stability
for the nonlinear problem and local linear/energy stability at the same
time. In particular, we are not aware of any numerical method that has
all three desirable properties
i) nonlinear entropy stability,
ii) local linear/energy stability, and
iii) high-order accuracy.
Methods based on classical SBP operators can be designed to be high-order
accurate and entropy-stable but lack local linear/energy stability
\cite{gassner2022stability}. We have complemented these results by
proving that high-order upwind SBP methods satisfy local linear/energy
stability. We have also discussed the relation to a very special case of
entropy stability. While this case is only an academic example, we hope
that it may lead the community in a way to solve the entropy/linear
stability issue.

We have applied upwind SBP methods with several flux vector splittings
in Section~\ref{sec:experiments}.
The robustness and computational efficiency of the upwind SBP
methods for nonlinear conservation laws are roughly comparable to
highly tuned flux differencing discontinuous Galerkin spectral element
methods, as demonstrated for several examples of compressible fluid flows
and under-resolved simulations.
The numerical tests demonstrated that the upwind SBP methods remained
high-order accurate on unstructured curvilinear domains and free-stream preservation
was retained provided any curved boundaries were approximated with an
appropriate polynomial order dictated by the boundary closure accuracy of a given SBP operator.
These validation tests were performed on well-resolved simulation setups.
For under-resolved simulations, we have shown that
results for a classical inviscid Taylor-Green vortex are promising,
but more challenging tests such as a Kelvin-Helmholtz instability show
that upwind SBP methods do not fix all high-order robustness issues for
shock-free flows. In particular, some robustness (positivity) issues
manifest mainly at interfaces and corners. Thus, upwind SBP methods are
roughly comparable to other modern stabilizations for high-order schemes.

\section*{Acknowledgments}

HR was supported by the Deutsche Forschungsgemeinschaft
(DFG, German Research Foundation, project numbers 513301895 and 528753982
as well as within the DFG priority program SPP~2410 with project number 526031774)
and the Daimler und Benz Stiftung (Daimler and Benz foundation,
project number 32-10/22).
ARW was funded through Vetenskapsr{\aa}det, Sweden grant
agreement 2020-03642 VR.
MSL received funding through the DFG research unit FOR~5409 "Structure-Preserving Numerical Methods
for Bulk- and Interface Coupling of Heterogeneous Models (SNuBIC)" (project number 463312734),
as well as through a DFG individual grant (project number 528753982).
PÖ was supported by the DFG within the priority research program
SPP 2410, project OE 661/5-1
(525866748) and under the personal grant 520756621 (OE 661/4-1).
JG was supported by the US DOD (ONR MURI) grant \#N00014-20-1-2595.
GG acknowledges funding through the Klaus-Tschira Stiftung via the project
``HiFiLab'' and received funding through the DFG research unit FOR 5409 ``SNUBIC''
and through the BMBF funded project ``ADAPTEX''.

Some of the computations were enabled by resources provided by
the National Academic Infrastructure for Supercomputing in Sweden (NAISS), partially 
funded by the Swedish Research Council through grant agreement no. 2022-06725.

\printbibliography

\end{document}